\documentclass[11pt]{amsart}

\usepackage{amssymb,amsfonts}
\usepackage{amsaddr}
\usepackage{amsmath}
\usepackage{amsthm}
\usepackage{enumerate}
\usepackage{thmtools}
\usepackage{datetime}
\usepackage{graphicx}
\usepackage{pdfpages}
\usepackage{bbm}
\definecolor{darkspringgreen}{rgb}{0.14, 0.7, 0.3}
\definecolor{melon}{rgb}{0.4, 0.2, 1}
\usepackage[colorlinks, citecolor = darkspringgreen, linkcolor = melon]{hyperref}


\setlength{\textwidth}{\paperwidth}
\addtolength{\textwidth}{-2.25in}
\calclayout

\setlength{\parskip}{4pt}%

\setboolean{@twoside}{false}

\newtheorem{thm}{Theorem}[section]
\newtheorem{cor}[thm]{Corollary}

\newtheorem{lem}[thm]{Lemma}

\theoremstyle{definition}
\newtheorem{defn}[thm]{Definition}

\newtheorem{exmp}[thm]{Example}

\theoremstyle{remark}
\newtheorem{rem}[thm]{Remark}

\newtheorem{obs}[thm]{Observation}

\makeatletter
\let\c@equation\c@thm
\makeatother
\numberwithin{equation}{section}

\makeatletter
\newcommand*\bigcdot{\mathpalette\bigcdot@{.5}}
\newcommand*\bigcdot@[2]{\mathbin{\vcenter{\hbox{\scalebox{#2}{$\m@th#1\bullet$}}}}}
\makeatother

\makeatletter
\def\subsection{\@startsection{subsection}{3}%
  \z@{.5\linespacing\@plus.7\linespacing}{.1\linespacing}%
  {\bfseries}}
\makeatother

%
%

\makeatletter
\def\@tocline#1#2#3#4#5#6#7{\relax
  \ifnum #1>\c@tocdepth 
  \else
    \par \addpenalty\@secpenalty\addvspace{#2}%
    \begingroup \hyphenpenalty\@M
    \@ifempty{#4}{%
      \@tempdima\csname r@tocindent\number#1\endcsname\relax
    }{%
      \@tempdima#4\relax
    }%
    \parindent\z@ \leftskip#3\relax \advance\leftskip\@tempdima\relax
    \rightskip\@pnumwidth plus4em \parfillskip-\@pnumwidth
    #5\leavevmode\hskip-\@tempdima
      \ifcase #1
       \or\or \hskip 1em \or \hskip 2em \else \hskip 3em \fi%
      #6\nobreak\relax
    \hfill\hbox to\@pnumwidth{\@tocpagenum{#7}}\par
    \nobreak
    \endgroup
  \fi}
\makeatother

\newcommand{\N}{\mathbb{N}}

\newcommand{\Z}{\mathbb{Z}}
\newcommand{\calA}{\mathcal{A}}
\newcommand{\calC}{\mathcal{C}}
\newcommand{\calD}{\mathcal{D}}
\newcommand{\calF}{\mathcal{F}}
\newcommand{\calP}{\mathcal{P}}
\newcommand{\Th}{\text{th}}
\DeclareMathOperator{\wt}{wt}

\DeclareMathOperator{\sh}{sh}
\DeclareMathOperator{\Trop}{Trop}

\title{On Dyck Path Expansion Formulas for Rank 2 Cluster Variables}
\author{Amanda Burcroff}

\begin{document}

\begin{abstract}
    In this paper, we simplify and generalize formulas for the expansion of rank $2$ cluster variables.  In particular, we prove an equivalent, but simpler, description of the colored Dyck subpaths framework introduced by Lee and Schiffler.  We then prove the conjectured bijectivity of a map constructed by Feiyang Lin between collections of colored Dyck subpaths and compatible pairs, objects introduced by Lee, Li, and Zelevinsky to study the greedy basis.  We use this bijection along with Rupel's expansion formula for quantum greedy basis elements, which sums over compatible pairs, to provide a quantum generalization of Lee and Schiffler's colored Dyck subpaths formula.
\end{abstract}

\maketitle

\tableofcontents

\section{Introduction}

The theory of \emph{cluster algebras}, introduced twenty years ago by Fomin and Zelevinsky \cite{FZ},  gives us a combinatorial framework for understanding the previously opaque nature of certain algebras.  Each cluster algebra is generated by its \emph{cluster variables}, which can be obtained via the recursive process of \emph{mutation}.  The \emph{Laurent phenomenon} says that each cluster variable in a rank-$n$ cluster algebra can be expressed as a Laurent polynomial in the $n$ \emph{initial} cluster variables.  While in general finding explicit formulas for the Laurent expansions of arbitrary cluster variables is difficult, there has been significant progress in understanding the expansions in low-rank cluster algebras.  In this work, we attempt to unify and simplify some of the existing expansion formulas for rank-$2$ cluster variables and their quantum generalizations.  


In 2011, Lee and Schiffler provided the first combinatorial formula for the Laurent expansion of arbitrary skew-symmetric rank-$2$ cluster variables \cite[Theorem 9]{LS}.  They expressed the coefficients as sums over certain collections of non-overlapping \emph{colored subpaths} of a \emph{maximal Dyck path}.  This established the positivity of the Laurent expansion in skew-symmetric rank $2$ cluster algebras.  Lee and Schiffler \cite[Theorem 11]{LS2} and Rupel \cite[Theorem 6]{Rup} then generalized this formula (in the skew-symmetric and skew-symmetrizable cases, respectively) to the non-commutative rank-2 setting, giving each collection a weight expressed as an ordered product of two non-commuting initial cluster variables.  In 2012, Lee, Li, and Zelevinsky \cite{LLZ} defined the \emph{greedy basis} for rank-$2$ cluster algebras, which includes the cluster variables.  They provided a combinatorial formula for the Laurent expansion of each greedy basis elements as a sum over \emph{compatible pairs}, certain collections of edges of a maximal Dyck path \cite[Theorem 11]{LLZ}. Rupel later gave a non-commutative analogue of this formula, which specializes to a formula for the coefficients in the \emph{quantum rank-$2$ cluster algebra} setting \cite[Corollary 5.4]{Rup2}.  In particular, each compatible pair is weighted by a corresponding power of a quantum parameter $q$, where the exponent is computed as a sum over all pairs of edges in the maximal Dyck path.  Rupel \cite[Theorem 1.2]{Rup3} has also provided a quantum analogue to the Caldero-Chapoton expansion formula \cite{CC} for rank-$2$ cluster variables expressed as a sum over indecomposable valued-quiver representations.

To summarize, there are two different combinatorial formulas for the Laurent expansion of skew-symmetric rank-$2$ cluster variables: one in terms of collections of colored Dyck subpaths \cite{LS} and the other in terms of compatible pairs \cite{LLZ}.  The combinatorics of collections of colored Dyck subpaths and compatible pairs are somewhat similar, suggesting that there might be a nice correspondence between them.  A correspondence was known to Lee, Li, and Zelevinsky \cite{LLZ} and they suggested in their 2012 paper that they planned to provide details in the future, but this has not yet appeared in the literature.  In 2021, Feiyang Lin constructed a map between a superset of the collections of colored Dyck subpaths and compatible pairs and conjectured that the map restricts to a bijection in \cite[Conjecture 3]{Lin}.  Lin made partial progress toward proving this, reducing the conjecture to a technical statement \cite[Conjecture 4]{Lin}.

In this work, we start by providing a simplification of Lee-Schiffler's formula for rank-$2$ cluster variables in terms of colored Dyck subpath conditions.  We then use our simpler formula to prove Lin's conjectures \cite[Conjectures 3 \& 4]{Lin} that the map constructed between collections of colored Dyck subpaths and compatible pairs is indeed a bijection.  (Our methods do not rely on the technical reformulation presented by Lin.)  This bijection gives an efficient method for generating all compatible pairs in the cluster variable case.  We then use the bijection along with Rupel's quantum weighting of compatible pairs \cite{Rup2} to provide a quantum version of Lee and Schiffer's rank-$2$ expansion formula for cluster variables.  This new formula has the advantage of requiring less computation than that in \cite[Corollary 5.4]{Rup2} and explicitly calculating the coefficients in the quantum case, rather than expressing each term as an ordered product as in \cite{Rup}.  It is also more elementary than the expansion formula in \cite{Rup3}, which is based on the theory of valued quiver representations.  

The paper is organized as follows.  In \autoref{sec: results}, we give an overview of the results.  \autoref{sec: prelims} contains some preliminaries concerning maximal Dyck paths.   The proof of the simplification of Lee and Schiffler's \cite{LS} colored Dyck subpath conditions is the focus of \autoref{sec: simplification}.  \autoref{sec: bijection} contains the proof of Lin's conjectures \cite[Conjectures 3 \& 4]{Lin}, establishing a bijection between collections of colored Dyck subpaths from the Lee-Schiffler \cite{LS} setting and compatible pairs from the Lee-Li-Zelevinsky \cite{LLZ} setting.  This bijection is applied to Rupel's \cite{Rup2} quantum weighting on compatible pairs to yield a quantum analogue of Lee-Schiffler's \cite{LS} expansion formula in \autoref{sec: quantum}.  We conclude with a discussion of further directions in \autoref{sec: further directions}.

\section{Statement of Results}\label{sec: results}
For a positive integer $r$ and variables $X_1,X_2$, we consider the sequence $\{X_n\}_{n \in \Z}$ of expressions recursively defined by 
\begin{equation}\label{eqn: cluster recurrence}
    X_{n+1} = \frac{X_n^r + 1}{X_{n-1}}\,.
\end{equation}
This sequence is precisely the set of variables of the rank-$2$ cluster algebra $\calA(r,r)$ associated to the $r$-Kronecker quiver, which consists of two vertices with $r$ arrows between them.  The sequence is periodic when $r = 1$, and otherwise all $X_n$ are distinct.  For background on cluster algebras, see \cite{FWZ}.

\begin{defn}\label{defn: maximal Dyck path}
The \emph{maximal Dyck path} $\calP(a,b)$ is the path proceeding by unit north and east steps from $(0,0)$ to $(a,b)$ that is closest to the line segment between $(0,0)$ and $(a,b)$ without crossing strictly above it.  For two vertices $u,w$ along such a path, let $s(u,w)$ denote the slope of the line segment between them. 
\end{defn}

 Let $\{c_n\}_{n=1}^\infty$ be the sequence of non-negative integers defined recursively by:
\begin{equation}\label{eqn: cn recurrence}
    c_{1} = 0, c_{2} = 1, \text{ and } c_{n} = rc_{n-1} - c_{n-2} \text{ for } r \geq 2\,.
\end{equation}
Let $\calC_n = \calP(c_{n-1},c_{n-2})$ and $\calD_n = \calP(c_{n-1}-c_{n-2},c_{n-2})$.  We label the leftmost vertex at each height of $\calD_n$ by $v_i$ for $i = 0,\dots,c_{n-2}$, with the subindex increasing from south to north; such vertices are called \emph{northwest corners}.  Let $\gamma(i,k)$ be the subpath spanning from $v_i$ to $v_k$ for any $0 \leq i < k \leq b$.  An example of the maximal Dyck path $\calD_5 = \calP(5,3)$ is shown in \autoref{fig: intro exmp}.

\begin{thm}\label{thm: path slope result}
    Let $t(i)$ be the minimum integer greater than $i$ such that $s(v_i,v_{t(i)}) > s$.  Then we have $t(i) - i = c_m - wc_{m-1}$ for a unique choice of $2 \leq w \leq r-1$ and $3 \leq m \leq n-2$.  
\end{thm}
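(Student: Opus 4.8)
The plan is to reduce the statement to a fact about first-return times of a circle rotation and then to exploit the self-similarity of that rotation under the recursion $c_m = rc_{m-1}-c_{m-2}$.

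Set $A = c_{n-1}-c_{n-2}$ and $B = c_{n-2}$, so that $\mathcal{D}_n = \mathcal{P}(A,B)$ and $s = B/A$. First I would note that the $j$-th northwest corner is $v_j = (\lceil jA/B\rceil,\, j)$, so that for $1 \le i < k \le B$ one has $s(v_i,v_k) > s$ if and only if $\phi(k) < \phi(i)$, where $\phi(j) := B\lceil jA/B\rceil - Aj \in \{0,1,\dots,B-1\}$. Since $A = (r-1)c_{n-2} - c_{n-3}$, the sequence $\phi$ is an orbit of the rotation $x \mapsto (x + c_{n-3}) \bmod c_{n-2}$, so $t(i)-i$ depends only on $\rho := \phi(i)$ and equals
\[
 g_n(\rho) \ :=\ \min\bigl\{\delta \ge 1 \ :\ (\rho+\delta\, c_{n-3}) \bmod c_{n-2} \ <\ \rho\bigr\},
\]
which is finite because the orbit runs through every residue and hence hits $0$; this also shows $t(i) \le c_{n-2}$. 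So the theorem becomes: for every $\rho \in \{1,\dots,c_{n-2}-1\}$, $g_n(\rho)$ lies in the prescribed set. I would also carry along the auxiliary quantity $d_n(\rho)$, the number of "descent" steps (those subtracting $c_{n-2}-c_{n-3}$ rather than adding $c_{n-3}$) taken before the first return below $\rho$.

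Next I would split $\{1,\dots,c_{n-2}-1\}$ into two ranges of $\rho$. Using $c_{n-2}/c_{n-3} \in (r-1,r)$, the \emph{outer range} $\rho \in [\,c_{n-3}-c_{n-4},\, c_{n-2})$ is handled directly: from such a $\rho$ the orbit makes at most $r-2$ ascents and then a single descent landing below $\rho$, so that $\bigl(g_n(\rho),\, d_n(\rho)\bigr)$ already has the form $\bigl(c_m - wc_{m-1},\ c_{m-1} - wc_{m-2}\bigr)$ with $m \in \{3,4\}$ and $w \in \{2,\dots,r-1\}$ (the value $g_n(\rho) = r-1$ using $m = 4$). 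For the \emph{inner range} $\rho \in [1,\, c_{n-3}-c_{n-4})$ the crucial point is a renormalization: the first-return map of the rotation to the subinterval $[0,c_{n-3})$ is exactly the rotation $x \mapsto (x + c_{n-4}) \bmod c_{n-3}$ attached to $\mathcal{D}_{n-1}$, and each of its steps costs $r$ ambient steps if it is an ascent, $r-1$ ambient steps if it is a descent, and in either case performs exactly one ambient descent. Since $\rho < c_{n-3}$, the ambient orbit can drop below $\rho$ only at a return time to $[0,c_{n-3})$, so summing costs gives the recursion
\[
 g_n(\rho) = r\, g_{n-1}(\rho) - d_{n-1}(\rho), \qquad d_n(\rho) = g_{n-1}(\rho).
\]

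Finally I would induct on $n$, the base case $n = 5$ (where the inner range is empty) being the outer-range computation. In the inductive step, if $g_{n-1}(\rho) = c_m - wc_{m-1}$ and $d_{n-1}(\rho) = c_{m-1} - wc_{m-2}$ with $w \in \{2,\dots,r-1\}$, then the recursion above gives $g_n(\rho) = c_{m+1} - wc_m$ and $d_n(\rho) = c_m - wc_{m-1}$, i.e.\ the same $w$ with $m$ increased by one; so both quantities retain the required form and $m$ stays within the appropriate bound coming from the depth of the recursion. Uniqueness of $(m,w)$ then follows from a size estimate: $c_m - wc_{m-1}$ strictly decreases in $w$, and its maximum value $c_m - 2c_{m-1}$ is still smaller than $c_{m+1} - (r-1)c_m = c_m - c_{m-1}$, so the value sets attained for distinct $m$ are disjoint. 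The step I expect to be the main obstacle is the renormalization in the inner range --- identifying the first-return map with the $\mathcal{D}_{n-1}$-rotation and, above all, verifying that every one of its steps, ascent or descent, contributes exactly one ambient descent, which is precisely what makes the two recursions mirror $c_m = rc_{m-1}-c_{m-2}$ --- together with careful bookkeeping of the small-$n$ and boundary cases, notably $\rho = 1$, for which $t(i) = c_{n-2}$.
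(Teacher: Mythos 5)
Your proposal is essentially the paper's own argument recast in dynamical language: your rotation coordinate $\phi$ is the paper's label function $\pi_n$ (\autoref{rem: pi n}), your first-return renormalization is its vertex-to-corner correspondence $\mu$ (\autoref{lem: corner recursion}, \autoref{cor: vertex form step}), and your two-variable recursion $g_n = r\,g_{n-1} - d_{n-1}$, $d_n = g_{n-1}$ with the outer-range computation as base case is exactly the paper's induction sending $c_m - wc_{m-1} \mapsto c_{m+1} - wc_m$ with short returns of length at most $r-1$ at the bottom (\autoref{lem: mw corners}), your descent count $d_n$ being precisely the vertical-step bookkeeping the paper calls ``straightforward to check.'' The one point you leave vague, the bound on $m$, is worth noting: your recursion depth yields $m \le n-1$ rather than the stated $m \le n-2$, but the stated bound is an off-by-one already present in the paper's statement and proof (for $r=3$, $n=5$, $i=1$ one has $t(i)-i = 2 = c_4 - 2c_3$, forcing $m = 4 > n-2$), so this is not a defect of your approach.
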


This result allows us to simplify the expansion formula of Lee and Schiffler, which is briefly described below. The next few definitions and \autoref{cor: LS modified expansion} emulate the results of Lee and Schiffler, except for slight modifications due to the simplified coloring conditions above. 

\begin{defn}[cf. \autoref{def: LS colors}]\label{def: brown path}
For any $0 \leq i < k \leq c_{n-2}$, let $\gamma(i,k)$ be the subpath of $\calD_n$ from $v_i$ to $v_k$, which is assigned a color as follows: 
\begin{enumerate}
    \item[(1)] If $s(v_i,v_t) \leq s$ for all $t$ such that $i < t \leq k$, then $\gamma(i,k)$ is {\color{blue} blue}.
    \item[($2^*$)] Otherwise, let $m,w$ be chosen with respect to $i$ as in \autoref{thm: path slope result}.  Then we say $\gamma(i,k)$ is {\color{brown} $(m,w)$-brown}.\footnote{The color brown was chosen because it is the combination of Lee and Schiffler's red and green cases.}
\end{enumerate}
\end{defn}

A \emph{subpath} of $\calD_n$ is a path of the form $\gamma(i,k)$ or a single edge $\alpha_i$.  We denote the set of such subpaths by $\calP'(\calD_n)$. We define the set $\calF'(\calD_n)$ to contain any collection of subpaths in $\calP'(\calD_n)$ satisfying that no two subpaths share an edge, two subpaths share a vertex only if at least one of them is a single edge, and at least one of the $c_{m-1}-2c_{m-2}$ edges preceding each $(m,w)$-brown subpath is contained in another subpath.   Given $\beta \in \calF'(\calD_n)$, the quantity $|\beta|_1$ is defined additively over the subpaths, taking value $k-i$ on $\gamma(i,k)$ and value $0$ on single edges.  The quantity $|\beta|_2$ is the total number of edges in $\beta$. This yields the following expansion formula for the cluster variables.

\begin{cor}[{\text{analogue of }\cite[Theorem 9]{LS}}]\label{cor: LS modified expansion}
 Consider the cluster algebra $\calA(r,r)$ with cluster variables $X_i$ for $i \in \Z$.  For $n \geq 4$, we have
 $$X_n = X_1^{-c_{n-1}}X_2^{-c_{n-2}} \sum_{\beta \in \calF'(\calD_n)} X_1^{r|\beta|_1}X_2^{r\left(c_{n-1} - |\beta|_2\right)}$$
 and 
 $$X_{3-n} = X_2^{-c_{n-1}}X_1^{-c_{n-2}} \sum_{\beta \in \calF'(\calD_n)} X_2^{r|\beta|_1}X_1^{r\left(c_{n-1} - |\beta|_2\right)}\,.$$
\end{cor}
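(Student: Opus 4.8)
The plan is to deduce both formulas from Lee and Schiffler's expansion \cite[Theorem 9]{LS}, specialized to $\calA(r,r)$, by identifying $\calF'(\calD_n)$ together with the statistics $|\beta|_1, |\beta|_2$ with Lee and Schiffler's family of non-overlapping colored subpath collections and its weighting. They color the northwest corners of their maximal Dyck path blue, red, or green according to the behavior of the slopes $s(v_i, v_t)$, and weight each admissible collection by a monomial in $X_1, X_2$ whose exponents record the total vertical span of its non-single-edge subpaths and the total number of edges it uses. Our \autoref{def: brown path} retains the blue case verbatim and fuses the red and green cases into the single $(m,w)$-brown case, where $m$ and $w$ are the parameters produced by \autoref{thm: path slope result}. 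The content of \autoref{sec: simplification} is precisely that this recoloring is faithful and that the number $c_{m-1} - 2c_{m-2}$ of edges that must be ``blocked'' in front of an $(m,w)$-brown subpath is exactly the count appearing in Lee and Schiffler's red/green conditions.

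Granting that, I would argue as follows. First, the passage from Lee and Schiffler's maximal Dyck path to $\calD_n = \calP(c_{n-1}-c_{n-2}, c_{n-2})$ only contracts horizontal edges lying between consecutive northwest corners, which enter none of the overlap constraints; hence collections of subpaths, together with their sets of northwest corners, correspond bijectively, with $|\beta|_1$ matching the exponent recording vertical data and $c_{n-1} - |\beta|_2$ matching the horizontal exponent once all $c_{n-1}$ horizontal edges of the ambient path are counted. Combining this with the faithful recoloring from \autoref{sec: simplification} identifies $\sum_{\beta \in \calF'(\calD_n)} X_1^{r|\beta|_1} X_2^{r(c_{n-1}-|\beta|_2)}$ with Lee and Schiffler's sum; multiplying by $X_1^{-c_{n-1}} X_2^{-c_{n-2}}$ and comparing with \cite[Theorem 9]{LS} gives the formula for $X_n$ when $n \geq 4$.

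The formula for $X_{3-n}$ requires no further combinatorics. Setting $Y_n = X_{3-n}$, the recurrence \eqref{eqn: cluster recurrence} gives $Y_{n+1}Y_{n-1} = X_{2-n}X_{4-n} = X_{3-n}^r + 1 = Y_n^r + 1$, so $\{Y_n\}$ obeys the same recurrence with $Y_1 = X_2$ and $Y_2 = X_1$. Since $c_{n-1}, c_{n-2}$ and $\calD_n$ depend only on $n$ and $r$, applying the formula just proved to $\{Y_n\}$ and substituting $Y_1 = X_2$, $Y_2 = X_1$ produces precisely the stated expansion of $X_{3-n}$.

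The main obstacle is the faithfulness of the recoloring, i.e.\ the work deferred to \autoref{sec: simplification}: one must determine exactly which edges precede a given brown subpath $\gamma(i,k)$ and verify that the identity $t(i) - i = c_m - wc_{m-1}$ of \autoref{thm: path slope result} collapses Lee and Schiffler's two-case analysis into the uniform $(m,w)$-brown rule with exactly $c_{m-1} - 2c_{m-2}$ blocked edges. Everything else is bookkeeping with the two weight statistics and the $n \mapsto 3-n$ symmetry used above.
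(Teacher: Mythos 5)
Your overall route is the paper's: \autoref{cor: LS modified expansion} is deduced by citing Lee--Schiffler's \cite[Theorem 9]{LS} (recalled as \autoref{thm: LS expansion}) and transporting it through the recoloring correspondence, whose faithfulness rests on \autoref{thm: path slope result} and is the content of the map $\chi$ of \autoref{defn: chi} and \autoref{lem: simplified paths}; you correctly isolate that recoloring as the only real work, and your $n \mapsto 3-n$ symmetry argument for $X_{3-n}$ is fine (the paper simply inherits both formulas from \autoref{thm: LS expansion}). One correction, though: there is no change of ambient Dyck path to account for. Lee and Schiffler's sum is already over subpath collections of $\calD_n = \calP(c_{n-1}-c_{n-2},c_{n-2})$, which has $c_{n-1}$ edges in total rather than $c_{n-1}$ horizontal edges, so your paragraph about ``contracting horizontal edges between consecutive northwest corners'' justifies a step that does not exist, and as written that justification would not stand on its own (the genuine comparison between $\calD_n$ and $\calC_n$ only enters later, via $\Phi$ in \autoref{sec: bijection}). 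Also be aware that the recoloring is slightly more than fusing the green and red labels into brown: red subpaths in \autoref{def: LS colors} start one edge below $v_i$, and $\chi$ splits each such path into that single edge plus a brown path; checking that this splitting, together with the blocking condition on the preceding edges, preserves $|\beta|_1$ and $|\beta|_2$ and the membership conditions is precisely what \autoref{lem: simplified paths} does, and is the step your sketch defers.
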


A generalization of the above expansion result to the case of skew-symmetric rank-$2$ cluster algebras with coefficients is presented at the end of \autoref{subsec: simplification proof} (see \autoref{cor: LS modified expansion coeffs}).

\begin{exmp}\label{exmp: beta intro}
    The collection of colored subpaths $\beta = \{\gamma(0,1),\alpha_6,\gamma(2,3)\}$ is in $\calF'(\calD_5)$.  The subpath $\gamma(0,1)$ is blue, while the subpath $\gamma(2,3)$ is $(3,2)$-brown.  This collection is depicted in \autoref{fig: intro exmp} for $r = 3$, where the single edge $\alpha_6$ is represented by an orange edge. Note that in this case we have $|\beta|_1 = 2$ and $|\beta|_2 = 6$.
\end{exmp}

We now discuss the bijection between colored Dyck subpaths of $\calD_n$ and compatible pairs in $\calC_n$.  Given two vertices $u,w$ in a maximal Dyck path $\calP(a,b)$, let $\overrightarrow{uw}$ denote the subpath proceeding east from $u$ to $w$, continuing cyclically around $\calP(a,b)$ if $u$ is to the east of $w$.  Let $|uw|_1$ (resp., $|uw|_2$) denote the number of horizontal (resp., vertical) edges of $\overrightarrow{uw}$.  Given a set of horizontal edges $S_1$ and vertical edges $S_2$ in $\calP(a,b)$, the pair $(S_1,S_2)$ is \emph{compatible} if, for every edge in $S_1$ with left vertex $u$ and every edge $S_2$ with top vertex $w$, there exists a lattice point $t \neq u,w$ in the subpath $\overrightarrow{uw}$ such that 
$$|tw|_1 = r|\overrightarrow{tw} \cap S_2|_2 \text{ or } |ut|_2 = r|\overrightarrow{ut} \cap S_1|_1\,.$$

Let the horizontal (resp., vertical) edges of $\calC_n$ be labeled by $\eta_i$ (resp., $\nu_i$), increasing to the east (resp., north).  Lin defined the following map $\Phi$ from $\calF'(\calD_n)$ to pairs $(S_1,S_2)$ in $\calC_n$.  Note that while we define $\Phi$ as a map on blue/brown colored subpaths, it was originally defined for Lee-Schiffler's blue/green/red colored subpaths, and the two definitions are essentially identical.

\begin{figure}
    \centering
    \includegraphics[width = 6in]{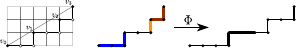}
    \caption{The leftmost image shows the maximal Dyck path $\calD_5$ for $r = 3$, along with the corresponding $5\times 3$ grid and main diagonal.  The northwest corners are labeled and depicted as filled vertices.  The center image is a collection $\beta$ of colored Dyck subpaths in $\calF'(\calD_5)$, and the rightmost image is the compatible pair on $\calC_5$ that $\beta$ maps to under $\Phi$, where an edge is thickened whenever it is included in the compatible pair.}
    \label{fig: intro exmp}
\end{figure}

\begin{defn}[\cite{Lin}]\label{defn: Phi}
Given $\beta \in \calF'(\calD_n)$, let $\Phi(\beta) = (\Phi_1(\beta),\Phi_2(\beta))$, where
\begin{align*}
\Phi_1(\beta) &= \{\eta_s : \alpha_s \text{ is not a part of any subpath of }\beta\}\,,\\
\Phi_2(\beta) &= \{\nu_s : \gamma(i,k) \in \beta \text{ for some } i < s \leq k\}\,.
\end{align*}
\end{defn}

\begin{exmp}
The compatible pair $\left(\{\eta_4,\eta_5\}, \{\nu_1,\nu_3\}\right)$ obtained by applying $\Phi$ to the collection of subpaths $\beta \in \calF'(\calD_5)$ from \autoref{exmp: beta intro} is shown in \autoref{fig: intro exmp}.
\end{exmp}

\begin{thm}\label{thm: Phi bijectivity}
The map $\Phi$ is a bijection between collections of colored subpaths in $\calF'(\calD_n)$ and compatible pairs in $\calC_n$.
\end{thm}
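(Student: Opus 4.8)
\textbf{Proof proposal for \autoref{thm: Phi bijectivity}.}

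The plan is to establish bijectivity by exhibiting an explicit inverse and checking both directions, which splits naturally into three tasks: (a) verifying that $\Phi(\beta)$ is actually a compatible pair for every $\beta \in \calF'(\calD_n)$; (b) constructing a candidate inverse map $\Psi$ from compatible pairs in $\calC_n$ to collections of subpaths and showing $\Psi$ lands in $\calF'(\calD_n)$; (c) checking $\Psi \circ \Phi = \mathrm{id}$ and $\Phi \circ \Psi = \mathrm{id}$. The key structural observation to exploit is that the northwest corners $v_i$ of $\calD_n = \calP(c_{n-1}-c_{n-2}, c_{n-2})$ correspond bijectively to the vertical edges $\nu_i$ of $\calC_n = \calP(c_{n-1}, c_{n-2})$ (both indexed $0,\dots,c_{n-2}$ for corners, $1,\dots,c_{n-2}$ for vertical edges), and the horizontal edges $\alpha_s$ of $\calD_n$ correspond to a designated subset of horizontal edges of $\calC_n$; passing from $\calD_n$ to $\calC_n$ amounts to inserting, below each northwest corner $v_i$, a block of $c_{n-2}$-many (more precisely, the appropriate number of) horizontal edges so that the slope becomes $s = c_{n-2}/c_{n-1}$ instead of $c_{n-2}/(c_{n-1}-c_{n-2})$. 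Under this identification, the colored-subpath data of $\beta$ and the edge data of $\Phi(\beta)$ encode the same information, and the compatibility condition should translate into the coloring conditions (the blue condition and the ``$c_{m-1}-2c_{m-2}$ preceding edges'' condition) via \autoref{thm: path slope result}.

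For task (a), given an edge $\eta_s \in \Phi_1(\beta)$ with left vertex $u$ and an edge $\nu_{s'} \in \Phi_2(\beta)$ with top vertex $w$, I would case on the relative position of $u$ and $w$ along $\overrightarrow{uw}$. Since $\nu_{s'} \in \Phi_2(\beta)$, there is a subpath $\gamma(i,k) \in \beta$ with $i < s' \le k$; since $\eta_s \in \Phi_1(\beta)$, the edge $\alpha_s$ is uncovered. The required lattice point $t$ should be taken to be (the $\calC_n$-image of) $v_i$ or $v_k$, or a northwest corner forced by the brown condition — precisely here \autoref{cor: LS modified expansion}'s defining property of $\calF'(\calD_n)$ (the ``at least one of the $c_{m-1}-2c_{m-2}$ preceding edges is covered'' clause) is what guarantees the needed $t$ exists when the subpath is brown, and the blue condition (all intermediate slopes $\le s$) is what makes the slope equation $|tw|_1 = r|\overrightarrow{tw}\cap S_2|_2$ hold when the subpath is blue. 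The translation of ``slope $> s$ at $v_i$'' into the numeric identity $t(i)-i = c_m - wc_{m-1}$ from \autoref{thm: path slope result} is what lets me compute $r|\overrightarrow{tw}\cap S_2|_2$ exactly.

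For task (b), I would define $\Psi(S_1,S_2)$ as follows: the uncovered single edges of $\beta$ are exactly the $\alpha_s$ with $\eta_s \notin S_1$; and the maximal runs of consecutive $\nu$'s in $S_2$, say $\nu_{i+1},\dots,\nu_k$, should be grouped — possibly subdivided at forced northwest corners — into subpaths $\gamma(i,k)$, with the leftover horizontal edges of $\calC_n$ not accounted for becoming single edges $\alpha_s \in \beta$. The subtlety is that a single run of vertical edges in $S_2$ need not correspond to a single $\gamma(i,k)$: the compatibility condition may force a split, and recovering exactly where to split (and checking the result satisfies the brown condition in $\calF'(\calD_n)$) is the crux. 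I expect \textbf{task (b) — showing $\Psi$ is well-defined and lands in $\calF'(\calD_n)$, and in particular correctly reconstructing the subdivision of vertical runs into colored subpaths — to be the main obstacle}, since this is precisely the content of Lin's \cite[Conjecture 4]{Lin} that resisted a direct approach; the new leverage is the simplified coloring of \autoref{def: brown path} together with the closed form in \autoref{thm: path slope result}, which makes the combinatorics of ``where the next slope-$>s$ event occurs'' completely explicit and hence the reconstruction algorithm deterministic. Once $\Psi$ is shown to be well-defined, the two composition identities in task (c) should follow by unwinding the definitions: $\Phi$ and $\Psi$ are manifestly inverse on the single-edge/horizontal part, and on the vertical part the grouping-then-splitting round trip is the identity by construction.
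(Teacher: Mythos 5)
Your plan is not carried to completion at the step you yourself identify as the crux. Task (b) --- defining an explicit inverse $\Psi$, showing that every compatible pair on $\calC_n$ decomposes uniquely into images of colored subpaths, and verifying the brown ``preceding edges'' condition is recovered --- is exactly the content of Lin's Conjecture~4 that resisted a direct attack, and your proposal offers no actual argument for it beyond the hope that \autoref{thm: path slope result} makes the splitting of vertical runs ``deterministic.'' In particular you give no criterion for where a maximal run of edges in $S_2$ must be subdivided, no proof that the resulting pieces are legal blue/brown subpaths, and no proof that the compatibility inequality forces the covered-preceding-edge condition rather than merely being consistent with it. As written, surjectivity (and hence bijectivity) is not established. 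Task (a) is also only gestured at: choosing $t$ to be the image of $v_i$, $v_k$, or ``a forced corner'' does not by itself verify the identity $|tw|_1 = r|\overrightarrow{tw}\cap S_2|_2$ or $|ut|_2 = r|\overrightarrow{ut}\cap S_1|_1$; the actual verification requires controlling the shape and size of the shadows of the vertical edges coming from an $(m,w)$-brown path (in the paper this is done via the Christoffel-word morphism $\lambda$, an atomic decomposition of subpaths, and an insertion lemma for compatible pairs), and it is precisely this computation that shows compatibility holds if and only if at least one of the $c_{m-1}-wc_{m-2}$ preceding edges is covered.

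The paper avoids your task (b) entirely, and you could too: once the forward direction is proved (every $\beta\in\calF'(\calD_n)$ maps to a compatible pair, weight-preservingly), injectivity of $\Phi$ is already available from Lin, and surjectivity follows from a counting argument --- the Lee--Schiffler expansion over $\calF'(\calD_n)$ and the Lee--Li--Zelevinsky expansion over compatible pairs compute the same cluster variable, so specializing $X_1=X_2=1$ shows the two sets are equinumerous, and an injection between finite equinumerous sets is a bijection. If you insist on an explicit inverse you are taking on strictly more than the theorem requires, and you would need to supply the reconstruction argument you currently only promise; otherwise, redirect your effort into making task (a) rigorous (shadow sizes for atomic brown paths, the blue case as a prefix of a brown path, and stability of compatibility under concatenation/insertion of disjointly-shadowed pieces) and then invoke the counting argument.
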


Switching to the quantum setting, we will now work inside the quantum torus\\ $\mathcal{T} := \Z[q^{\pm 1}]\langle Z_1^{\pm 1}, Z_2^{\pm 1} : Z_1Z_2 = q^2Z_2Z_1\rangle$. The quantum rank-$2$ $r$-Kronecker cluster algebra $\calA_q(r,r)$ is the $\Z[q^{\pm 1}]$ subalgebra of the skew field of fractions of $\mathcal{T}$ generated by the quantum cluster variables $\{Z_n\}_{n \in \Z}$, which follow the recursion $Z_{n+1}Z_{n-1} = q^{-r}Z_n^r + 1$ (cf. \autoref{eqn: cluster recurrence}).  We use the bijectivity of $\Phi$ along with Rupel's quantum weighting on compatible pairs \cite{Rup2} to construct a quantum weighting of collections of colored subpaths.  

For $\beta = \{\beta_1,\dots,\beta_t\} \in \calF'(\calD_n)$, where $\beta_i$ appears to the left of $\beta_{i+1}$, we define the set of \emph{complimentary subpaths} $\overline{\beta_0},\overline{\beta_1},\dots,\overline{\beta_t}$ such that $\overline{\beta_i}$.  For $1 \leq i \leq t-1$, let $\overline{\beta_i}$ contain all edges and northwest corners between the end of $\beta_i$ and the start of $\beta_{i+1}$, where a northwest corner on the boundary of a path is included in $\overline{\beta_i}$ unless it is the right endpoint of a brown or blue subpath.   We set $\overline{\beta_0}$ to be the portion of the path before $\beta_1$ excluding $v_0$, and we set $\overline{\beta_t}$ to be the portion of the path after $\beta_t$ including $v_{c_{n-2}}$.  Note that it is possible for some $\overline{\beta_i}$ to be empty.  Let $|\overline{\beta_i}|_1$ (resp., $|\overline{\beta_i}|_2$) denote the number of northwest corners (resp., edges) in $\overline{\beta_i}$.

\begin{defn}\label{def: quantum weight}
For $\beta = \{\beta_1,\dots,\beta_t\} \in \calF'(\calD_n)$, we let
$$w_q(\beta) = (c_{n-1}+c_{n-2} - 1) + \sum_{j=0}^{t} r|\overline{\beta_j}|_2\left(\sum_{i=1}^t (-1)^{\mathbbm{1}_{i < j}}|\beta_i|_2 \right) + \left(r|\overline{\beta_j}|_1 - r^2|\overline{\beta_j}|_2\right)\left(\sum_{i=1}^t (-1)^{\mathbbm{1}_{i < j}}|\beta_i|_1 \right),$$
where $\mathbbm{1}_{i < j}$ takes value $1$ when $i < j$ and $0$ otherwise.  We then set
$$u_q(\beta) = w_q(\beta) - (c_{n-1}+c_{n-2} - 1) + (c_{n-1} - r|\beta|_1)(c_{n}-r|\beta|_2)\,.$$
\end{defn}

\begin{exmp}
For the collection $\beta \in \calF'(\calD_5)$ from \autoref{exmp: beta intro}, we have $\overline{\beta_0} = \overline{\beta_3} = \emptyset$, $\overline{\beta_1} = \{\alpha_4, \alpha_5\}$, and
$\overline{\beta_2} = \{v_2\}$.  We thus have $w_q(\beta) = 10$.
\end{exmp}

We prove that the quantum cluster variable Laurent coefficients can be expressed as a sum over weighted collections of subpaths in $\calF'(\calD_n)$.

\begin{thm}\label{thm: quantum expansion}
 Consider the quantum cluster algebra $\calA_q(r,r)$ with quantum cluster variables $Z_i$ for $i \in \Z$.  For $n \geq 4$, we have
 $$Z_n = Z_1^{-c_{n-1}}Z_2^{-c_{n-2}} \sum_{\beta \in \calF'(\calD_n)} q^{u_q(\beta)}Z_1^{r|\beta|_1}Z_2^{r\left(c_{n-1} - |\beta|_2\right)}$$
 and 
 $$Z_{3-n} = Z_2^{-c_{n-1}}Z_1^{-c_{n-2}} \sum_{\beta \in \calF'(\calD_n)} q^{u_q(\beta)}Z_2^{r|\beta|_1}Z_1^{r\left(c_{n-1} - |\beta|_2\right)}\,.$$
\end{thm}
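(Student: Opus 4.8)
The plan is to transport Rupel's quantum expansion formula for compatible pairs in $\calC_n$ across the bijection $\Phi$ from \autoref{thm: Phi bijectivity}. Rupel's formula (the quantum analogue in \cite{Rup2}, specialized to the cluster variable case) expresses $Z_n$ as a sum over compatible pairs $(S_1,S_2)$ in $\calC_n$, where each pair is weighted by $q$ to a power that is itself a sum over ordered pairs of edges of the maximal Dyck path, together with a monomial in $Z_1,Z_2$ determined by $|S_1|$ and $|S_2|$. Since $\Phi$ is a bijection, it suffices to verify two things: (a) under $\Phi$, the $Z_1,Z_2$-monomial attached to $(S_1,S_2) = \Phi(\beta)$ matches the monomial $Z_1^{r|\beta|_1}Z_2^{r(c_{n-1}-|\beta|_2)}$ appearing in the statement; and (b) the $q$-exponent of Rupel's weight on $\Phi(\beta)$ equals $u_q(\beta)$ as defined in \autoref{def: quantum weight}. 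Part (a) should essentially be inherited from the classical case \autoref{cor: LS modified expansion}, since setting $q = 1$ in \autoref{thm: quantum expansion} must recover \autoref{cor: LS modified expansion}; concretely, one checks $|S_1| = c_{n-1} - |\beta|_2$ and $|S_2| = |\beta|_1$ directly from \autoref{defn: Phi}, using that $\Phi_1(\beta)$ picks out the horizontal edges of $\calC_n$ not covered by $\beta$ and $\Phi_2(\beta)$ picks out one vertical edge per height spanned by a $\gamma(i,k) \in \beta$. (One must be slightly careful that the horizontal/vertical edge counts of $\calC_n = \calP(c_{n-1},c_{n-2})$ and $\calD_n = \calP(c_{n-1}-c_{n-2},c_{n-2})$ differ by exactly the $c_{n-2}$ vertical edges, which is tracked by the northwest-corner data.)

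The substantive work is part (b): rewriting Rupel's double-sum $q$-exponent over edge pairs of $\calC_n$ in terms of the complimentary-subpath statistics $|\overline{\beta_j}|_1, |\overline{\beta_j}|_2$. The strategy is to decompose the maximal Dyck path $\calC_n$ according to the image $\Phi(\beta)$: the path is cut into the ``occupied'' segments corresponding to the subpaths $\beta_1,\dots,\beta_t$ and the complimentary segments $\overline{\beta_0},\dots,\overline{\beta_t}$. Rupel's exponent is a sum over ordered pairs $(e,e')$ of edges; grouping these pairs according to which segment each of $e,e'$ lies in, the contribution splits into (i) diagonal terms where both edges lie in the same segment, (ii) cross terms between an occupied segment $\beta_i$ and a complimentary segment $\overline{\beta_j}$, and (iii) cross terms between two complimentary segments or between two occupied segments. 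The sign $(-1)^{\mathbbm{1}_{i<j}}$ in \autoref{def: quantum weight} strongly suggests that the cross terms between $\beta_i$ and $\overline{\beta_j}$ are precisely the ones surviving, with a sign depending on whether $\beta_i$ precedes or follows $\overline{\beta_j}$ along the path — this is the orientation-dependence built into Rupel's $|tw|_1 = r|\overrightarrow{tw}\cap S_2|_2$ versus $|ut|_2 = r|\overrightarrow{ut}\cap S_1|_1$ compatibility alternatives. I would carry out this bookkeeping by first handling the single-subpath case ($t = 1$), where the formula reduces to $r|\overline{\beta_0}|_2 \cdot (-|\beta_1|_2)$-type terms plus the $(c_{n-1}-r|\beta|_1)(c_n - r|\beta|_2)$ correction, then induct or directly sum over $t$ using linearity of Rupel's exponent in the edge sets.

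I expect the main obstacle to be the precise matching of constants and the ``off-diagonal within a single segment'' contributions — i.e., showing that Rupel's intra-segment edge-pair sums either vanish or collapse into the closed-form constant $(c_{n-1}+c_{n-2}-1)$ and the product term $(c_{n-1}-r|\beta|_1)(c_n-r|\beta|_2)$. This requires an explicit evaluation of Rupel's quadratic form restricted to a contiguous sub-Dyck-path, which in turn uses the self-similar structure of maximal Dyck paths (the recursion $c_n = rc_{n-1} - c_{n-2}$ and the northwest-corner combinatorics developed in \autoref{sec: prelims} and \autoref{thm: path slope result}). A clean way to organize this is to define an auxiliary bilinear pairing on subsets of edges of $\calC_n$ mirroring Rupel's exponent, prove it is additive under concatenation of Dyck subpaths up to an explicit boundary correction, and then evaluate it on the building blocks $\beta_i$ (which, being blue or $(m,w)$-brown, are themselves maximal Dyck subpaths with controlled slope data) and $\overline{\beta_j}$. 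Once the pairing is shown to be additive with the right boundary terms, the appearance of $u_q(\beta)$ in the stated form follows by collecting terms, and the second identity for $Z_{3-n}$ follows by the standard symmetry $Z_1 \leftrightarrow Z_2$ together with the invariance of $\calF'(\calD_n)$ and $u_q$ under this swap (or equivalently from the analogous symmetry already present in Rupel's formula).
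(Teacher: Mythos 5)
Your proposal is correct and follows essentially the same route as the paper: transport Rupel's compatible-pair formula across the bijection $\Phi$, check the monomial exponents from \autoref{defn: Phi}, and regroup the quadratic $q$-exponent according to the segments $\beta_i$ and $\overline{\beta_j}$, with the base constant $c_{n-1}+c_{n-2}-1$ evaluated separately via the self-similar Dyck-path structure. The paper organizes the same bookkeeping through a word formalism (the weight $w_q$ summed over length-two subwords, \autoref{lem: quantum empty path} for the empty collection, and \autoref{lem: quantum coord formula} for the cross terms, using that intra-segment weights are unchanged by the substitution $H \mapsto h$, $v \mapsto V$), which is precisely the additive bilinear pairing you describe.
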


\section{Preliminaries}\label{sec: prelims}

Let  $r \geq 2$ be fixed throughout this paper.  Both the Lee-Schiffler and Lee-Li-Zelevinsky expansion formulas involve sums over certain collections of edges in a maximal Dyck path.  Moreover, the width and height of these Dyck paths have certain recursive properties that are used in the proofs of both formulas.  We begin by setting up a framework for studying these paths and describing their recursive behavior.

Recall the sequence $c_n$ defined recursively by \autoref{eqn: cn recurrence}.  While the indexing of the sequence $\{c_{n}\}_{n\geq1}$ is identical to the indexing in the work of Lee and Schiffler \cite{LS}, the indexing is shifted by one from that defined by Lin \cite{Lin}, i.e., it is equivalent to $\{c_{n-1}\}_{n\geq 1}$ in Lin's work. It is straightforward to check that for $n > 1$, the quantities $c_{n}$ and $c_{n+1}$ are relatively prime, hence so are $c_{n}$ and $c_{n+1} - c_{n}$.  Thus the only vertices of $\calD_n$ and $\calC_n$ that lie on the main diagonal are the first and last.

Fix $a,b \in \N$.  Consider a rectangle with vertices $(0,0)$, $(0,b)$, $(a,0)$, and $(a,b)$ having a designated diagonal from $(0,0)$ to $(a,b)$.  

\begin{defn}
A \emph{Dyck path} is a lattice path in $\Z^2$ starting at $(0,0)$ and ending at a lattice point $(a,b)$ where $a,b \geq 0$, proceeding by only unit north and east steps and never passing strictly above the diagonal. Given a Dyck path $P$, we denote the number of east steps by $|P|_1$ and the number of north steps by $|P|_2$.  The \emph{length} of the Dyck path $P$ is the quantity $|P|_1 + |P|_2$.  We denote the set of lattice points contained in the Dyck path $P$, including the left and right endpoints, by $V(P)$.
\end{defn} 

The Dyck paths from $(0,0)$ to $(a,b)$ form a partially ordered set by comparing the heights at all vertices.  The maximal Dyck path $\calP(a,b)$, as defined in \autoref{defn: maximal Dyck path}, is the maximal element under this partial order.  We focus on the following two classes of maximal Dyck paths, defined for $n \geq 3$, $\calC_n = \calP(c_{n-1}, c_{n-2})$ and $\calD_n = \calP(c_{n-1} - c_{n-2}, c_{n-2})$.

Recall that a vertex of a maximal Dyck path $P$ is a \emph{northwest corner} if there are no vertices directly north of (equivalently, to the east of) it.  In $\calD_n$, these are precisely the vertices labeled by $v_i$ for some $0 \leq i \leq c_{n-2}$.

\begin{figure}
    \centering
    \includegraphics[width = 4in]{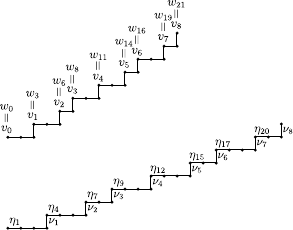}
    \caption{The maximal Dyck paths $\calD_6$ (above) and $\calC_6$ (below) are shown with some of their vertex and edge labels for $r = 3$.  Each northwest corner of $\calD_6$ is labeled with both its corresponding $w_i$ and $v_j$ label.  Some edges of $\calC_6$ are labeled; the $\nu_i$'s refer to the vertical edge left of the label, and the $\eta_j$'s refer to the horizontal edge below the label.}
    \label{fig: path label defns}
\end{figure}

When $a$ and $b$ are relatively prime, as is the case for $\calC_n$ and $\calD_n$, we can associate to this Dyck path the \emph{(lower) Christoffel word} of slope $\frac{b}{a}$ on the alphabet $\{E,N\}$.  This word can be constructed by reading the edges of the maximal Dyck path from $(0,0)$ to $(a,b)$, recording an $E$ for each east step and an $N$ for each north step.  For further background on Christoffel words, see \cite{BLRS}.  

\begin{exmp}
Let $r = 3$.  The Christoffel words corresponding to the maximal Dyck paths $\calD_6$ and $\calC_6$ depicted in \autoref{fig: path label defns} are $$E^2NE^2NENE^2NE^2NENE^2NEN \text{ and } E^3NE^3NE^2NE^3NE^3NE^2NE^3NE^2N\,,$$ respectively.
\end{exmp}

\begin{rem}\label{rem: C to D} The Christoffel word corresponding to $\calC_{n}$ is obtained by applying the morphism $\theta = \{N \mapsto EN\}$, i.e., the map that replaces each instance of the letter $N$ with the string $EN$, to the Christoffel word corresponding to $\calD_{n}$.  This follows directly from, for example, \cite[Lemma 2.2]{BLRS}.  
\end{rem}

\begin{obs}\label{obs: small paths}
It is straightforward to calculate that
$$V(\calC_3) = \{(0,0),(1,0)\} \text{ and } V(\calC_4) = \{(0,0),(1,0),\dots, (r,0), (r,1)\}$$
\end{obs}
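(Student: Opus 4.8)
The plan is to compute everything directly from the definitions. First I would record the relevant initial values of the sequence $\{c_n\}$: by \eqref{eqn: cn recurrence} we have $c_1 = 0$, $c_2 = 1$, and $c_3 = rc_2 - c_1 = r$. Substituting into the definitions $\calC_n = \calP(c_{n-1}, c_{n-2})$ then gives $\calC_3 = \calP(c_2, c_1) = \calP(1,0)$ and $\calC_4 = \calP(c_3, c_2) = \calP(r,1)$. The only point requiring a moment of care is that the indexing of $\{c_n\}$ here is shifted by one from the convention of \cite{Lin}, as flagged in the paragraph following \eqref{eqn: cn recurrence}; using the indexing fixed in this paper, the values above are correct.

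Next I would handle $\calC_3 = \calP(1,0)$. There is exactly one lattice path from $(0,0)$ to $(1,0)$ built from unit north and east steps, namely the single east step, so the maximality condition in \autoref{defn: maximal Dyck path} is vacuous and $V(\calC_3) = \{(0,0),(1,0)\}$.

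Finally, for $\calC_4 = \calP(r,1)$, the diagonal from $(0,0)$ to $(r,1)$ is the line $y = x/r$. Any Dyck path to $(r,1)$ never passes strictly above this line, so each of its vertices $(x,y)$ satisfies $y \le x/r$; since $y$ is a nonnegative integer and $x/r < 1$ whenever $x \le r-1$, this forces $y = 0$ for all such $x$. Hence the path must take $r$ consecutive east steps from $(0,0)$ to $(r,0)$ followed by a single north step to $(r,1)$. This is in fact the unique Dyck path to $(r,1)$, so it is a fortiori the maximal one, and $V(\calC_4) = \{(0,0),(1,0),\dots,(r,0),(r,1)\}$.

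There is no genuine obstacle in this argument; the statement is recorded here only because these two base cases are invoked repeatedly in the inductive arguments of later sections, and the minor subtlety is simply to keep the (paper-internal) indexing of $c_n$ straight.
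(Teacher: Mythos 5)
Your proposal is correct and matches the paper's intent: the paper states this as an unproved "straightforward to calculate" observation, and your direct computation ($c_3 = r$, so $\calC_3 = \calP(1,0)$ and $\calC_4 = \calP(r,1)$, whose unique Dyck paths are immediate) is exactly the calculation being alluded to. No issues.
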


As we shall see, both of these families of Dyck paths have a recursive structure.  The following lemma is a special case of a result of Rupel.

\begin{lem}[{\cite[Lemma 3]{Rup}}]\label{lem: vertex Dn recursion}
For all $n \geq 4$, the maximal Dyck path $\calD_n$ consists of $r-1$ copies of $D_{n-1}$ followed by a copy of $D_{n-1}$ with a prefix $D_{n-2}$ removed.  In particular, $\calD_{n-1}$ (resp., $\calC_{n-1}$) is a subpath of $\calD_{n}$ (resp., $\calC_{n}$).
\end{lem}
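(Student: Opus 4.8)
The plan is to identify each maximal Dyck path with its height function and reduce the asserted decomposition to two explicit floor identities. Since $\gcd(c_{n-1}-c_{n-2},c_{n-2})=1$, the path $\calD_n$ is determined by the heights $h_n(x):=\lfloor s_n x\rfloor$ over the columns $0\le x\le c_{n-1}-c_{n-2}$, where $s_n=c_{n-2}/(c_{n-1}-c_{n-2})$ is the slope of its diagonal (between consecutive columns the path is forced to take its east step before its north steps, by maximality below the diagonal). Writing $(A,B)=(c_{n-2}-c_{n-3},c_{n-3})$ and $(A'',B'')=(c_{n-3}-c_{n-4},c_{n-4})$ for the dimensions of $\calD_{n-1}$ and of its candidate prefix $\calD_{n-2}$, and $W=c_{n-1}-c_{n-2}$, the claimed concatenation is equivalent to two statements: (i) $h_n(kA+x')=kB+h_{n-1}(x')$ for $0\le k\le r-2$ and $0\le x'\le A$ (the $r-1$ full copies of $\calD_{n-1}$); and (ii) $h_n((r-1)A+t)=(r-1)B+h_{n-1}(A''+t)-B''$ for $0\le t\le (r-2)c_{n-3}$ (the final copy with its $\calD_{n-2}$-prefix removed).

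The engine is the Cassini-type invariant $c_m^2-c_{m-1}c_{m+1}=1$, which I would prove in one line: the recurrence \eqref{eqn: cn recurrence} makes consecutive differences of $c_m^2-c_{m-1}c_{m+1}$ vanish, and the value at $m=2$ is $1$. Specializing to $m=n-3$ gives $c_{n-3}^2-c_{n-4}c_{n-2}=1$, which unpacks into the two facts I need: $c_{n-2}(c_{n-2}-c_{n-3})-c_{n-3}(c_{n-1}-c_{n-2})=1$, i.e.\ $s_n=\frac BA+\frac1{WA}$; and $c_{n-3}(c_{n-3}-c_{n-4})-(c_{n-2}-c_{n-3})c_{n-4}=1$, i.e.\ $\frac BA A''=B''+\frac1A$. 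Geometrically, the diagonal of $\calD_n$ is steeper than that of $\calD_{n-1}$ by exactly $\frac1{WA}$, so the copy-corners $(kA,kB)$ lie just below it, while the prefix-corner of $\calD_{n-2}$ inside $\calD_{n-1}$ lies just below the latter's diagonal by exactly $\frac1A$.

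For the full copies, substituting $s_n=\frac BA+\frac1{WA}$ turns (i) into $\lfloor\frac BA x'+\frac{kA+x'}{WA}\rfloor=\lfloor\frac BA x'\rfloor$, so I must only check that the nonnegative perturbation never pushes $\frac BA x'$ across the next integer; with $\rho:=Bx'\bmod A$ this is the inequality $W\rho+kA+x'<WA$, which follows from the crude bound $W(A-1)+(r-2)A+(A-1)<WA$, equivalent to $(r-1)A\le W$, i.e.\ to $(r-2)c_{n-3}\ge0$. The truncated copy (ii) is the crux and where I expect the real difficulty: the same substitutions reduce it to comparing $\lfloor\frac BA t+\frac{(r-1)A+t}{WA}\rfloor$ with $\lfloor\frac BA t+\frac1A\rfloor$, whose perturbations differ by $\frac{t-(r-2)c_{n-3}}{WA}\le0$. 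A floor can jump only where $\frac BA t$ is immediately below an integer, i.e.\ where $Bt\equiv-1\pmod A$; but $BA''\equiv1\pmod A$ (from $\frac BA A''=B''+\frac1A$) forces the unique such residue to be $t\equiv-A''\equiv(r-2)c_{n-3}\pmod A$, which in the range $0\le t\le (r-2)c_{n-3}$ occurs only at the right endpoint, where the two perturbations coincide at $\frac1A$. Hence both floors agree throughout, and (ii) holds.

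Assembling (i) and (ii) over the columns $0\le x\le W$ yields the asserted decomposition; in particular $\calD_{n-1}$, appearing as the first copy (present since $r-1\ge1$), is a prefix and hence a subpath of $\calD_n$, and applying the prefix-preserving morphism $\theta$ of \autoref{rem: C to D}, under which $\calC_m=\theta(\calD_m)$, gives $\calC_{n-1}\subseteq\calC_n$. Reading (ii) as a prefix-removal presupposes that $\calD_{n-2}$ is genuinely the initial segment of $\calD_{n-1}$; this is the one-level-down instance of the statement, which the same bound (run with $k=0$) supplies, and it forces $n\ge5$. The boundary case $n=4$, where $\calD_4=E^{r-1}N$ and the truncated copy degenerates to a single north step, and the thin case $r=2$, where every $\calD_n$ equals $EN^{c_{n-2}}$, are both immediate by direct computation and are consistent with \autoref{obs: small paths}. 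The main obstacle is thus (ii): it is exactly there that one must pin down the critical residue class, and the Cassini identity at level $n-1$ is what places it precisely at the block boundary, so that the final truncation matches the removal of $\calD_{n-2}$; the full copies and all remaining bookkeeping are routine.
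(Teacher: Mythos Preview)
The paper does not prove this lemma at all; it is stated with attribution to Rupel \cite[Lemma 3]{Rup} and used as a black box. So there is no in-paper argument to compare against.

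Your argument is correct. Encoding $\calD_m$ by its column-height function $h_m(x)=\lfloor s_m x\rfloor$ and reducing the concatenation claim to the two floor identities (i) and (ii) is a clean strategy, and the Cassini relation $c_m^2-c_{m-1}c_{m+1}=1$ is exactly the right tool: it simultaneously pins the slope discrepancy $s_n-s_{n-1}=\frac{1}{WA}$ (this is the $m=n-2$ instance, not $m=n-3$ as you wrote, though both hold) and the prefix relation $\frac{B}{A}A''=B''+\frac{1}{A}$ (the $m=n-3$ instance). Your bound $(r-1)A\le W$ for (i) is sharp enough, and your analysis of (ii)---locating the unique residue $t\equiv -A''\pmod A$ where the two perturbations could disagree, and observing it sits at the right endpoint where they coincide---is the genuine content. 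The handling of $n=4$ and $r=2$ by inspection, and the appeal to \autoref{rem: C to D} for the $\calC_n$ statement, are fine. One small point worth making explicit: in the bound for (i) you use $x'\le A-1$, which is justified because the extremal case $x'=A$ forces $\rho=0$ and is then trivial; you gesture at this but it deserves a word.
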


This allows us to define the limit of these paths, which can be realized by taking a union of finite subpaths.

\begin{defn}
Let $\calC$ (resp., $\calD$) be the infinite path on $\Z^2$ formed by the union $\bigcup_{n \geq 3} \calC_n$ (resp., $\bigcup_{n \geq 3} \calD_n$).  We identify the paths $\calC_n$ and $\calD_n$ with the prefix of the same length of $\calC$ and $\calD$, respectively.  Thus, the vertices of $\calD$ are labeled by $w_i$ and the northwest corners by $v_j$, as described after \autoref{defn: maximal Dyck path} in \autoref{sec: results}.  Similarly, horizontal edges of $\calC$ are labeled by $\eta_i$ and the vertical edges are labeled by $\nu_j$.
\end{defn}

\section{Simplification of the Colored Dyck Subpaths Conditions}\label{sec: simplification}
\subsection{Lee-Schiffler Expansion Formula}\label{subsec: ls expansion}
We first recall the original expansion formula given by Lee and Schiffler \cite{LS} for rank-two skew-symmetric cluster variables.  This requires us to set up the language of colored subpaths in a Dyck path via Lee and Schiffler's conventions, which differs from that in \autoref{sec: results}.  We then describe the map between certain non-overlapping collections of colored subpaths, namely from $\calF(\calD_n)$ as defined by Lee-Schiffler to the set $\calF'(\calD_n)$ which we defined after \autoref{def: brown path} in \autoref{sec: results}.

Let $s$ denote the slope of the main diagonal of $\calD_n$, so $s = \frac{c_{n-2,r}}{c_{n-1,r} - c_{n-2,r}}$.
\begin{defn}[\cite{LS}, cf. \autoref{def: brown path}]\label{def: LS colors}
 For any $0 \leq i < k \leq c_{n-2}$, let $\alpha(i,k)$ be the subpath of $\calD_n$ defined as follows:
 \begin{enumerate}[(1)]
     \item If $s(v_i,v_t) \leq s$ for all $t$ such that $i < t \leq k$, then $\alpha(i,k)$ is defined to be the subpath from $v_i$ to $v_k$; each such subpath is called {\color{blue} blue}.
     \item If $s(v_i,v_t) > s$ for some $i < t \leq k$, then
     \begin{enumerate}
         \item[(2-a)] if the smallest such $t$ is of the form $i + c_m - wc_{m-1}$ for some $3 \leq m \leq n-2$ and $1 \leq w \leq r-2$, then $\alpha(i,k)$ is defined to be the subpath from $v_i$ to $v_k$; each such subpath is called {\color{darkspringgreen} $(m,w)$-green}.
         \item[(2-b)] otherwise, $\alpha(i,k)$ is set to be the subpath from the vertex immediately below $v_i$ to $v_k$; each such subpath is called {\color{red} red}.
     \end{enumerate}
    \end{enumerate}
    \end{defn}
    Each such pair $i,k$ corresponds to precisely one subpath of $\calD_n$.  Denote the single edges of $\calD_n$ be  by $\alpha_1,\dots,\alpha_{c_{n-1}}$ proceeding from southwest to northeast, and let
    $$\calP(\calD_n) = \{\alpha(i,k) : 0 \leq i < k \leq c_{n-2}\} \cup \{\alpha_1,\dots,\alpha_{c_{n-1}}\}\,.$$
    The formula involves sums over collections of subsets of $\calP(\calD_n)$ satisfying certain non-overlapping requirements.  In particular, Lee and Schiffler set
    \begin{align*}
    \calF(\calD_n) = \{\{\beta_1,\dots,\beta_t\} :\;& t \geq 0, \beta_j \in \calP(\calD_n) \text{ for all } 1 \leq j \leq t,\\
    &\text{if $j \neq j'$ then $\beta_j$ and $\beta_{j'}$ have no common edge,}\\
    &\text{if $\beta_j = \alpha(i,k)$ and $\beta_{j'} = \alpha(i',k')$ then $i\neq k'$ and $i' \neq k$,}\\
    &\text{and if $\beta_j$ is $(m,w)$-green then at least one of the $(c_{m-1}-wc_{m-2})$}\\
    &\;\;\;\;\;\;\;\;\text{preceding edges of $v_i$ is contained in some $\beta_{j'}$}\}\,.
    \end{align*}
  
    For any collection of subpaths $\beta$, we associate two non-negative integers $|\beta|_1$ and $|\beta|_2$.  The first quantity $|\beta|_1$ is defined to be $0$ on single edges and $k-i$ on $\alpha(i,k)$, then extended additively on unions of these subpaths.  The second quantity $|\beta|_2$ is the total number of edges $\alpha_i$ covered by the subpaths in $\beta$.  We can now state the original formulation of Lee and Schiffler's expansion result.
    \begin{thm}[{\cite[Theorem 9]{LS}}]\label{thm: LS expansion}
     For $n \geq 4$, we have
     $$X_n = X_1^{-c_{n-1}}X_2^{-c_{n-2}} \sum_{\beta \in \calF(\calD_n)} X_1^{r|\beta|_1}X_2^{r\left(c_{n-1} - |\beta|_2\right)}$$
     and 
     $$X_{3-n} = X_2^{-c_{n-1}}X_1^{-c_{n-2}} \sum_{\beta \in \calF(\calD_n)} X_2^{r|\beta|_1}x_1^{r\left(c_{n-1} - |\beta|_2\right)}\,.$$
    \end{thm}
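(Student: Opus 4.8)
The plan is to prove the formula by induction on $n$, showing that the right-hand side satisfies the exchange recurrence \eqref{eqn: cluster recurrence} together with the correct low-order data; uniqueness of the solution of the recurrence then forces equality with $X_n$. Write
$$F_n \;=\; \sum_{\beta \in \calF(\calD_n)} X_1^{r|\beta|_1}\,X_2^{r(c_{n-1}-|\beta|_2)},$$
so the claim is the ansatz $X_n = X_1^{-c_{n-1}}X_2^{-c_{n-2}}F_n$. Substituting this ansatz into $X_{n-1}X_{n+1} = X_n^r + 1$ and simplifying the monomial prefactors via the recursion $c_{n}=rc_{n-1}-c_{n-2}$ (so that $c_{n-2}+c_n = rc_{n-1}$ and $c_{n-3}+c_{n-1}=rc_{n-2}$), every power collapses and the whole statement reduces to the single polynomial identity
$$F_{n-1}\,F_{n+1} \;=\; F_n^{\,r} + X_1^{rc_{n-1}}X_2^{rc_{n-2}}.$$
Thus it suffices to verify base cases by direct computation of the small paths $\calD_3 = \calP(1,0)$ and $\calD_4 = \calP(r-1,1)$ (where one checks $F_3 = X_2^r+1$ and the corresponding identity for $X_4$), and then to establish the displayed identity for all $n$.

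The second displayed formula, for $X_{3-n}$, requires no separate argument. The recurrence \eqref{eqn: cluster recurrence} is invariant under the reflection $m\mapsto 3-m$, which interchanges the initial variables $X_1$ and $X_2$; hence the sequence $m\mapsto X_{3-m}$ satisfies the same recurrence with $X_1$ and $X_2$ swapped, and the formula for $X_{3-n}$ is obtained from that for $X_n$ simply by exchanging $X_1\leftrightarrow X_2$. So the entire problem is concentrated in the first formula, and in turn in the combinatorial identity above.

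The heart of the proof, and the main obstacle, is a weight-preserving combinatorial interpretation of that identity built from the recursive structure of the maximal Dyck paths. By \autoref{lem: vertex Dn recursion}, $\calD_{n+1}$ decomposes as $r-1$ consecutive copies of $\calD_n$ followed by one more copy of $\calD_n$ with a $\calD_{n-2}$ prefix removed. The strategy is to read a collection $\beta'\in\calF(\calD_{n+1})$ block by block: its restriction to the $r$ copies of $\calD_n$ should yield (almost) an $r$-tuple of collections in $\calF(\calD_n)$, which is exactly what $F_n^{\,r}$ enumerates, while the factor $F_{n-1}$ accounts for the removed prefix together with the interactions of subpaths across block boundaries. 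Making this precise amounts to constructing a bijection matching the terms of $F_{n-1}F_{n+1}$ with those of $F_n^{\,r}$, leaving a single unpaired extremal configuration (a maximal blue subpath on $\calD_{n+1}$ together with the empty collection on $\calD_{n-1}$) that contributes the leftover monomial $X_1^{rc_{n-1}}X_2^{rc_{n-2}}$. The delicate points, where essentially all the work lies, are (i) tracking how the green/red coloring conditions and the ``preceding edge'' requirement in the definition of $\calF(\calD_n)$ behave when a subpath straddles a block boundary or has its required preceding edges in an adjacent block, and (ii) handling the shared-vertex conditions ($i\neq k'$, $i'\neq k$) when collections are split and reglued. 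The recursive slope computation of \autoref{thm: path slope result} is precisely the tool that pins down which green subpaths are admissible and makes this bookkeeping tractable, ensuring that all boundary-crossing contributions cancel in pairs except for the one distinguished term.
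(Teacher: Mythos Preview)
The paper does not prove this theorem at all: it is quoted verbatim as \cite[Theorem~9]{LS} and used as a black box. There is therefore no ``paper's own proof'' to compare against; the paper's contribution is the simplification (\autoref{cor: LS modified expansion}) deduced from this cited result together with \autoref{thm: path slope result}.

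As for your proposal on its own merits: the reduction to the polynomial identity $F_{n-1}F_{n+1}=F_n^{\,r}+X_1^{rc_{n-1}}X_2^{rc_{n-2}}$ is correct and is indeed how Lee and Schiffler organize their argument. The symmetry observation for $X_{3-n}$ is also fine. But what you have written is an outline, not a proof. The entire difficulty lies in actually constructing the bijection you describe in your final paragraph, and you have not done so: phrases like ``should yield (almost) an $r$-tuple,'' ``accounts for the removed prefix together with the interactions,'' and ``all boundary-crossing contributions cancel in pairs'' are exactly the assertions that require work. In Lee and Schiffler's paper this bijection occupies the bulk of the argument and involves a delicate case analysis of how colored subpaths interact with the block decomposition of $\calD_{n+1}$; none of that analysis appears here. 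Finally, your appeal to \autoref{thm: path slope result} is circular in the context of this paper: that result is proved downstream and concerns the simplified brown/blue framework $\calF'(\calD_n)$, whereas the statement you are trying to prove is about the original red/green/blue framework $\calF(\calD_n)$, so it cannot be invoked as a tool in a self-contained proof of \autoref{thm: LS expansion}.
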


In order to show that our expansion formula \autoref{cor: LS modified expansion} is equivalent to \autoref{thm: LS expansion}, we define the map $\chi$ which connects the colored subpaths in both settings.

\begin{defn}\label{defn: chi}
We define a map $\chi: \calF(\calD_n) \to \calF'(\calD_n)$ that modifies the colored subpaths of $\beta \in \calF(\calD_n)$ via the following rules:
\begin{itemize}
    \item replace each red subpath in $\beta$ with the two subpaths obtained by splitting into its first edge and the remainder of the path, 
    \item change the color of $(m,w)$-green subpaths to become $(m,w)$-brown subpaths 
    \item the blue subpaths and single edges remain unchanged.
\end{itemize} 
\end{defn}

\begin{figure}
\centering
\includegraphics[width = 2.5in]{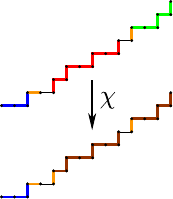}
\caption{The top image depicts a collection of colored Dyck subpaths in $\calF(\calD_6)$, namely, $\{\alpha(0,1),\alpha_4,\alpha(2,5),\alpha_{16},\alpha(6,8)\}$.  The bottom image depicts the corresponding collection of colored Dyck subpaths in $\calF'(\calD_{6})$, namely $\{\gamma(0,1),\alpha_4,\alpha_6,\gamma(2,5),\alpha_{16},\gamma(6,8)\}$, obtained by applying the map $\chi$ to the top collection.  }
\label{fig: colored path comparison}
\end{figure}

Note that the set of edges covered by $\beta$ is preserved under $\chi$.  \autoref{lem: simplified paths} establishes that $\chi$ is well-defined and is in fact a weight-preserving bijection with respect to $|\beta|_1$ and $|\beta|_2$.  The statement and proof of \autoref{lem: simplified paths} appear in \autoref{subsec: simplification proof}.

\subsection{\texorpdfstring{Vertices and Slopes in $\calD_n$}{Vertices and Slopes in Dn}}\label{subsec: vertices dyck}
We now prove several results relating to position of vertices in $\calD_n$ and the slopes of the line segments between northwest corners of $\calD_n$.  To help illuminate the recursive structure of the infinite path $\calD$, which contains each $\calD_n$ as a prefix, we define a map taking vertices to northwest corners in $\calD$.

\begin{defn}
Let $\mu:V(\calD) \to V(\calD)$ be the map sending $w_i$, the $i^\Th$ vertex of $\calD$, to  $v_i$, the $i^{\Th}$ northwest corner of $\calD$.
\end{defn} 

The following result describes the behavior of $\mu$ in terms of coordinates.

\begin{lem}\label{lem: corner map}
If $w_i \in V(\calD)$ has coordinates $(x,y)$, then $\mu(w_i) = \left((r-1)x + (r-2)y, x + y\right)$.
\end{lem}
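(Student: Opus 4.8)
The plan is to prove \autoref{lem: corner map} by induction on $i$, using the recursive structure of the infinite Christoffel word for $\calD$ together with the morphism $\theta = \{N \mapsto EN\}$ from \autoref{rem: C to D}. First I would establish the base case: the $0^{\Th}$ vertex $w_0 = (0,0)$ maps to $v_0 = (0,0)$, and indeed $\left((r-1)\cdot 0 + (r-2)\cdot 0, 0+0\right) = (0,0)$.

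For the inductive step, suppose $w_i = (x,y)$ and $\mu(w_i) = v_i = \left((r-1)x + (r-2)y, x+y\right)$. I need to understand how $w_{i+1}$ and $v_{i+1}$ differ from $w_i$ and $v_i$. Here is the key observation: the northwest corners $v_0, v_1, v_2, \dots$ of $\calD$ are exactly the vertices of $\calD$ that sit immediately after an $N$-step (or at the very start), since a northwest corner has no vertex directly to its east. Equivalently, reading the Christoffel word of $\calD$, the $j^{\Th}$ vertex is a northwest corner precisely when the edge following it is an $N$ (with $v_0$ the origin). So if the word of $\calD$ has exactly $i$ occurrences of $N$ among its first $p$ letters with the $p^{\Th}$ letter being $N$... rather than chase this directly, the cleaner route is: the number of letters in the prefix of the $\calD$-word ending just before the $(i{+}1)^{\Th}$ occurrence of $N$ equals the length of the prefix of the $\calC$-word ending just before the $(i{+}1)^{\Th}$ occurrence of $N$, divided appropriately — because applying $\theta$ to the $\calD$-word gives the $\calC$-word, and $\theta$ sends the block of letters between consecutive $N$'s (a run $E^a N$) to $E^{a} E N = E^{a+1} N$.

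Concretely, the plan is this. Let the $\calD$-word between the $i^{\Th}$ and $(i{+}1)^{\Th}$ north-steps be $E^{d_i} N$ where $d_i$ is the number of east-steps strictly between $v_{i-1}$ and $v_i$ along $\calD$ — so $w_{i+1} = v_i + (d_{i+1}, 0) + (0,1)$... I need to be careful: actually $v_i$ is the leftmost vertex at height $i$, i.e. the vertex right after the $i^{\Th}$ $N$-step, so moving from $v_i$ to $v_{i+1}$ along $\calD$ costs $d_{i+1}$ east steps then one north step, giving $v_{i+1} = v_i + (d_{i+1}, 1)$. On the $\calC$ side, \autoref{rem: C to D} shows the corresponding block becomes $E^{d_{i+1}+1}N$, so the vertex $u_i$ right after the $i^{\Th}$ $N$-step of $\calC$ satisfies $u_{i+1} = u_i + (d_{i+1}+1, 1)$, with $u_0 = (0,0)$. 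Summing, $v_i = \left(\sum_{j \le i} d_j,\ i\right)$ and $u_i = \left(\sum_{j\le i} d_j + i,\ i\right) = v_i + (i,0)$. Now the point is that $\mu$ is being defined on all of $V(\calD)$, and the formula must agree at the northwest corners $v_i$ themselves, where the claim reads $\mu(v_i) = \left((r-1)\sum_{j\le i} d_j + (r-2)i,\ \sum_{j\le i}d_j + i\right)$; but also $\mu(v_i) = v_{\,?}$ for the index of $v_i$ as a vertex of $\calD$, and I should double-check this index equals $i + \sum_{j \le i} d_j$, which is exactly the second coordinate of $u_i$, consistent with the fact that $\calC_n = \calP(c_{n-1},c_{n-2})$ has width $c_{n-1}$ and the prefix structure lines up. To handle non-corner vertices $w_i$ between $v_{k}$ and $v_{k+1}$, I would note $w_i = v_k + (\ell, 0)$ for some $0 \le \ell \le d_{k+1}$, and then $\mu(w_i)$ should be $v_k$ shifted east by... here the linearity of the claimed formula in $(x,y)$ does the work: $\mu$ is forced to be the restriction of the $\Z$-linear map $(x,y) \mapsto \left((r-1)x + (r-2)y, x+y\right)$, so I just verify the recursions $w_{i+1} - w_i \in \{(1,0),(0,1)\}$ are sent correctly, i.e. $(1,0) \mapsto (r-1,1)$ and $(0,1)\mapsto(r-2,1)$ match the increments $v_{i+1}-v_i$ between consecutive \emph{vertices} of $\calD$ under $\mu$.

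The main obstacle I anticipate is pinning down that last matching cleanly — i.e. verifying that when we pass from the $i^{\Th}$ vertex $w_i$ of $\calD$ to the $(i{+}1)^{\Th}$ vertex $w_{i+1}$ by an $E$-step, the $i^{\Th}$ northwest corner $v_i$ and the $(i{+}1)^{\Th}$ northwest corner $v_{i+1}$ differ by exactly $(r-1, 1)$, and when the step is an $N$-step they differ by exactly $(r-2,1)$. This amounts to showing that the run lengths $d_j$ of the Christoffel word of $\calD$ are all in $\{r-2, r-1\}$ (which follows from the standard theory of Christoffel words of slope close to that of $\calD$, or directly from \autoref{lem: vertex Dn recursion} and \autoref{obs: small paths} by induction) and then bookkeeping the indices carefully. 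Once that combinatorial fact about the $d_j$'s is in hand — it is essentially \cite[Lemma 2.2]{BLRS} combined with \autoref{rem: C to D} — the rest is a routine induction, and I would present it in a couple of lines rather than grinding through every case.
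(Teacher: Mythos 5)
Your argument is correct, but it takes a genuinely different route from the paper's. The paper's proof is a direct, non-inductive computation: it observes that $\mu(w_i)=\bigl((r-1)x+(r-2)y,\,x+y\bigr)$ is equivalent to a pair of slope inequalities pinning down $((r-1)x+(r-2)y,\,x+y)$ as the leftmost vertex of $\calD$ at height $x+y$, and then derives both inequalities in a few lines from $\tfrac{y}{x}\le\tfrac{c_{n-2}}{c_{n-1}-c_{n-2}}<\tfrac{y+1}{x}$ (i.e.\ from $(x,y)\in V(\calD_n)$) together with $c_n=rc_{n-1}-c_{n-2}$. You instead induct on the vertex index and reduce everything to the claim that the block of the $\calD$-word between the $i^\Th$ and $(i{+}1)^{\text{st}}$ north steps is $E^{r-1}N$ or $E^{r-2}N$ according to whether the $(i{+}1)^{\text{st}}$ letter of the $\calD$-word is $E$ or $N$ --- that is, to the fact that $\calD$ is a fixed point of $\lambda=\{E\mapsto E^{r-1}N,\,N\mapsto E^{r-2}N\}$; after that, linearity of the target formula in $(x,y)$ finishes the proof exactly as you say. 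That fact is true and both of your proposed routes to it work ($\lambda$ is a composition of elementary Christoffel morphisms, or one can induct using \autoref{lem: vertex Dn recursion}), but note that in the paper this $\lambda$-invariance is \autoref{lem: corner structure}, which is \emph{deduced from} \autoref{lem: corner map} --- so you must prove it independently, as you propose, rather than cite it, or the argument becomes circular. The trade-off: the paper's route is shorter and self-contained, while yours front-loads the substitution structure but makes the mechanism transparent (each east step inflates to $r-1$ easts and a north between consecutive corners, each north step to $r-2$ and a north). Two harmless slips: the digression through the $\calC$-word and $\theta$ is not needed for this lemma, and the index of $v_i$ as a vertex of $\calD$ is the \emph{first} coordinate of your $u_i$, not the second.
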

\begin{proof}  
Fix $n$ large enough such that $(x,y) \in V(\calD_n)$.  For each $(x,y) \in V(\calD_n)$, the claim is equivalent to showing that the following inequalities hold:
$$\frac{x+y}{(r-1)x + (r-2)y} \leq \frac{c_{n-1}}{c_{n}-c_{n-1}} < \frac{x+ y+1}{(r-1)x + (r-2)y}\,.$$

In order to prove the first inequality, we first note that $$\frac{c_{n-2}}{c_{n-1}-c_{n-2}} \geq \frac{y}{x}\,,$$ which holds since $(x,y) \in V(\calD_n)$.  Cross multiplying and adding $yc_{n-2}$ to both sides yields
$$\frac{c_{n-2}}{c_{n-1}} \geq \frac{y}{x+y}\,.$$
Hence we have
$$\frac{c_{n-1}}{c_n-c_{n-1}} = \frac{c_{n-1}}{(r-1)c_{n-1}-c_{n-2}} = \frac{1}{(r-1)-\frac{c_{n-2}}{c_{n-1}}} \geq \frac{1}{(r-1)-\frac{y}{x+y}} = \frac{x+y}{(r-1)x+(r-2)y}\,,$$
as desired.

We can prove the second inequality similarly.  Since we have $\frac{c_{n-2}}{c_{n-1}-c_{n-2}} < \frac{y+1}{x},$
then cross multiplying and adding $(y+1)c_{n-2}$ to both sides yields
$$\frac{c_{n-2}}{c_{n-1}} < \frac{y+1}{x+y+1}\,.$$
Thus, we can conclude
\[\frac{c_{n-1}}{c_n-c_{n-1}} = \frac{1}{(r-1)-\frac{c_{n-2}}{c_{n-1}}} < \frac{1}{(r-1)-\frac{y+1}{x+y+1}} \leq  \frac{x+y+1}{(r-1)x+(r-2)y}\,.\qedhere\]
\end{proof}

We now study the slopes of the line segments between northwest corners of $\calD$, as these play a central role in the definition of colored Dyck subpaths in both our setting and the Lee--Schiffler setting.  We will utilize several classical results concerning maximal Dyck paths and Christoffel words, which can be found, for example, in \cite{BLRS}.    

\begin{defn}\label{defn: pi n}
For $n \geq 3$, we define the function $\pi_n: \{0,1,\dots,c_{n-1}\} \to \Z$ by
$$\pi_{n}(i) := xc_{n-2} - (i-x)(c_{n-1}-c_{n-2}) \text{ where } w_i = (x,i-x) \in \calD_n\,.$$
\end{defn}

\begin{rem}\label{rem: pi n}
Note that we have $\pi_n(0) = \pi_n(c_{n-1})= 0$.  It is a standard result from the theory of Christoffel words (see, for example, \cite[Lemma 1.3]{BLRS}) that the sequence $\pi_n(1),\pi_n(2),\dots,\pi_n(c_{n-1})$ is a permutation of the elements $\{0,1,\dots,c_{n-1}-1\}$, and this is order-isomorphic to sequence of distances from each vertex to the line segment between $w_0$ and $w_{c_{n-1}}$. Thus, $s(w_i,w_j) \geq s = s(w_0,w_{c_{n-1}})$ in $\calD_n$ if and only if $\pi_n(i) \leq \pi_n(j)$.
\end{rem}

\begin{exmp}
For $r = 3$, the values $\pi_6(0),\pi_6(1),\dots,\pi_6(21)$ are given by the following sequence:
$$0,8,16,3,11,19,6,14,1,9,17,4,12,20,7,15,2,10,18,5,13,0\,.$$
\end{exmp}

 We now prove several relations between the sequences $\{\pi_n(i)\}_{i=0}^{c_{n-1}}$ and $\{\pi_{n-1}(i)\}_{i=0}^{c_{n-2}}$.

\begin{lem}\label{lem: corner recursion}
For any vertex with coordinates $(x,y)$ in $\calD_{n-1}$ where $n \geq 4$, we have
$$\pi_{n}(rx + (r-1)y) = \pi_{n-1}(x + y)\,.$$
\end{lem}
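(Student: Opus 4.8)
The plan is to reduce the claim to a direct computation with the defining formula for $\pi_n$, using \autoref{lem: corner map} to translate between the coordinates of a vertex of $\calD_{n-1}$ and the coordinates of the corresponding northwest corner, and using the recursion \autoref{eqn: cn recurrence} to rewrite everything in terms of $c_{n-2}$ and $c_{n-3}$. First I would observe that if $(x,y) \in V(\calD_{n-1})$, then $x+y$ is the index of this vertex counted along $\calD_{n-1}$ (i.e.\ $w_{x+y} = (x,y)$ in $\calD_{n-1}$), so the right-hand side is, by \autoref{defn: pi n} applied with $n$ replaced by $n-1$, equal to $x c_{n-3} - y(c_{n-2} - c_{n-3})$.

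Next I would identify the vertex of $\calD_n$ whose index is $rx + (r-1)y$. By \autoref{lem: vertex Dn recursion}, $\calD_n$ is built from copies of $\calD_{n-1}$, and the point of \autoref{lem: corner map} is precisely that the $i$-th vertex $w_i$ of $\calD$ maps under $\mu$ to the $i$-th northwest corner $v_i$, with coordinate formula $\mu(w_i) = ((r-1)x' + (r-2)y', x' + y')$ when $w_i = (x',y')$. Since $w_{x+y} = (x,y)$ in $\calD$, the $(x+y)$-th northwest corner $v_{x+y}$ has coordinates $((r-1)x + (r-2)y,\, x+y)$; and a northwest corner at height $x+y$ that is the leftmost vertex at that height sits at index equal to its $x$-coordinate plus its $y$-coordinate, namely $(r-1)x + (r-2)y + (x+y) = rx + (r-1)y$. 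Hence $w_{rx+(r-1)y}$ in $\calD_n$ has coordinates $(X, Y) = ((r-1)x + (r-2)y,\, x+y)$, so $X$ is the horizontal coordinate and $rx+(r-1)y - X = x+y = Y$ is the vertical coordinate, consistent with \autoref{defn: pi n}.

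Then I would just plug these coordinates into the definition of $\pi_n$:
$$\pi_n(rx+(r-1)y) = X c_{n-2} - Y(c_{n-1} - c_{n-2}) = \bigl((r-1)x + (r-2)y\bigr)c_{n-2} - (x+y)(c_{n-1} - c_{n-2}).$$
Using $c_{n-1} = rc_{n-2} - c_{n-3}$, so that $c_{n-1} - c_{n-2} = (r-1)c_{n-2} - c_{n-3}$, the coefficient of $x$ becomes $(r-1)c_{n-2} - (r-1)c_{n-2} + c_{n-3} = c_{n-3}$, and the coefficient of $y$ becomes $(r-2)c_{n-2} - (r-1)c_{n-2} + c_{n-3} = c_{n-3} - c_{n-2} = -(c_{n-2} - c_{n-3})$. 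This gives exactly $x c_{n-3} - y(c_{n-2} - c_{n-3}) = \pi_{n-1}(x+y)$, completing the proof.

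The computation itself is entirely routine; the only genuine point requiring care is the index bookkeeping in the middle paragraph — namely verifying that the northwest corner $\mu(w_{x+y})$ really does carry the index $rx + (r-1)y$ along $\calD_n$, and that its coordinates are in the form $\calD_n$'s labeling expects (so that \autoref{defn: pi n} applies with the correct $x$). I would make sure $n$ is taken large enough that $(x,y) \in V(\calD_{n-1}) \subseteq V(\calD_n)$ and that the relevant northwest corner lies within the prefix $\calD_n$ of $\calD$, which holds since $x + y \le c_{n-3} + (c_{n-2} - c_{n-3}) = c_{n-2}$ bounds the number of northwest corners of $\calD_n$; the main obstacle, such as it is, is just being careful that the formula of \autoref{lem: corner map} is being applied to a vertex of $\calD$ and not conflating the ambient index with a local index inside one of the $\calD_{n-1}$-blocks.
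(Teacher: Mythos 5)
Your proposal is correct and follows essentially the same route as the paper: identify $w_{rx+(r-1)y}$ in $\calD_n$ as the northwest corner $\mu(w_{x+y}) = ((r-1)x+(r-2)y,\,x+y)$ via \autoref{lem: corner map}, then expand \autoref{defn: pi n} and simplify with the recursion $c_{n-1} = rc_{n-2} - c_{n-3}$ to obtain $xc_{n-3} - y(c_{n-2}-c_{n-3}) = \pi_{n-1}(x+y)$. Your extra care with the index bookkeeping (checking that the corner's index really is $rx+(r-1)y$) is a point the paper leaves implicit, but the argument is the same.
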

\begin{proof}
By \autoref{lem: vertex Dn recursion} and \autoref{lem: corner map}, both $(x,y)$ and $\mu(w_{x+y}) = ((r-1)x+(r-2)y, x+y)$ are vertices of $\calD_n$.  Hence we can expand the right-hand side using \autoref{defn: pi n} and apply the relation $c_{n} = rc_{n-1} - c_{n-2}$ to obtain
\begin{align*}
\pi_{n}(rx + (r-1)y) &= ((r-1)x + (r-2)y)c_{n-2} - (x+y)(c_{n-1}-c_{n-2})\\
&= ((r-1)x + (r-2)y)c_{n-2} - (x+y)((r-1)c_{n-2} - c_{n-3})\\
&= -yc_{n-2} + (x+y)c_{n-3}\\
&= xc_{n-3} - y(c_{n-2} - c_{n-3})\;.
\end{align*}
Comparing with \autoref{defn: pi n}, we see that the final quantity is precisely $\pi_{n-1}(x + y)$, as desired.
\end{proof}

\begin{lem}\label{lem: nw corners}
For $n\geq 3$, the set $\left\{(x,y) \in V(\calD_n) : \pi_n(x+y) \in \{0,\dots,c_{n-1}-1\}\right\}$ is precisely the set of northwest corners of $\calD_n$.  
\end{lem}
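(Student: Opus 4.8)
The plan is to pin down the northwest corners entirely through the values of the function $\pi_n$, combining two inputs. The first is \autoref{rem: pi n}: the values $\pi_n(1),\dots,\pi_n(c_{n-1})$ are distinct and exhaust $\{0,\dots,c_{n-1}-1\}$, so locating the northwest corners amounts to determining which block of values they occupy. The second is the linear behavior of $\pi_n$ along horizontal runs of $\calD_n$, which I read off directly from the explicit formula in \autoref{defn: pi n}. Once I control both, the characterization follows from a short counting argument rather than a delicate case analysis.

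First I would record the increment of $\pi_n$ along an east step. If $w_i=(x,y)$ and $w_{i+1}=(x+1,y)$ are joined by an east step, then $i-x=y$ is unchanged while $x$ increases by one, so \autoref{defn: pi n} gives $\pi_n(i+1)-\pi_n(i)=c_{n-2}$. Now suppose $(x,y)\in V(\calD_n)$ is not a northwest corner. Then it is not the leftmost vertex at its height, so there is a northwest corner $(x_0,y)$ with $x_0<x$ joined to $(x,y)$ by a horizontal run lying entirely on the path. Telescoping the increment along this run yields $\pi_n(x+y)=\pi_n(x_0+y)+(x-x_0)\,c_{n-2}\ge c_{n-2}$, since $\pi_n(x_0+y)\ge 0$ and $x-x_0\ge 1$. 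Thus every non-corner vertex has $\pi_n$-value at least $c_{n-2}$.

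Next I would count. The northwest corners are exactly the $c_{n-2}$ vertices reached by a north step together with $v_0$, hence $c_{n-2}+1$ in all, so the remaining $c_{n-1}-c_{n-2}$ vertices (those reached by an east step) are precisely the non-corners. By the previous paragraph these $c_{n-1}-c_{n-2}$ vertices carry distinct $\pi_n$-values all lying in $\{c_{n-2},\dots,c_{n-1}-1\}$, a set of exactly $c_{n-1}-c_{n-2}$ elements, so the non-corner values fill it completely. Since $\pi_n$ is a bijection onto $\{0,\dots,c_{n-1}-1\}$, the northwest corners are forced to carry precisely the complementary values $\{0,\dots,c_{n-2}-1\}$, which identifies them as exactly the vertices sitting in the lowest block of the value range and establishes the lemma. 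As an independent confirmation of this forward direction, \autoref{lem: corner map} shows the northwest corners of $\calD_n$ are the images $\mu(V(\calD_{n-1}))$, occurring at indices $rx+(r-1)y$, and \autoref{lem: corner recursion} gives $\pi_n(rx+(r-1)y)=\pi_{n-1}(x+y)$; as the values of $\pi_{n-1}$ exhaust $\{0,\dots,c_{n-2}-1\}$, this recovers the corner values from the recursive structure.

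The step requiring the most care is the converse (non-corner) direction, since it is what actually pins the cutoff. I would make sure that the horizontal run from $(x_0,y)$ to $(x,y)$ genuinely lies on $\calD_n$, so that every intermediate lattice point is a vertex and the telescoping is legitimate, and that the non-corners are counted correctly as the $c_{n-1}-c_{n-2}$ east-step vertices. These two facts are exactly what make the pigeonhole step tight: together they force the northwest corners to occupy the bottom $c_{n-2}$ values $\{0,\dots,c_{n-2}-1\}$ of the range $\{0,\dots,c_{n-1}-1\}$ supplied by \autoref{rem: pi n}. I expect the main obstacle to be bookkeeping the boundary indices $i=0$ and $i=c_{n-1}$ (both corners with $\pi_n=0$), ensuring the counting is applied to $\{1,\dots,c_{n-1}\}$ where $\pi_n$ is genuinely bijective.
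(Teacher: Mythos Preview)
Your proof is correct and takes a genuinely different route from the paper's. The paper argues by induction on $n$: it invokes \autoref{lem: corner recursion} to see that $\pi_n(rx+(r-1)y)=\pi_{n-1}(x+y)$ lands in $\{0,\dots,c_{n-2}-1\}$ for every $(x,y)\in V(\calD_{n-1})$, and then uses that $\mu$ carries $V(\calD_{n-1})$ bijectively onto the northwest corners of $\calD_n$ to conclude. Your argument is direct rather than inductive: you compute the east-step increment $\pi_n(i+1)-\pi_n(i)=c_{n-2}$ straight from \autoref{defn: pi n}, deduce that every non-corner has $\pi_n$-value at least $c_{n-2}$, and finish by a pigeonhole count against the bijective range supplied by \autoref{rem: pi n}. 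Your approach is more elementary and self-contained---it needs neither \autoref{lem: corner map} nor \autoref{lem: corner recursion}, and in fact it anticipates the content of \autoref{obs: corner extension}, which in the paper appears only after this lemma. The paper's inductive approach, by contrast, ties the statement directly into the recursive embedding $\calD_{n-1}\hookrightarrow\calD_n$ via $\mu$, which is the structure the subsequent sections exploit; you note this yourself as an ``independent confirmation'' of the forward direction.
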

\begin{proof}
We proceed inductively on $n$.  The base case $n = 3$ is straightforward to check.  \autoref{rem: pi n} implies that $\pi_n^{-1}(0) = \{v_0,v_{c_{n-1}}\}$ and that $\pi_{n}(i)$ takes each value of $\{1,2,\dots,c_{n-1}-1\}$ exactly once for $i \in \{2,3,\dots,c_{n-1}\}$.  For each vertex $(x,y)\in \calD_{n-1}$, we can evaluate
$$\pi_n(rx + (r-1)y) = \pi_{n-1}(x+y) \in \{0,1,2,\dots,c_{n-2}-1\}\,.$$ 
Moreover, the unique vertex $(x',y') \in \calD_n$ such that $x' + y' = rx + (r-1)y$ is $\mu(w_{x+y})$.  Since the image of $\mu$ applied to the set $V(\calD_{n-1})$ is the set of northwest corners of $\calD_n$, the desired set equality holds.
\end{proof}

\begin{cor}\label{lem: corner structure}
Applying the morphism $\lambda = \{E \mapsto E^{r-1}N, N \mapsto E^{r-2}N\}$ to the Christoffel word for $\calD_n$ yields the Christoffel word for $\calD_{n+1}$.

Applying the morphism $\theta \circ \lambda \circ \theta^{-1}$  to the Christoffel word for $\calC_n$ yields the Christoffel word for $\calC_{n+1}$.
\end{cor}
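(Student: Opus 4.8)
The plan is to establish the first statement directly from Lemma \ref{lem: nw corners} together with the recursive structure of $\calD_n$ given in Lemma \ref{lem: vertex Dn recursion}, and then to deduce the second statement formally by conjugating with the morphism $\theta$ via Remark \ref{rem: C to D}. For the first statement, recall that the Christoffel word of $\calD_n$ is the sequence of $E$'s and $N$'s read off the edges of $\calD_n$ from $(0,0)$ to $(c_{n-1}-c_{n-2}, c_{n-2})$. The key observation is that applying $\lambda$ to this word amounts, edge by edge, to the same bookkeeping that $\mu$ performs on vertices: by Lemma \ref{lem: corner map}, a vertex $w_i = (x,y)$ of $\calD_n$ is sent to the northwest corner $\mu(w_i) = ((r-1)x+(r-2)y, x+y)$ of $\calD_{n+1}$, and consecutive vertices $w_i, w_{i+1}$ of $\calD_n$ differ by one step, so their images $\mu(w_i), \mu(w_{i+1})$ differ by $(r-1,1)$ if the step was $E$ and by $(r-2,1)$ if the step was $N$. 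In other words, the portion of $\calD_{n+1}$ between $\mu(w_i)$ and $\mu(w_{i+1})$ traces out $E^{r-1}N$ or $E^{r-2}N$ respectively — exactly the images of $E$ and $N$ under $\lambda$.

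The step that requires care, and which I expect to be the main obstacle, is showing that $\calD_{n+1}$ really is the concatenation of these blocks with nothing extra: that is, that every edge of $\calD_{n+1}$ lies on one of the subpaths from $\mu(w_i)$ to $\mu(w_{i+1})$, and that these subpaths follow the \emph{maximal} Dyck path (not merely some lattice path). For the first point, note that $\mu(w_0) = (0,0)$ and $\mu(w_{c_{n-1}}) = ((r-1)(c_{n-1}-c_{n-2}) + (r-2)c_{n-2}, c_{n-1}) = (c_n - c_{n-1}, c_{n-1})$, which are precisely the endpoints of $\calD_{n+1}$; since the images $\mu(w_0), \mu(w_1), \dots, \mu(w_{c_{n-1}})$ have strictly increasing $x+y$ coordinates and Lemma \ref{lem: nw corners} identifies them as all the northwest corners of $\calD_{n+1}$, they appear in this order along $\calD_{n+1}$ and chop it into consecutive segments. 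For the second point, each segment from $\mu(w_i)$ to $\mu(w_{i+1})$ runs between two northwest corners that are vertically one apart only in the $N$ case or spans a single increment in height otherwise; in a maximal Dyck path the subpath between a northwest corner $(p,h)$ and the next northwest corner is forced to be $E^{\ast}N$ where the number of $E$'s is determined by which vertices lie below the diagonal — and the count $r-1$ or $r-2$ is pinned down by Lemma \ref{lem: corner recursion}, which gives $\pi_{n+1}(rx+(r-1)y) = \pi_n(x+y)$, controlling exactly how far each corner sits from the diagonal and hence how many horizontal edges precede the next corner. Assembling these observations shows the image word $\lambda(\text{Christoffel word of } \calD_n)$ coincides with the Christoffel word of $\calD_{n+1}$.

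For the second statement, I would argue purely formally. By Remark \ref{rem: C to D}, the Christoffel word of $\calC_m$ is $\theta$ applied to that of $\calD_m$, for every $m$; equivalently, the word of $\calD_m$ is $\theta^{-1}$ applied to that of $\calC_m$ (here $\theta = \{N \mapsto EN\}$ is injective on Christoffel words, so $\theta^{-1}$ is well-defined on its image). Then
$$
(\theta \circ \lambda \circ \theta^{-1})(\text{word of } \calC_n) = (\theta \circ \lambda)(\text{word of } \calD_n) = \theta(\text{word of } \calD_{n+1}) = \text{word of } \calC_{n+1},
$$
using the first statement for the middle equality and Remark \ref{rem: C to D} again for the last. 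Since a maximal Dyck path between coprime endpoints is determined by its Christoffel word, this proves the claim. The only delicate point here is the legitimacy of writing $\theta^{-1}$, which is justified because $\theta$ replaces each $N$ by $EN$ and is therefore a bijection from words in $\{E,N\}$ onto those in which every $N$ is immediately preceded by an $E$ — a property every Christoffel word of positive slope with more $E$'s than $N$'s enjoys.
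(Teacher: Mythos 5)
Your proposal is correct and takes essentially the same approach as the paper: both arguments use \autoref{lem: corner map} together with \autoref{lem: nw corners} to identify the northwest corners of $\calD_{n+1}$ as the $\mu$-images of the vertices of $\calD_n$, read off the resulting blocks $E^{r-1}N$ and $E^{r-2}N$ between consecutive corners (the paper phrases this as matching the positions of the $N$'s with those in $\lambda$ of the word for $\calD_n$), and then obtain the $\calC_n$ statement by conjugating with $\theta$ via \autoref{rem: C to D}. Your extra appeal to \autoref{lem: corner recursion} to pin down the block lengths is redundant, since the coordinate difference from \autoref{lem: corner map} already determines them, but it does no harm.
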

\begin{proof}
Let $w_i$ denote the Christoffel word for $\calD_i$.  Applying the map $\mu$ to $V(\calD_n)$ takes a vertex $(x,y)$ to $\left((r-1)x + (r-2)y, x+y\right)$.  Hence by \autoref{lem: nw corners}, the index $j$ of the $k^\Th$ vertical edge $\alpha_j$ in $\calD_{n+1}$, i.e., the positions of $y$'s in $w_{n+1}$, is equal to $$r|\{\alpha_i^n : i < j, \alpha_i^n \text{ is horizontal}\}| + (r-1)|\{\alpha_i^n : i < j, \alpha_i^n \text{ is vertical}\}| + 1\,.$$
This is precisely the position of $y$'s in the word $\lambda(w_n)$, hence we can conclude that $\lambda(w_n) = w_{n+1}$.  The second statement follows from \autoref{rem: C to D}.
\end{proof}

  Finally, we can determine the rest of the values of $\pi_{n}$ from its values on the northwest corners.  

\begin{obs}\label{obs: corner extension}
Suppose that $(i,j) \in \calD_n$ is not a northwest corner.  Let $(i',j)$ be the corner vertex immediately preceding $(i,j)$.  Then it follows from the definition of $\pi_n$ that
$$\pi_n(i + j) = \pi_n(i' + j) + (i-i')c_{n-2}\;.$$
\end{obs}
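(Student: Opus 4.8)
The plan is to prove \autoref{obs: corner extension} by directly unfolding \autoref{defn: pi n}; once the bookkeeping is set up it is a one-line computation. Both vertices named in the statement are vertices of $\calD_n$, so their $w$-indices are well defined and lie in $\{0,1,\dots,c_{n-1}\}$: the vertex with coordinates $(i,j)$ is $w_{i+j}$, and the vertex with coordinates $(i',j)$ is $w_{i'+j}$, so \autoref{defn: pi n} applies to both.

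The only point worth a remark is why the corner vertex immediately preceding $(i,j)$ lies at the same height $j$. At a fixed height $j$ the vertices of a maximal Dyck path form a single horizontal run joined by east steps, whose left endpoint is the northwest corner $v_j$. Since $(i,j)$ is assumed \emph{not} to be a northwest corner, it lies strictly to the east of $v_j$ in this run, so walking backwards along the path from $(i,j)$ one stays at height $j$ until reaching $v_j$; hence the preceding corner is $v_j = (i',j)$, which indeed has height $j$. This is exactly where the hypothesis enters: were $(i,j)$ itself a northwest corner, the preceding corner would lie one row lower and the identity would take a different form.

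It then remains only to substitute. By \autoref{defn: pi n}, since $w_{i+j}$ has coordinates $(i,j)$ we get $\pi_n(i+j) = i\,c_{n-2} - j\,(c_{n-1}-c_{n-2})$, and since $w_{i'+j}$ has coordinates $(i',j)$ we get $\pi_n(i'+j) = i'\,c_{n-2} - j\,(c_{n-1}-c_{n-2})$. Subtracting these, the term $j\,(c_{n-1}-c_{n-2})$ cancels because the two vertices share the height $j$, which leaves $\pi_n(i+j) - \pi_n(i'+j) = (i-i')\,c_{n-2}$, i.e., the asserted formula. There is no genuine obstacle: the statement is essentially a transcription of the definition once one notes that passing from $(i',j)$ to $(i,j)$ changes only the first coordinate, by $i-i'$.
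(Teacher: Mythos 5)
Your proof is correct and matches the paper's intent exactly: the paper treats this as an immediate consequence of \autoref{defn: pi n} (hence states it as an observation without proof), and your computation — noting that the preceding corner is the leftmost vertex $v_j$ at the same height, so only the first coordinate changes and the $j(c_{n-1}-c_{n-2})$ terms cancel — is precisely that unfolding.
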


\subsection{Proof of the Simplification}\label{subsec: simplification proof}

We now show that the map $\chi$ (see \autoref{defn: chi}) is well-defined and weight-preserving with respect to $|\beta|_1$ and $|\beta|_2$.  This shows that our definition of $\calF'(\calD_n)$ is, in a sense, equivalent to that of Lee-Schiffler.  

\begin{obs}\label{obs: label sets}
\autoref{lem: corner recursion} states that the values of $\pi_n$ on the northwest corners of $\calD_n$ lie in the set $\{0,1,\dots,c_{n-2}-1\}$.  Since $\pi_n$ is injective on the interior of $\calD_n$, this implies that the values of $\pi_n$ of the vertices which are not northwest corners constitute the set $\{c_{n-2},c_{n-2}+1,\dots,c_{n-1}-1\}$.
\end{obs}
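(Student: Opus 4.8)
The plan is to combine the recursive description of the northwest corners of $\calD_n$ with the injectivity of $\pi_n$ on interior vertices, and then finish by a cardinality count. First I would record that, by the proof of \autoref{lem: nw corners}, the northwest corners of $\calD_n$ are exactly the vertices $\mu(w_{x+y})$ for $(x,y) \in V(\calD_{n-1})$, and that such a vertex has coordinate-sum $rx+(r-1)y$, so that \autoref{lem: corner recursion} gives $\pi_n(\mu(w_{x+y})) = \pi_{n-1}(x+y)$. Since \autoref{rem: pi n} applied to $\calD_{n-1}$ shows that $\pi_{n-1}$ takes all its values in $\{0,1,\dots,c_{n-2}-1\}$, it follows that every one of the $c_{n-2}+1$ northwest corners of $\calD_n$ has $\pi_n$-value in $\{0,1,\dots,c_{n-2}-1\}$.

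Next I would upgrade this to an exact description, and then complement. The two extreme northwest corners $v_0 = w_0$ and $v_{c_{n-2}} = w_{c_{n-1}}$ both have $\pi_n$-value $0$, and by \autoref{rem: pi n} the value $0$ is attained at no interior vertex; meanwhile $\pi_n$ is injective on the interior, so the $c_{n-2}-1$ interior northwest corners $v_1,\dots,v_{c_{n-2}-1}$ take $c_{n-2}-1$ distinct nonzero values, all lying in $\{0,\dots,c_{n-2}-1\}$, hence realizing precisely $\{1,\dots,c_{n-2}-1\}$. Thus the set of $\pi_n$-values on northwest corners is exactly $\{0,1,\dots,c_{n-2}-1\}$. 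Since $w_0$ and $w_{c_{n-1}}$ are themselves northwest corners, every vertex of $\calD_n$ that is not a northwest corner lies in the interior, where $\pi_n$ is injective and (again by \autoref{rem: pi n}) takes values in $\{1,\dots,c_{n-1}-1\}$; as the interior northwest corners already occupy $\{1,\dots,c_{n-2}-1\}$, the non-northwest-corner vertices take distinct values in $\{c_{n-2},\dots,c_{n-1}-1\}$. Finally, $\calD_n$ has $c_{n-1}+1$ vertices and $c_{n-2}+1$ northwest corners, hence $c_{n-1}-c_{n-2}$ non-northwest-corner vertices, which matches the cardinality of $\{c_{n-2},\dots,c_{n-1}-1\}$; combined with injectivity, this forces the values of $\pi_n$ on the non-northwest-corner vertices to be exactly $\{c_{n-2},c_{n-2}+1,\dots,c_{n-1}-1\}$.

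The argument is pure bookkeeping, so there is no serious obstacle; the one point requiring care, and the reason the statement is phrased in terms of \emph{the interior} of $\calD_n$, is that $\pi_n$ is not injective on the whole path — it vanishes at both endpoints $w_0$ and $w_{c_{n-1}}$. Hence injectivity may only be invoked on interior vertices, and the two endpoint northwest corners must be accounted for separately, as done above; once this is handled, the count closes immediately.
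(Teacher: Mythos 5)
Your proposal is correct and follows essentially the same route as the paper, which presents this as a short observation: the northwest-corner values lie in $\{0,\dots,c_{n-2}-1\}$ by the recursion for $\pi_n$ (via \autoref{lem: corner recursion} and the correspondence $\mu$ with $V(\calD_{n-1})$), and injectivity of $\pi_n$ on the interior (\autoref{rem: pi n}) forces the remaining vertices to take exactly the complementary values $\{c_{n-2},\dots,c_{n-1}-1\}$. Your added bookkeeping — treating the two endpoint corners with $\pi_n$-value $0$ separately and closing with the cardinality count — is just the detailed version of the same argument and is accurate.
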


The colors of paths in Lee and Schiffler's setting depends on the number of northwest corners that one needs to traverse from the starting endpoint of the path until the slope to the northwest corner was at least the slope of the diagonal.  We also consider the slopes between vertices on $\calD_n$ that are not necessarily northwest corners, which will help us to derive results about the northwest corners.  We in turn use this to determine which vertex $w_{d(i)}$ to the east of a given vertex $w_i$ is the first such that the slope between $w_i$ and $w_{d(i)}$ is at least that of the diagonal.

\begin{defn}\label{defn: d i}
For $0 \leq i < c_{n-1}$, we define
$$d(i) = \min\left(\{j \in \{i+1,i+2,\dots,c_{n-1}\}: s(w_i,w_j) \geq s(0,c_{n-1})\}\right)\,.$$  
\end{defn}

Note that $d(i)$ is well-defined since $s(i,c_{n-1}) \geq s(0,c_{n-1})$ for all $0 \leq i < c_{n-1}$.  We now show some properties of how the functions $d$ and $\mu$ interact.

\begin{cor}\label{cor: vertex form step}
Suppose $w_i$ is a northwest corner of $\calD_n$ for $n \geq 4$. Let $w_i = \mu\left(w_{i'}\right)$ for some $w_{i'} \in \calD_{n-1}$, and let $(x,y) = w_{d(i')}$ in $\calD_{n-1}$ with $d(i')-i' = mc_{n-2} - wc_{n-3}$.  Then we have $w_{d(i)} = ((r-1)x+(r-2)y, x+y)$, and $d(i)-i = mc_{n-1}-wc_{n-2}$.    

If $w_i$ is not a northwest corner, then $w_{d(i)}$ is the northwest corner immediately following $w_i$.   In particular, $d(i) - i \leq r-1$.  
\end{cor}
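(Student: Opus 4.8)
The plan is to prove \autoref{cor: vertex form step} by induction on $n$, leveraging the recursive structure of $\calD_n$ provided by \autoref{lem: vertex Dn recursion} and the coordinate description of the corner map $\mu$ in \autoref{lem: corner map}, together with the slope-versus-$\pi_n$ dictionary of \autoref{rem: pi n}. The base case $n = 4$ is handled directly: by \autoref{obs: small paths} the path $\calD_4 = \calP(c_3 - c_2, c_2) = \calP(r-1, 1)$, and one simply checks the two claimed formulas against the explicit (very short) list of vertices and northwest corners.

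For the inductive step, suppose the statement holds for $\calD_{n-1}$. First I would treat the case where $w_i$ is a northwest corner of $\calD_n$. Write $w_i = \mu(w_{i'})$ with $w_{i'} = (x_0, y_0) \in \calD_{n-1}$. The key translation is: for vertices $w_i, w_j$ of $\calD_n$ that both lie in the image of $\mu$, the slope inequality $s(w_i, w_j) \geq s$ in $\calD_n$ is equivalent, via \autoref{rem: pi n} and \autoref{lem: corner recursion}, to the corresponding slope inequality $s(w_{i'}, w_{j'}) \geq s$ in $\calD_{n-1}$ where $w_{j'}$ is the $\mu$-preimage of $w_j$. I would also need the complementary fact, from \autoref{obs: label sets} and \autoref{obs: corner extension}, that any vertex of $\calD_n$ that is \emph{not} a northwest corner has $\pi_n$-value in $\{c_{n-2}, \dots, c_{n-1}-1\}$, which is strictly larger than the $\pi_n$-value of every northwest corner — so that, starting from a northwest corner $w_i$ (whose $\pi_n$-value lies in $\{0,\dots,c_{n-2}-1\}$), the minimizing index $d(i)$ realizing $s(w_i, w_{d(i)}) \geq s$ can never be a non-corner vertex, hence $w_{d(i)}$ must itself be a northwest corner, i.e.\ $w_{d(i)} = \mu(w_{d(i')})$. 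Combining: $d(i')$ is the first index past $i'$ in $\calD_{n-1}$ with $s(w_{i'}, w_{d(i')}) \geq s$, so by the inductive hypothesis $w_{d(i')} = (x,y)$ with $d(i') - i' = mc_{n-2} - wc_{n-3}$; applying $\mu$ via \autoref{lem: corner map} gives $w_{d(i)} = ((r-1)x + (r-2)y, x+y)$, and computing the index difference using \autoref{lem: nw corners}/\autoref{lem: corner recursion} (which express the index of a corner in terms of $r$ times east-steps plus $(r-1)$ times north-steps among preceding edges) together with $c_n = rc_{n-1} - c_{n-2}$ yields $d(i) - i = mc_{n-1} - wc_{n-2}$, exactly as in the analogous computation in the proof of \autoref{lem: corner recursion}.

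For the second case, where $w_i$ is not a northwest corner of $\calD_n$: let $w_{i''}$ be the northwest corner immediately following $w_i$, so the edges strictly between $w_i$ and $w_{i''}$ are all horizontal and $w_{i''}$ lies directly to the east at the same height, with $i'' - i \leq r-1$ (the maximal run of consecutive east steps in $\calD_n$, bounded using the Christoffel structure / \autoref{lem: corner structure}, is at most $r-1$). Since $w_i$ and $w_{i''}$ have the same $y$-coordinate, $s(w_i, w_{i''}) = 0 < s$; wait — more carefully, $w_{i''}$ is to the east at equal height only if it is the \emph{next} corner, which may be reached by going east then the path turns north; in any case $w_{i''}$ is the first vertex after $w_i$ whose $\pi_n$-value drops back into $\{0,\dots,c_{n-2}-1\}$, while every intermediate vertex keeps the large $\pi_n$-value inherited (via \autoref{obs: corner extension}) from the previous corner, which exceeds $\pi_n(i)$. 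Hence $\pi_n(j) > \pi_n(i)$ for $i < j < i''$ and $\pi_n(i'') < \pi_n(i)$, so by \autoref{rem: pi n} we get $d(i) = i''$, i.e.\ $w_{d(i)}$ is the northwest corner immediately following $w_i$, and $d(i) - i \leq r-1$.

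The main obstacle I anticipate is the second-case bookkeeping: cleanly arguing that a non-corner vertex $w_i$ "picks up" the $\pi_n$-value of the preceding corner plus a positive multiple of $c_{n-2}$ (via \autoref{obs: corner extension}), hence strictly exceeds $\pi_n(i)$ at every vertex up to but not including the next corner, requires care about exactly which vertex is the next corner and that no non-corner vertex in between can have a smaller $\pi_n$-value. The bound $d(i) - i \leq r-1$ then needs the fact that $\calD_n$ has no run of $r$ consecutive east steps, which follows from \autoref{lem: corner structure} (the morphism $\lambda$ produces blocks $E^{r-1}N$ and $E^{r-2}N$, so the longest east-run has length $r-1$). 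Everything else is a routine transfer of the inductive hypothesis through $\mu$ using the identities already established in \autoref{subsec: vertices dyck}.
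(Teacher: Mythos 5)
Your argument is essentially the paper's own: you show $w_{d(i)}$ must be a northwest corner via the $\pi_n$-value dichotomy (\autoref{obs: label sets}, \autoref{lem: nw corners}), identify it with $\mu(w_{d(i')})$ through \autoref{lem: corner recursion} and \autoref{rem: pi n}, read off coordinates and index differences from \autoref{lem: corner map}, and settle the non-corner case by the same label-set reasoning (your treatment of that case, and of the bound $d(i)-i\leq r-1$ via the Christoffel block structure, is in fact slightly more detailed than the paper's). The induction on $n$ you wrap around this is unnecessary—the statement is a direct transfer through $\mu$ given the cited lemmas—but it is harmless, and the proposal is correct.
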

\begin{proof}
By \autoref{obs: label sets}, $w_{d(i)}$ must be a northwest corner.  Thus \autoref{lem: nw corners} implies that $w_{d(i)}$ must be the image of $w_{d(i')}$ under the correspondence between vertices of $\calD_{n-1}$ to northwest corners of $\calD_{n}$.  It is straightforward to check that the distances follow the formula described under this correspondence.  The second claim follows directly from \autoref{obs: label sets}.
\end{proof}

From this, we can determine that the values $d(i)-i$ are of a particular form.  

\begin{lem}\label{lem: mw corners}
For all positive integers $i$, we have $d(i) - i = c_m - wc_{m-1}$ for a unique choice of $2 \leq w \leq r-1$ and $3 \leq m \leq n-2$. 
\end{lem}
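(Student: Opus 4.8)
The plan is to argue by induction on $n$, using \autoref{cor: vertex form step} to relate the behavior of $d$ on $\calD_n$ to that on $\calD_{n-1}$. After a direct check of the smallest cases — where, by \autoref{obs: small paths}, the paths in question are completely explicit and only $m\in\{3,4\}$ occur — the inductive step for $\calD_n$ splits according to whether the vertex $w_i$ is a northwest corner.

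For \emph{existence} of the representation: if $w_i$ is not a northwest corner, then \autoref{cor: vertex form step} tells us that $w_{d(i)}$ is the northwest corner immediately following $w_i$, so $1\le d(i)-i\le r-1$; using $c_2=1$, $c_3=r$, and $c_4=r^2-1$, each $d$ with $1\le d\le r-2$ equals $c_3-(r-d)c_2$ with $2\le r-d\le r-1$, while $d=r-1$ equals $c_4-(r-1)c_3=c_3-c_2$, so $d(i)-i=c_m-wc_{m-1}$ with $m\in\{3,4\}$. If instead $w_i$ is a northwest corner, then (by \autoref{lem: nw corners} the northwest corners of $\calD_n$ are exactly the images under $\mu$ of the vertices of $\calD_{n-1}$) we may write $w_i=\mu(w_{i'})$ for a unique $w_{i'}\in V(\calD_{n-1})$; the inductive hypothesis applied inside $\calD_{n-1}$ gives $d(i')-i'=c_{m'}-w'c_{m'-1}$ for a unique pair with $2\le w'\le r-1$ and $3\le m'\le n-3$, and \autoref{cor: vertex form step} — whose transformation, in view of $c_{k+1}=rc_k-c_{k-1}$, simply replaces $(m',w')$ by $(m'+1,w')$ — yields $d(i)-i=c_{m'+1}-w'c_{m'}$ with $4\le m'+1\le n-2$. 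For \emph{uniqueness}: for fixed $m$ the map $w\mapsto c_m-wc_{m-1}$ is strictly decreasing, so the set $B_m:=\{c_m-wc_{m-1}:2\le w\le r-1\}$ has maximum $c_m-2c_{m-1}$ and minimum $c_m-(r-1)c_{m-1}=c_{m-1}-c_{m-2}$; since $c_m-2c_{m-1}<c_m-c_{m-1}=c_{m+1}-(r-1)c_m=\min B_{m+1}$, the sets $B_3,B_4,\dots$ are pairwise disjoint and increasing, so the index $m$ with $d(i)-i\in B_m$ is unique and then $w$ is determined by $w=(c_m-(d(i)-i))/c_{m-1}$; existence guarantees such an $m$ lies in $[3,n-2]$.

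The step I expect to require the most care is the index bookkeeping in the inductive passage: one must verify that the representation coming from the inductive hypothesis in $\calD_{n-1}$ is carried by \autoref{cor: vertex form step} to precisely the advertised range $3\le m\le n-2$ for $\calD_n$ (the northwest corners contributing $m\ge 4$, the remaining vertices $m=3$, with $m=4$ at the extreme value $d(i)-i=r-1$), and that $d(i)-i$ never exceeds $\max B_{n-2}$ — which, through \autoref{obs: label sets} and the identification of $\mu$ with the corner correspondence, comes down to the fact that the walk to the next northwest corner (or the iteration of $\mu$) never leaves the prefix $\calD_{n-1}\subseteq\calD_n$; the smallest values of $n$ are dispatched by the base case. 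A less inductive alternative would express $d$ through the residues of \autoref{defn: pi n}, noting $\pi_n(i)\equiv i\,c_{n-2}\pmod{c_{n-1}}$, so that $d(i)$ is a first descent of the sequence $\{i\,c_{n-2}\bmod c_{n-1}\}$, and then deduce the claim from the three-distance theorem applied to $\alpha=c_{n-2}/c_{n-1}$, whose continued fraction expansion built from $c_{n-1}/c_{n-2}=r-c_{n-3}/c_{n-2}$ produces exactly the lengths $c_m-wc_{m-1}$ as the gaps; I would nonetheless expect the inductive argument above to be the cleaner route here.
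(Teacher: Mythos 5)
Your proof is correct and follows essentially the same route as the paper: induction on $n$ via \autoref{cor: vertex form step}, splitting on whether $w_i$ is a northwest corner (the non-corner case yielding $m\in\{3,4\}$ via $d(i)-i\le r-1$, the corner case shifting $(m,w)\mapsto(m+1,w)$ through $\mu$), with uniqueness from the disjointness of the value sets $\{c_m-wc_{m-1}\}$ across $m$. Your write-up is in fact more explicit than the paper's about the index bookkeeping and the uniqueness inequality, but the argument is the same.
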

\begin{proof}
We prove this via induction on $n$.  The base case $n=3$ is straightforward to check.  Suppose the statement holds on $\calC_{n-1}$.  Whenever $w_i$ is not a northwest corner of $\calC_n$, then we have $d(i) - i < r$ by \autoref{cor: vertex form step}, so $d(i) - i = c_3 - wc_2$ for some $2 \leq w \leq r-1$ or $d(i) = r-1 = c_4 - (r-1)c_3$.  Otherwise, if $w_i$ is a northwest corner of $\calC_n$, then $w_i = \mu(w_{i'})$ for some $w_{i'} \in \calC_{n-1}$.  By assumption, $d(i')-i' = c_m - wc_{m-1}$ for some appropriate choice of $m,w$.  Applying \autoref{cor: vertex form step}, we can conclude that $d(i)-i = c_{m+1} - wc_{m}$.  

The fact that this representation is unique follows immediately from the fact that, for all $m \geq 2$, we have
$c_m - (r-1)c_{m-1} > c_{m-1} - 2c_{m-2}$.
\end{proof}

We are now ready to establish that the value $t(i) - i$, as given in \autoref{thm: path slope result}, can also be represented as $c_m - wc_{m-1}$ for an appropriate choice of $m,w$.

\begin{proof}[Proof of \autoref{thm: path slope result}]
We prove this via induction on $n$, with the straightforward base case $n=3$.  Fix a northwest corner $v_i = w_j \in \calC_n$, and let $t(i)$ be the minimum positive integer such that $s(v_i,v_{t(i)})\geq s$.  Since $v_i$ is a northwest corner, we have $v_i = \mu(w_{i'})$ for some $w_{i'} \in \calC_{n-1}$.   Applying \autoref{lem: mw corners}, we have that $d(i') - i' = c_m - wc_{m-1}$ for a unique choice of $2 \leq w \leq r-1$ and $3 \leq m \leq n-2$.  By \autoref{rem: pi n}, we have that $t(i) - i = d(i') - i'$.  Thus $t(i) - i$ is of the desired form.
\end{proof}

The following lemma establishes that $\chi$ preserves weights.

\begin{lem}\label{lem: simplified paths}
For all $n \geq 3$, we have
$\beta \in \calF(\calD_n)$ if and only if $\chi(\beta) \in \calF'(\calD_n)$.  Moreover, we have $|\beta|_i = |\chi(\beta)|_i$ for $i = 1,2$.
\end{lem}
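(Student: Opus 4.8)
The plan is to analyze the map $\chi$ piece by piece, treating each type of colored subpath separately, and then verify the global non-overlapping conditions. Recall that $\chi$ leaves blue subpaths and single edges untouched, recolors $(m,w)$-green subpaths as $(m,w)$-brown without changing their edge set, and splits each red subpath $\alpha(i,k)$ into its first edge (the vertical edge from the vertex below $v_i$ up to $v_i$) together with the remaining subpath $\gamma(i,k)$. The first observation is that the edge set covered by $\beta$ is literally preserved by $\chi$: blue, green, and single edges are unchanged, and for a red path the union of its first edge and the remainder is the whole path. This immediately gives $|\beta|_2 = |\chi(\beta)|_2$. For $|\beta|_1$, note that $\alpha(i,k)$ red contributes $k-i$, and after splitting, $\gamma(i,k)$ also contributes $k-i$ while the single edge contributes $0$; green and blue contribute $k-i$ both before and after; single edges contribute $0$ both before and after. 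So $|\beta|_1 = |\chi(\beta)|_1$ as well. These weight statements are the easy part.

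Next I would check that $\chi(\beta)$ actually lies in $\calP'(\calD_n)$-collections and satisfies the overlap conditions defining $\calF'(\calD_n)$, assuming $\beta \in \calF(\calD_n)$. The subtlety is the difference between the two overlap conventions: Lee--Schiffler forbid $i \ne k'$ and $i' \ne k$ for $\alpha(i,k), \alpha(i',k')$ (i.e., subpaths cannot abut at northwest corners), whereas $\calF'(\calD_n)$ allows two subpaths to share a vertex only if at least one is a single edge. When we split a red path into its first edge plus $\gamma(i,k)$, the single edge and $\gamma(i,k)$ share the vertex $v_i$, which is permitted precisely because one of them is a single edge. I would check that no \emph{other} new vertex-sharing is introduced: the first edge of a red path is a vertical edge ending at $v_i$, and by the Lee--Schiffler condition no subpath of $\beta$ had endpoint $v_i$, so the only thing touching $v_i$ after splitting is the new pair, which is allowed. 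Here I would use \autoref{obs: label sets} / \autoref{lem: nw corners} to confirm the vertex below $v_i$ is interior (not a northwest corner) so the vertical first edge genuinely exists inside $\calD_n$ whenever $v_i \ne v_0$; the case $i=0$, where $v_0$ has nothing below it, needs to be handled — in Lee--Schiffler's setup a red path starting at $v_0$ would be degenerate, and I expect it simply cannot occur, which should follow from $d(0) - 0$ being of the form $c_m - wc_{m-1}$ with the green case covering $w \le r-2$ and \autoref{lem: mw corners}.

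The real content is matching the ``brown prefix'' condition with the ``green prefix'' condition. In $\calF(\calD_n)$, an $(m,w)$-green subpath $\alpha(i,k)$ requires one of the $c_{m-1} - wc_{m-2}$ edges preceding $v_i$ to be covered; in $\calF'(\calD_n)$, an $(m,w)$-brown subpath requires one of the $c_{m-1} - 2c_{m-2}$ edges preceding $v_i$ to be covered. These numbers differ, so the conditions are not literally the same — this is the heart of the matter and where \autoref{thm: path slope result} (equivalently \autoref{lem: mw corners}) is used. The point is a reindexing: a subpath that is $(m,w)$-green in Lee--Schiffler's labeling (with $1 \le w \le r-2$, using $t(i) - i = i + c_m - wc_{m-1}$ in their convention) corresponds under the shifted slope condition $s(v_i,v_t) > s$ versus $\ge s$ to being $(m',w')$-brown for the $m',w'$ produced by \autoref{thm: path slope result}, and I would show $c_{m-1} - wc_{m-2}$ (old) equals $c_{m'-1} - 2c_{m'-2}$ (new) under this correspondence, so the two prefix-length requirements coincide. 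I expect this to reduce to the identity $c_m - wc_{m-1} = $ (the common value of $t(i)-i$) together with the recurrence $c_n = rc_{n-1} - c_{n-2}$, exactly as in the uniqueness argument at the end of the proof of \autoref{lem: mw corners}. Finally, I would argue $\chi$ is a bijection by exhibiting the inverse: merge each single edge that is a vertical edge immediately below the left endpoint of a brown or blue subpath back into that subpath, recolor brown as green, leave everything else fixed; checking this is well-defined (a single edge can be ``absorbed'' in at most one way, since at most one subpath can start at the northwest corner above it) and two-sided inverse to $\chi$ is routine. The main obstacle is the prefix-length bookkeeping in the green-to-brown step; everything else is direct verification.
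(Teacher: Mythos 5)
Your overall route --- splitting by color, tracking the two overlap conventions, and reducing everything to the red case plus a reindexing of the green/brown labels --- is essentially the paper's, which records only the key observation: by \autoref{thm: path slope result} a non-blue $\alpha(i,k)$ is either green or satisfies $t(i)-i=1$ (the red case), and in the red case the split-off single edge is exactly the one edge whose coverage the brown prefix condition demands, while conversely that edge can only be covered by a single edge because of the no-shared-vertex rule. Your treatment of the weights, of the permitted vertex-sharing created by the split, and of the impossibility of a red path starting at $v_0$ (which follows simply from every vertex of $\calD_n$ lying weakly below the diagonal, rather than from the form of $d(0)$) is correct and in fact more detailed than the paper's own argument.

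Two points in your plan need repair. First, the identity you propose to prove --- that the green prefix count $c_{m-1}-wc_{m-2}$ equals $c_{m'-1}-2c_{m'-2}$ for the corresponding brown label --- is false whenever $w\geq 3$. The correspondence of labels is $(m,w)\mapsto(m,w)$ for $2\leq w\leq r-2$ and $(m,1)\mapsto(m+1,r-1)$, and the ``$2$'' in the displayed definition of $\calF'(\calD_n)$ must be read as ``$w$'', i.e., the count is $a_{m-1,w}=c_{m-1}-wc_{m-2}$, as used in \autoref{cor: shadow shape}, \autoref{lem: brown compatibility}, and \autoref{sec: further directions}. With that reading the counts agree trivially for $w\geq 2$, and for $w=1$ they agree via $c_{m}-(r-1)c_{m-1}=c_{m-1}-c_{m-2}$; so your plan goes through once you equate the right quantities, but a literal execution of the stated identity would fail for $r\geq 5$. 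Second, your proposed inverse over-merges: it would absorb a preceding single edge into any brown or blue subpath, whereas $\chi$ only splits red paths, so only the $(3,r-1)$-brown subpaths (those with $t(i)-i=1$) should reclaim the vertical edge below their left endpoint; a green-origin brown path whose prefix condition happens to be witnessed by a single edge, and any blue path, must be left alone. Neither issue changes the substance of the argument, but both would derail the proof as written.
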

\begin{proof}[Proof of \autoref{lem: simplified paths}]
Fix $\beta \in \calF(\calD_n)$.  Suppose $\alpha(i,k) \in \beta$.  By \autoref{thm: path slope result}, either we have that $\alpha(i,k)$ is $(m,w)$-green, or we have that $t(i) - i = 1$.  In this case, condition $(2^*)$ of \autoref{def: brown path} enforces that the edge immediately preceding $\alpha(i,k)$ is contained in $\beta_j$.  By the non-overlapping condition for membership in $\calF(\calD_n)$, we have $\beta_j \neq \alpha(i',k')$ for any $i',k'$.  Thus $\beta_j = \alpha_i'$ for some $i'$.  In particular, it does not contribute to $|\beta|_1$ and contributes $1$ to $|\beta|_2$, which is the same as if we had considered $\alpha(i,k)$ to contain this preceding edge. 
\end{proof}

We can now combine the results about the map $\chi$ in order prove the expansion formula in our setting.

\begin{proof}[Proof of \autoref{cor: LS modified expansion}]
This modified expansion formula follows immediately from \autoref{thm: path slope result} and the expansion formula (\autoref{thm: LS expansion}) given by Lee and Schiffler \cite{LS}.
\end{proof}

We now furthermore discuss a generalization of \autoref{cor: LS modified expansion} to the setting of the \emph{framed $r$-Kronecker cluster algebra with principal coefficients}.  While there is a more general theory of cluster algebras with coefficients (see, for example, \cite{FZ4}), we will give a brief explicit description of this cluster algebra here.  For a positive integer $r$, initial cluster variables $X_1,X_2$ and coefficient variables $Y_1,Y_2$, we consider the sequence $\{\widetilde{X}_n\}_{n \in \Z}$ and $\{\widetilde{Y}_n\}_{n \in \Z}$ defined by

\begin{align*}
\widetilde{Y}_{n+1} &= \frac{\widetilde{Y}_n^r}{\widetilde{Y}_{n-1}}\,, \text{ where } \widetilde{Y}_1 = Y_1 \text{ and } \widetilde{Y}_2 = Y_1^rY_2,\;\\
\widetilde{X}_{n+1} &= \frac{\widetilde{X}_n^r + \widetilde{Y}_{n-1}}{\widetilde{X}_{n-1}}\,,\text{ where } \widetilde{X}_1 = X_1 \text{ and } \widetilde{X}_2 = X_2\,.
\end{align*}

The use of tildes is to distinguish the settings with and without coefficients.  Let $\mathbb{P}$ be the tropical semifield $\Trop[Y_1,Y_2]$.  The \emph{framed $r$-Kronecker cluster algebra $\widetilde\calA(r,r)$ with principal coefficients} is the $\Z\mathbb{P}[\widetilde{X}_1,\widetilde{X}_2]$ algebra generated by the cluster variables $\{\widetilde{X}_n\}_{n \in \Z}$.  Note that when we specialize to the case $Y_1 = Y_2 = 1$, we recover the classical $r$-Kronecker cluster algebra.

\begin{cor}\label{cor: LS modified expansion coeffs}
 Consider the framed $r$-Kronecker cluster algebra $\widetilde\calA(r,r)$ with principal coefficients, having initial cluster variables $X_1,X_2$ and coefficient variables $Y_1,Y_2$.  For $n \geq 4$, the cluster variable $\widetilde{X}_n$ is given by
 $$\widetilde{X}_n = X_1^{-c_{n-1}}X_2^{-c_{n-2}} \sum_{\beta \in \calF'(\calD_n)} X_1^{r|\beta|_1}X_2^{r\left(c_{n-1} - |\beta|_2\right)}Y_1^{|\beta|_2}Y_2^{|\beta|_1}$$
 and 
 $$X_{3-n} = X_2^{-c_{n-1}}X_1^{-c_{n-2}} \sum_{\beta \in \calF'(\calD_n)} X_2^{r|\beta|_1}X_1^{r\left(c_{n-1} - |\beta|_2\right)}Y_2^{-|\beta|_2}Y_1^{-|\beta|_1}\,.$$
\end{cor}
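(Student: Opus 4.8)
\textbf{Proof proposal for \autoref{cor: LS modified expansion coeffs}.}

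The plan is to reduce this coefficient-aware expansion to the coefficient-free formula \autoref{cor: LS modified expansion} via the standard separation-of-additions philosophy of Fomin--Zelevinsky. The first step is to record what \autoref{cor: LS modified expansion} already gives us: a combinatorial formula for the $F$-polynomial $F_n$ of the cluster variable $X_n$, obtained by specializing $X_1 = X_2 = 1$ in the modified Lee--Schiffler expansion (equivalently, the terms of the expansion are already indexed by $\beta\in\calF'(\calD_n)$ with the $X_1$- and $X_2$-exponents recording $|\beta|_1$ and $|\beta|_2$). By the general theory of cluster algebras with principal coefficients (see \cite{FZ4}), the cluster variable $\widetilde X_n$ in $\widetilde\calA(r,r)$ equals the $F$-polynomial evaluated with the $Y_i$ replaced by the appropriate $\widehat y$-variables, multiplied by the $\mathbf g$-vector monomial $X_1^{g_1}X_2^{g_2}$; so the whole task is to identify the $\widehat y$-variables and the $\mathbf g$-vector explicitly for this rank-2 algebra and check that the substitution reproduces the stated formula.

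Concretely, I would carry out the following steps. (i) Compute the $C$-matrices / $\mathbf g$-vectors for the $r$-Kronecker quiver by reading them off the coefficient recursion $\widetilde Y_{n+1} = \widetilde Y_n^r/\widetilde Y_{n-1}$ with $\widetilde Y_1 = Y_1,\ \widetilde Y_2 = Y_1^rY_2$: this is a linear recursion whose solution is governed exactly by the sequence $c_n$ of \autoref{eqn: cn recurrence}, giving $\widetilde Y_n = Y_1^{a_n}Y_2^{b_n}$ with $(a_n,b_n)$ expressible through the $c$'s. (ii) Identify the $\mathbf g$-vector of $\widetilde X_n$: the leading (Laurent-minimal) monomial in \autoref{cor: LS modified expansion} is $X_1^{-c_{n-1}}X_2^{-c_{n-2}}$ (attained at $\beta=\varnothing$ with $|\beta|_1=|\beta|_2=0$), which pins down the $\mathbf g$-vector. (iii) Form the $\widehat y$-variables $\widehat y_i = y_i\prod X_j^{b_{ji}}$ from the exchange matrix of the $r$-Kronecker quiver, and substitute into the $F$-polynomial. (iv) Match monomials: each $\beta$ contributes $X_1^{r|\beta|_1}X_2^{r(c_{n-1}-|\beta|_2)}$ times a monomial in the $\widehat y_i$; a short bookkeeping check — using that the exponent of $Y_1$ should come out to $|\beta|_2$ and that of $Y_2$ to $|\beta|_1$, consistent with the fact that in the principal-coefficient specialization each east step carries a factor tracking $Y_1$ and each ``step up a northwest corner'' tracks $Y_2$ — shows the substitution yields exactly the claimed expression. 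The formula for $X_{3-n}$ follows by the symmetry $X_1\leftrightarrow X_2$, $Y_1\leftrightarrow Y_2$ together with the sign change in the coefficient exponents coming from the opposite half of the cluster pattern, exactly as in \autoref{cor: LS modified expansion}.

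An alternative, more self-contained route avoids citing the separation formula and instead proves \autoref{cor: LS modified expansion coeffs} directly by induction on $n$ using the exchange relation $\widetilde X_{n+1}\widetilde X_{n-1} = \widetilde X_n^r + \widetilde Y_{n-1}$, mirroring whatever inductive argument underlies \autoref{thm: LS expansion}; here one would need to check that the combinatorial recursion on $\calF'(\calD_n)$ (governed by \autoref{lem: vertex Dn recursion}, which breaks $\calD_n$ into $r-1$ copies of $\calD_{n-1}$ plus a truncated copy) is compatible with the extra $\widetilde Y_{n-1}$ term and the monomials $Y_1^{|\beta|_2}Y_2^{|\beta|_1}$. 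I expect the main obstacle to be precisely this compatibility check in either approach: verifying that the $Y$-exponents $|\beta|_2$ (edges) and $|\beta|_1$ (northwest-corner span) are the combinatorial shadows of the $\widehat y$-substitution requires correctly orienting the $r$-Kronecker exchange matrix and tracking the $\mathbf g$-vector through the recursion without sign or indexing errors — routine in principle, but the place where the argument can most easily go wrong. The cleanest writeup is likely the first route, quoting \cite{FZ4} for separation of additions and doing steps (i)--(iv) as an explicit rank-2 computation.
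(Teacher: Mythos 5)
Your overall strategy is sound and lands close to the paper's, but the execution differs in a way worth noting. The paper does not go through $F$-polynomials or $\widehat y$-variables at all: it assigns the grading $\deg(X_i)=e_i$, $\deg(Y_i)=(-1)^{i+1}re_{3-i}$, quotes \cite[Proposition 6.1]{FZ4} to say $\widetilde X_n$ is homogeneous, computes its degree (the $\mathbf g$-vector, $-c_{n-1}e_1+c_ne_2$ for $n\geq 2$), and then observes that homogeneity forces the $Y$-exponents of each monomial already present in \autoref{cor: LS modified expansion} — the $Y_1^{|\beta|_2}Y_2^{|\beta|_1}$ factors are simply the unique exponents making each term have the right degree. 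Your separation-of-additions route (steps (i)--(iv)) proves the same thing and is legitimate, but it carries more bookkeeping, and one step as stated is off: specializing $X_1=X_2=1$ in the coefficient-free expansion does not produce the $F$-polynomial (it produces a number); to identify $F_n(y_1,y_2)=\sum_\beta y_1^{|\beta|_2}y_2^{|\beta|_1}$ from \autoref{cor: LS modified expansion} you must first set $Y_1=Y_2=1$ in the separation formula and then use that distinct $\widehat y$-monomials specialize to distinct Laurent monomials in $X_1,X_2$ (immediate here since $\widehat y_1^a\widehat y_2^b\mapsto X_1^{-rb}X_2^{ra}$), which is exactly the injectivity your "match monomials" step needs and is the same degree argument the paper uses directly. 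With that point made explicit, your first route is correct; the paper's homogeneity phrasing just gets there with less machinery, and your proposed inductive second route, while feasible, would essentially redo the Lee--Schiffler induction and is unnecessary.
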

\begin{proof}
 Setting $\deg(X_i) = e_i$ and $\deg(Y_i) = (-1)^{i+1} re_{3-i}$, it is known that the cluster variable $\widetilde{X}_n$ is a homogeneous Laurent polynomial by \cite[Proposition 6.1]{FZ4}.  Moreover, this degree is readily calculated from the recurrence relations on \emph{g-vectors} to be $-c_{n-1}e_1 + c_ne_2$ for $n \geq 2$ and $c_{-n + 3}e_1-c_{-n+2}e_2$ for $n < 2$ (see, for example, \cite[Subsection 4.1]{Lin}).  This determines the powers of $Y_1$ and $Y_2$ that must appear in each monomial term, yielding the above expansion formula directly from \autoref{cor: LS modified expansion}.
\end{proof}

\section{Bijection between Compatible Pairs and Colored Subpaths of Dyck Paths}\label{sec: bijection}
In this section, we prove a conjecture of Feiyang Lin that the map $\Phi$, constructed by Lin and described in \autoref{defn: Phi}, is a bijection between the collections of colored subpaths introduced by Lee-Schiffler \cite{LS} and the compatible pairs introduced by Lee-Li-Zelevinsky \cite{LLZ}.  This shows the correspondence between the objects summed over by each set of authors in their expansion formulas for rank-$2$ cluster algebras.  Specifically, we show that Lin's map is a bijection between collections $\beta$ of colored Dyck subpaths in $\calF'(\calD_n)$ with a fixed $|\beta|_1$ and $|\beta|_2$ and compatible pairs on $\calC_n$ consisting of $|\beta|_1$ vertical edges and $(c_{n-1}-|\beta|_2)$ horizontal edges.

\subsection{Compatible Pairs and Lin's Map}
The rank-2 cluster expansion formula given by Lee-Li-Zelevinsky \cite{LLZ} sums over certain subsets of edges of a maximal Dyck path, known as compatible pairs, which we discuss here. We will mainly work over $\calC_n$, though sometimes we work in more generality.  Let $S_1$ be a subset of the vertical edges of a maximal Dyck path $\calP(a,b)$, and let $S_2$ be a subset of the horizontal edges of $\calP(a,b)$.

In order to study compatible pairs, Li, Lee, and Zelevinsky \cite{LLZ} introduced the notion of the ``shadow'' of a set of edges.  While they only defined shadows of subsets of vertical edges, we extend this notion to subsets of horizontal edges as well.  These notions will be used throughout our construction of the bijection between collections of colored Dyck subpaths and compatible pairs.

\begin{defn}\label{def: compatible pair}
For a vertical edge $\nu \in S_2$ with upper endpoint $w$, we define its \emph{local vertical shadow}, denoted $\sh(\nu;S_2)$, to be the set of horizontal edges in the shortest subpath $\overrightarrow{tw}$ of $\calP(a,b)$ such that $|tw|_1 = r|\overrightarrow{tw} \cap S_2|_2$.  Analogously, for a horizontal edge $\eta \in S_1$ with left endpoint $u$, we define its \emph{local horizontal shadow}, denoted $\sh(\eta,S_2)$, to be the set of vertical edges in the shortest subpath $\overrightarrow{ut}$ of $\calP(a,b)$ such that $|ut|_2 = r|\overrightarrow{ut} \cap S_1|_1$.  If there is no such subpath $\overrightarrow{tw}$ or $\overrightarrow{ut}$, respectively, then we define the local vertical (resp., horizontal) shadow to be the entire set of horizontal (resp., vertical) edges in $\calP(a,b)$.

 For $S \subseteq S_i$ where $i \in \{1,2\}$, let $\displaystyle \sh(S;S_i) = \bigcup_{\alpha \in S} \sh(\alpha;S_i)$, and write $\sh(S_i) := \sh(S_i;S_i)$.  
\end{defn}

\begin{obs}\label{obs: shadow length}
It is a straightforward consequence of \autoref{def: compatible pair} that for $S \subseteq S_1$, we have $|\sh(S)| = \min(b,\, r|S|)$.  Similarly, for $S \subseteq S_2$, we have $|\sh(S)| = \min(a,\, r|S|)$
\end{obs}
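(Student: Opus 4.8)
The plan is to prove \autoref{obs: shadow length} by a direct inductive unwinding of the recursive definition of the shadow, combined with the combinatorial structure of $\calP(a,b)$ supplied by \autoref{rem: C to D} and \autoref{lem: corner structure}. First I would treat the case $S \subseteq S_1$ (horizontal edges); the vertical case is symmetric after swapping the roles of $E$ and $N$ and of $a$ and $b$ throughout \autoref{def: compatible pair}, so I would only write out one direction carefully and then remark that the other follows mutatis mutandis.

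\textbf{Reduction to a single edge and additivity.} The first step is to establish the claim when $|S| = 1$, say $S = \{\eta\}$ with left endpoint $u$. By definition $\sh(\eta;S_1)$ is the set of vertical edges on the shortest eastward subpath $\overrightarrow{ut}$ with $|ut|_2 = r|\overrightarrow{ut}\cap S_1|_1$, and if no such $t$ exists the shadow is all of the vertical edges, of which there are $b$. When such a $t$ does exist, the defining equation forces $|\overrightarrow{ut}\cap S_1|_1 = |ut|_2 / r$; since $\eta \in S_1$ lies in $\overrightarrow{ut}$, we have $|\overrightarrow{ut}\cap S_1|_1 \geq 1$, so $|ut|_2 \geq r$, and minimality of $t$ plus the fact that a Christoffel path never has $r$ consecutive east steps without the required north steps (this is exactly the content of the recursive structure in \autoref{lem: corner structure}, since the morphism $\lambda$ produces blocks $E^{r-1}N$ and $E^{r-2}N$) pins down $|ut|_2$. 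One must argue that the smallest valid $t$ has $|ut|_2 = r$ exactly when $\overrightarrow{ut}$ contains no other element of $S_1$, and more generally that each additional $S_1$-edge swept forces exactly $r$ more north steps. This yields $|\sh(\eta;S_1)| = \min(b, r)$ in the singleton case, and the inductive bookkeeping shows $|ut|_2 = r\cdot|\overrightarrow{ut}\cap S_1|_1$ with the count of $S_1$-edges stabilizing, which is the seed of the general statement.

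\textbf{The union step.} For general $S = \{\eta_{i_1},\dots,\eta_{i_k}\} \subseteq S_1$ ordered eastward, the key observation is that the local shadows chain together: the eastward subpath realizing $\sh(\eta_{i_1};S_1)$ ends at a vertex from which the shadow of the next relevant edge begins, and because the defining balance condition $|ut|_2 = r|\overrightarrow{ut}\cap S_1|_1$ is "telescoping" in the east direction, the union $\bigcup_{\eta\in S}\sh(\eta;S_1)$ is a single eastward subpath (possibly wrapping cyclically) whose number of vertical edges is $r|S|$ — unless this would exceed $b$, in which case the shadow saturates to all $b$ vertical edges. I would make this precise by induction on $|S|$: adding the next edge $\eta_{i_{j+1}}$ either lies already inside $\sh(\{\eta_{i_1},\dots,\eta_{i_j}\};S_1)$, in which case (one checks) it contributes nothing new and the previous count already accounts for it, or it lies strictly east, in which case its own local shadow extends the previous one by exactly $r$ more vertical edges. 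The arithmetic $\min(b, r|S|)$ then follows, with the $\min$ accounting for the "no such subpath" clause in \autoref{def: compatible pair} that caps the shadow at the whole path.

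\textbf{Main obstacle.} The delicate point — and where I expect to spend the most effort — is the union/telescoping step: verifying that the local shadows of successive edges of $S$ abut cleanly rather than overlapping in some complicated partial way, and that "sweeping" one more $S_1$-edge always costs exactly $r$ vertical edges and never fewer (which would break the equality) or more (ditto). This requires using the precise fact that $\calC_n$ is a maximal Dyck path with $\gcd(a,b)=1$ so that the Christoffel combinatorics of \autoref{rem: C to D} apply, ensuring the path is "balanced enough" that the recursive minimization in \autoref{def: compatible pair} always terminates at the expected place. Once that structural lemma about abutting shadows is in hand, the cardinality count is a short induction, and the vertical case is obtained by the evident symmetry. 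I would therefore organize the write-up as: (i) singleton case; (ii) a lemma that $\sh(S;S_1)$ is a single (cyclic) eastward subpath whose east-length is minimal subject to containing $r|S|$ vertical edges or else is the whole path; (iii) deduce the cardinality formula; (iv) note the vertical analogue.
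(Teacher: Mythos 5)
Your overall skeleton (singleton case, then a nesting/telescoping lemma for the union, with the $\min$ coming from saturation) is the right shape --- the paper itself gives no argument, calling this a straightforward consequence of \autoref{def: compatible pair} --- but as written the proposal has three concrete problems. First, the Christoffel-word input you lean on is both unnecessary and incorrect: the observation is stated for an arbitrary maximal Dyck path $\calP(a,b)$ (no coprimality, so no $\lambda$-recursion is available), and even on $\calC_n$ the word \emph{does} contain $r$ consecutive east steps (its blocks are $E^rN$ and $E^{r-1}N$; the blocks $E^{r-1}N$, $E^{r-2}N$ you cite belong to $\calD_n$). Nothing about the shape of the path is needed: if the walk defining $\sh(\eta;S)$ terminates at $t$, then $|ut|_2 = r|\overrightarrow{ut}\cap S|_1$ holds by the stopping rule itself, so a terminating local shadow has exactly $r$ vertical edges per swept $S$-edge. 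Termination and nesting are controlled by the deficit $D(t) = r|\overrightarrow{ut}\cap S|_1 - |ut|_2$, which jumps by $r$ at $S$-edges, drops by $1$ at vertical edges, and is additive along concatenations: nesting ($u_j$ swept strictly before balance implies $\sh(\eta_j;S)\subseteq\sh(\eta_i;S)$) follows from $D_i(t)=D_i(u_j)+D_j(t)$ with $D_i(u_j)>0$, and termination when $r|S|\le b$ follows because $D$ decreases in unit steps and a full cycle ends at $r|S|-b\le 0$.

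Second, you treat the cap at $b$ as coming automatically from the ``no such subpath'' clause, but when $r|S|>b$ you must actually show the union is \emph{all} $b$ vertical edges; individual walks can perfectly well terminate early in that regime. The argument is a counting one: if every walk terminated, the maximal local shadows would be pairwise disjoint (by the laminarity above) and would sweep each edge of $S$ exactly once, giving total size $r|S|\le b$; hence for $r|S|>b$ some walk fails to terminate and its shadow is the whole set of vertical edges. Third, be careful which ambient set the shadows are computed against: with your reading $\bigcup_{\eta\in S}\sh(\eta;S_1)$ the formula is false for proper subsets $S\subsetneq S_1$, since an edge of $S_1\setminus S$ swept before balance inflates a local shadow by $r$ --- e.g.\ for $r=2$, the staircase $\calP(4,4)$, $S_1=\{\eta_1,\eta_2\}$, $S=\{\eta_1\}$, the local shadow of $\eta_1$ relative to $S_1$ has four vertical edges, not $\min(4,2)=2$. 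The statement should be read, and proved, with shadows taken relative to $S$ itself, i.e.\ $\sh(S;S)$, which is also how it is applied later (with $S$ equal to the full set $S_1$ or $S_2$). With these repairs your plan does go through.
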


The expansion formula for cluster variables given by Lee, Li, and Zelevinsky has monomials corresponding to compatible pairs on $\calC_n$.  Their expansion formula works in the more general setting of elements of the greedy basis, which contains the cluster variables.  For further details on the greedy basis, see \cite{LLZ}.  We present their formula in the special case of cluster variables.

\begin{thm}{\cite[Theorem 1.11]{LLZ}}\label{thm: LLZ expansion}
For each $n \geq 1$, the cluster variable $X_n$ in $\calA(r,r)$ is given by
$$X_n = x_1^{-c_{n-1}}x_2^{-c_{n-2}} \sum_{(S_1,S_2)}x_1^{r|S_2|}x_2^{r|S_1|}\,,$$
where the sum is over all compatible pairs $(S_1,S_2)$ in $\calC_n$.
\end{thm}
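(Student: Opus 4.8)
The plan is to obtain this as an immediate consequence of the two central results already stated: the simplified Lee--Schiffler expansion \autoref{cor: LS modified expansion} and the bijectivity of Lin's map \autoref{thm: Phi bijectivity}. Both of those are proved in later sections, so at this point the only work is to record that $\Phi$ transports the two statistics $|\beta|_1$ and $|\beta|_2$ correctly onto the edge-count statistics of compatible pairs, and then to dispatch a handful of small values of $n$ by hand.

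First I would pin down the statistic dictionary directly from \autoref{defn: Phi}. Given $\beta\in\calF'(\calD_n)$, the vertical set $\Phi_2(\beta)$ contains exactly one edge $\nu_s$ for each $s$ with $i<s\le k$ ranging over all subpaths $\gamma(i,k)\in\beta$, so $|\Phi_2(\beta)|=\sum_{\gamma(i,k)\in\beta}(k-i)=|\beta|_1$. For the horizontal set, I would use the natural position-preserving correspondence $\alpha_s\leftrightarrow\eta_s$ between the $c_{n-1}$ edges of $\calD_n$ and the $c_{n-1}$ horizontal edges of $\calC_n$ coming from the morphism $\theta$ of \autoref{rem: C to D} (each letter of the Christoffel word of $\calD_n$ contributes exactly one horizontal edge to $\calC_n$). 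Under this correspondence $\Phi_1(\beta)=\{\eta_s:\alpha_s\text{ is uncovered}\}$ has size $c_{n-1}-|\beta|_2$. Hence, writing $(S_1,S_2)=\Phi(\beta)$ and identifying $x_i=X_i$, the monomial $X_1^{r|\beta|_1}X_2^{r(c_{n-1}-|\beta|_2)}$ appearing in \autoref{cor: LS modified expansion} is exactly the monomial $x_1^{r|S_2|}x_2^{r|S_1|}$ attached to $(S_1,S_2)$.

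With that in hand, for $n\ge 4$ the theorem is immediate: \autoref{thm: Phi bijectivity} lets me re-index the sum in \autoref{cor: LS modified expansion} over the compatible pairs of $\calC_n$, and by the previous paragraph the summands match term by term. The remaining cases $n\le 3$ (where \autoref{cor: LS modified expansion} is not stated) I would verify directly: $X_1=x_1$ and $X_2=x_2$ are the base cases, and for $n=3$ one has $\calC_3=\calP(1,0)$ by \autoref{obs: small paths}, whose only compatible pairs are $(\varnothing,\varnothing)$ and $(\{\eta_1\},\varnothing)$, giving $x_1^{-1}(1+x_2^r)=(X_2^r+1)/X_1=X_3$ in agreement with \autoref{eqn: cluster recurrence}; the reflected identity is checked the same way.

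I expect the only genuine subtlety to be bookkeeping: confirming that $\Phi_1$ and $\Phi_2$ really land in the horizontal and vertical edge sets of $\calC_n$ with the claimed cardinalities, which rests on the edge-labeling correspondence between $\calD_n$ and $\calC_n$ induced by $\theta$. The substantive content — that $\Phi$ is a bijection onto compatible pairs, and that the Lee--Schiffler sum computes $X_n$ — is imported wholesale from \autoref{thm: Phi bijectivity} and \autoref{cor: LS modified expansion}. For completeness I would also mention the alternative self-contained route of Lee--Li--Zelevinsky: induct on $n$ using the cluster recursion $X_{n+1}X_{n-1}=X_n^r+1$ and show that the compatible-pair generating function over $\calC_n$ satisfies the same recursion; there the main obstacle is analyzing how compatible pairs of $\calC_{n+1}$ decompose relative to the nested copies of $\calC_n$ and $\calC_{n-1}$ furnished by \autoref{lem: vertex Dn recursion}, which is exactly the delicate part and the reason the derivation above via $\Phi$ is preferable in the present context.
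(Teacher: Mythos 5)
Your main route is circular within this paper's logical structure. The statement you are asked to prove is an imported result, cited from Lee--Li--Zelevinsky; the paper gives no proof of it and instead \emph{uses} it. Specifically, the surjectivity half of \autoref{thm: Phi bijectivity} is established in \autoref{lem: bij inverse} by a counting argument: the sets $\calF'(\calD_n)$ and the compatible pairs on $\calC_n$ are shown to be equinumerous precisely by comparing \autoref{thm: LS expansion} with \autoref{thm: LLZ expansion} under the substitution $X_1=X_2=1$, and then injectivity (from Lin) forces surjectivity. So when you ``re-index the sum in \autoref{cor: LS modified expansion} over compatible pairs using \autoref{thm: Phi bijectivity},'' you are invoking a theorem whose proof in this paper already presupposes the very identity you are trying to establish. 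Your statistic dictionary ($|S_2|=|\beta|_1$, $|S_1|=c_{n-1}-|\beta|_2$) and the small-$n$ checks are fine, but they do not repair this: to make your argument non-circular you would need an independent proof that $\Phi$ is surjective onto compatible pairs (the paper does not supply one), or you would need to prove the compatible-pair generating function identity directly.

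The legitimate self-contained route is the one you relegate to a closing remark: an induction on $n$ via the recursion $X_{n+1}X_{n-1}=X_n^r+1$, analyzing how compatible pairs on $\calC_{n+1}$ decompose with respect to the nested copies of $\calC_n$ and $\calC_{n-1}$ (this is essentially what Lee--Li--Zelevinsky do, in the broader setting of greedy elements). That analysis is the actual content of the theorem, and you correctly identify it as the delicate step --- but you do not carry it out, and the shortcut you prefer instead is unavailable here because of the dependency just described.
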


\begin{figure}
\centering
\includegraphics[width = 4in]{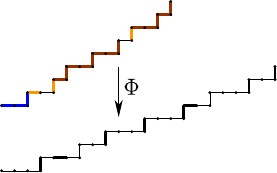}
\caption{The top image depicts a collection of colored Dyck subpaths in $\calF'(\calD_6)$, the same as in \autoref{fig: colored path comparison}.  The bottom image depicts the corresponding compatible pair $(S_1,S_2)$ in $\calC_{6}$, obtained by applying the map $\Phi$ to the collection of colored Dyck subpaths.  The edges that are contained in either $S_1$ or $S_2$ are depicted by bold edges in the lower image.  In particular, we have $S_1 = \{\eta_5,\eta_{15}\}$ and $S_2 = \{\nu_1,\nu_3,\nu_4,\nu_5,\nu_7,\nu_8\}$.  Thus $\wt((S_1,S_2)) = X_1^{18}X_2^{6}$.}
\label{fig: Phi map}
\end{figure}

Lin's map from collections colored Dyck subpaths in $\calD_n$ to compatible pairs in $\calC_n$ is described in \autoref{defn: Phi}.  An example is shown in \autoref{fig: Phi map}.  Lin conjectured that the map $\Phi$ is a bijection between the desired sets \cite[Conjecture 3]{Lin}, which essentially involves showing that $\Phi(\beta)$ is indeed a compatible pair for every $\beta \in \calF'(\calD_n)$.  We verify this claim in the next subsection.  Lin made partial progress toward this conjecture by showing that it was sufficient to consider only compatible pairs satisfying a certain irreducibility condition \cite[Proposition 4.8.4, Conjecture 4]{Lin}.  We proceed by a different approach than Lin, so our methods do not rely on this simplification.

In order to show the correspondence between the Lee-Schiffler and Lee-Li-Zelevinsky expansion formulas, one needs to show not only bijectivity between the sets summed over, but also that the resulting monomials correspond.  Lin defined a weight function for collections of colored subpaths and for compatible pairs that keeps track of the exponents associated to these monomials.

\begin{defn}\label{def: phi weights}
We define the \emph{weight} of a compatible pair $(S_1,S_2)$ by
$$\wt((S_1,S_2)) = X_1^{r|S_2|}X_2^{r|S_1|}$$
and the \emph{weight} of a collection of colored subpaths $\beta \in \calF'(\calD_n)$ by
$$\wt_n(\beta) = X_1^{r|\beta|_1}X_2^{r(c_{n-2}-|\beta|_2)}\,.$$
\end{defn}

Lin showed that $\Phi$ is a weight-preserving map from a superset of $\calF(\calD_n)$ to $\calF(\calC_n)$, and conjectured that it restricted to a bijection between $\calF(\calD_n)$ and $\calF(\calC_n)$.  We prove this in the next subsection.  We convert Lin's map into the setting of $\calF'(\calD_n)$ instead of $\calF(\calD_n)$ in order to make easier reference to the results of the previous section, though it is straightforward to show the equivalence between these two settings.


\subsection{Proof of Bijectivity}
We now proceed to show that Lin's map $\Phi$ indeed takes every collection of colored subpaths in $\calF'(\calD_n)$ to a unique compatible pair on $\calC_n$.  It then follows from the work of Lee-Schiffler \cite{LS} and Lee-Li-Zelevinsky \cite{LLZ} that $\Phi$ is a bijection.  For $2 \leq w \leq r-1$ and $m \geq 3$, we define $a_{m,w}$ to be the quantity $c_{m}-wc_{m-1}$. We can use the quantities $a_{m,w}$ to define the size of images of atomic colored paths under $\Phi$, as well as their shadows. 

\begin{obs}\label{obs: awm identity}
It is readily deduced from the recursive definition of the sequence $c_n$ that for $w,m \geq 1$, we have $ra_{m,w} = a_{m+1,w} + a_{m-1,w}$.
\end{obs}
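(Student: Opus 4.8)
The plan is a one-step computation straight from the defining recurrence \autoref{eqn: cn recurrence}. First I would rewrite $c_{n} = rc_{n-1} - c_{n-2}$ in the symmetric form $c_{n+1} + c_{n-1} = rc_n$, which is the shape in which it will be applied. For the statement to make sense at $m = 1$ (where $a_{1,w} = c_1 - wc_0$ and $a_{0,w} = c_0 - wc_{-1}$), I would first note that the recursion extends $\{c_n\}$ to all integers, giving $c_0 = -1$ and $c_{-1} = -r$; for $m \geq 3$, which is the range in which $a_{m,w}$ is actually used later in \autoref{sec: bijection}, no such remark is needed.

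Then, expanding $a_{m,w} = c_m - wc_{m-1}$ and grouping, I would write
\[
a_{m+1,w} + a_{m-1,w} = (c_{m+1} + c_{m-1}) - w(c_m + c_{m-2}) = rc_m - w\,(rc_{m-1}) = r(c_m - wc_{m-1}) = r\,a_{m,w},
\]
where the middle equality uses the symmetric recurrence twice, once on $c_{m+1} + c_{m-1}$ and once on $c_m + c_{m-2}$. Since the identity is linear in the $c_n$'s and the recurrence is linear, there is no real obstacle; the only thing worth a separate sanity check is the boundary case $m = 1$, which I would confirm by hand from $a_{1,w} = w$, $a_{2,w} = 1$, and $a_{0,w} = rw - 1$, giving $a_{2,w} + a_{0,w} = rw = r\,a_{1,w}$.
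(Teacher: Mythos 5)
Your computation is exactly the intended argument: the paper states this as an immediate consequence of the recurrence $c_{m+1}+c_{m-1}=rc_m$ and gives no further proof, and your two applications of that symmetric form are the whole content. Your extra care about the boundary case $m=1$ (extending the sequence to $c_0=-1$, $c_{-1}=-r$) is a reasonable touch, since the paper's stated range $m\geq 1$ exceeds the range $m\geq 3$ in which $a_{m,w}$ was defined.
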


In order to establish that the conditions for compatibility correspond to the conditions for membership in $\calF'(\calD_n)$ via $\Phi$, we first show that this is true for certain simple colored subpaths.

\begin{defn}
We call a subpath of $\calD_n$ \emph{atomic} if it consists of a single edge, is blue, or is an $(m,w)$-brown path of the form $\gamma(i,i+a_{m,w})$.
\end{defn}

\begin{obs}
Any subpath of $\calD_n$ can be written uniquely as a union of atomic components meeting only at vertices such that 
\begin{enumerate}[(i)]
    \item no component is a single edge unless the entire path is a single edge, and
    \item only the last component can be blue.
\end{enumerate}
This decomposition is obtained by removing minimal brown paths from the front until only a blue path remains.
\end{obs}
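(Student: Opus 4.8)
The plan is to prove existence and uniqueness of the claimed decomposition by induction on $k-i$, the number of northwest corners spanned by $\gamma(i,k)$, with everything resting on a single observation about the coloring. Namely, for a fixed northwest corner $v_i$, \autoref{def: brown path} together with \autoref{thm: path slope result} show that the color of $\gamma(i,k)$ is governed by one threshold depending only on $i$: writing $t(i)-i = a_{m,w}$ with $(m,w)$ the pair from \autoref{thm: path slope result}, the subpath $\gamma(i,k)$ is {\color{blue}blue} exactly when $k < i+a_{m,w}$ and is $(m,w)$-{\color{brown}brown} exactly when $k \ge i+a_{m,w}$, the equality case being precisely when it is atomic. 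Thus each northwest corner $v_i$ is the left endpoint of a unique atomic brown subpath, $\gamma(i,i+a_{m,w})$, whose color pair is determined by $i$ alone. (When $\gamma(i,k)$ is non-blue this threshold is automatically well-defined, as then some slope in the relevant range exceeds $s$; this is all that is used.)

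I would first handle the trivial case of a single edge $\alpha_j$: it is atomic with no interior vertex, so $\{\alpha_j\}$ is its only decomposition into subpaths meeting at vertices, and (i)--(ii) hold vacuously. For a subpath $\gamma(i,k)$ with $i<k$, which is not formally a single edge, condition (i) forces every component of a valid decomposition to be blue or brown, hence a $\gamma$-subpath; since these components partition $\gamma(i,k)$ and meet only at northwest-corner vertices, a valid decomposition amounts to a chain $i=j_0<j_1<\dots<j_\ell=k$ with each $\gamma(j_{p-1},j_p)$ blue or brown and, by (ii), only $\gamma(j_{\ell-1},j_\ell)$ permitted to be blue.

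For existence, if $\gamma(i,k)$ is already atomic there is nothing to do; otherwise it is $(m,w)$-brown with $k>i+a_{m,w}$, and I would strip off the atomic brown prefix $\gamma(i,i+a_{m,w})$ and apply the inductive hypothesis to the strictly shorter remainder $\gamma(i+a_{m,w},k)$ (strictly shorter because $a_{m,w}=c_m-wc_{m-1}\ge c_{m-1}-c_{m-2}\ge 1$), then concatenate; the prepended component is brown and not last, so (i)--(ii) are preserved. For uniqueness, in any valid decomposition with $\ell\ge 2$ the first component $\gamma(i,j_1)$ is not last, hence by (ii) not blue, hence $(m,w)$-brown with the same pair as $\gamma(i,k)$ (the pair depends only on $i$); being atomic it must equal $\gamma(i,i+a_{m,w})$ by the threshold description, forcing $j_1=i+a_{m,w}$, after which $\gamma(j_1,k)$ is decomposed uniquely by induction. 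If $\gamma(i,k)$ is itself atomic, the same reasoning excludes $\ell\ge 2$: the first component $\gamma(i,j_1)$ with $j_1<k\le i+a_{m,w}$ would be blue yet not last, contradicting (ii), so $\ell=1$. Finally, reading the existence step back shows that the decomposition produced is exactly the one obtained by repeatedly removing the minimal brown prefix until the remainder is blue or empty, which is the description in the statement. The only substantive ingredient is the threshold characterization of colors --- this is where \autoref{thm: path slope result} does the work --- and the rest is bookkeeping of the two non-overlap conditions; I do not expect any real obstacle beyond stating that characterization cleanly.
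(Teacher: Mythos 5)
Your argument is correct and is precisely a formalization of the paper's own (unproved) justification: the Observation's final sentence describes the greedy ``strip the minimal brown prefix'' procedure, and your induction on $k-i$ using the threshold characterization of colors (blue iff $k<t(i)$, $(m,w)$-brown iff $k\geq t(i)=i+a_{m,w}$, with $(m,w)$ determined by $i$ alone via \autoref{thm: path slope result}) is exactly what makes that procedure well-defined and its output unique. No discrepancy with the paper's approach.
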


Note that when we decompose a path into its atomic components, adjacent components will necessarily overlap at a vertex.  Thus after this decomposition, the set of paths may no longer be non-overlapping, and hence not in $\calF'(\calD_n)$.  An example is shown in \autoref{fig: atomic decomp}.

\begin{figure}
\centering
\includegraphics[width = 2.5in]{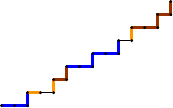}
\caption{The decomposition of the collection $\beta$ of subpaths from \autoref{fig: Phi map} into its atomic components.  Note that $(3,2)$-brown path $\gamma(2,3)$ and blue path $\gamma(3,5)$, which meet at the vertex $v_3$, are the two atomic components of the brown path $\gamma(2,5)$ from $\beta$.}
\label{fig: atomic decomp}
\end{figure}

We now study the structure of the $(m,w)$-brown paths as Christoffel words.  Recall the morphism $\lambda = \{E \mapsto E^{r-1}N, N \mapsto E^{r-2}N\}$.

\begin{lem}\label{lem: brown Christoffel structure}
The Christoffel word for an atomic $(m,w)$-brown path is $\lambda^{(m-2)}(E^{r-w-1} N)$, where $\lambda^0$ is the identity map, which has length $a_{m+1,w}$.
\end{lem}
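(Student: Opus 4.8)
The statement is an explicit identification of the Christoffel word of an atomic $(m,w)$-brown path $\gamma(i,i+a_{m,w})$ with $\lambda^{(m-2)}(E^{r-w-1}N)$, together with a length count. The plan is to induct on $m$, starting from the base case $m=3$, and to propagate the identity using \autoref{lem: corner recursion} and \autoref{lem: nw corners}, which together describe how $\calD_{n-1}$ sits inside $\calD_n$ via the corner map $\mu$.

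\textbf{Base case $m=3$.} Here $a_{3,w} = c_3 - wc_2 = r - w$. An atomic $(3,w)$-brown path starts at a northwest corner $v_i$ and proceeds to $v_{i+r-w}$, and by \autoref{thm: path slope result} (equivalently \autoref{lem: mw corners}) this is the first corner at which the slope condition is reached. First I would use \autoref{obs: small paths} and \autoref{lem: vertex Dn recursion} to see that near any northwest corner, $\calD_n$ locally looks like a run of $E$'s followed by an $N$; more precisely, I would invoke \autoref{cor: vertex form step}, which tells us that from a non-corner vertex the next corner is reached in at most $r-1$ steps, and \autoref{obs: corner extension}, which controls the $\pi_n$-values between consecutive corners. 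Reading off the Christoffel word from $v_i$ to the corner $v_{i+1}$ gives a block $E^{?}N$, and chaining $r-w$ such blocks and using that the brown path must reach slope $\geq s$ exactly at step $r-w$ forces the total word to be $E^{r-w-1}N$ — i.e. the first $r-w-1$ steps leading into the brown path's terminal corner are horizontal and the last is vertical. (The identity map $\lambda^0$ indeed yields $E^{r-w-1}N$, and its length is $r-w = a_{3,w}$; note $ra_{3,w} = a_{4,w} + a_{2,w}$ by \autoref{obs: awm identity}, consistent with the claimed length $a_{m+1,w}$ since for $m=3$ we need length $a_{4,w}$ — wait, this needs care, see below.)

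\textbf{Inductive step.} Assume the Christoffel word of an atomic $(m-1,w)$-brown path of $\calD_{n-1}$ is $\lambda^{(m-3)}(E^{r-w-1}N)$. An atomic $(m,w)$-brown path $\gamma(i,i+a_{m,w})$ of $\calD_n$ has both endpoints at northwest corners, so by \autoref{lem: nw corners} they are images under $\mu$ of vertices $w_{i'}, w_{i'+a_{m-1,w}}$ of $\calD_{n-1}$; moreover \autoref{cor: vertex form step} gives precisely $d(i) - i = a_{m,w}$ corresponding to $d(i') - i' = a_{m-1,w}$, so the subpath of $\calD_n$ between $\mu(w_{i'})$ and $\mu(w_{i'+a_{m-1,w}})$ is exactly the $\mu$-image of the subpath of $\calD_{n-1}$ between $w_{i'}$ and $w_{i'+a_{m-1,w}}$. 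By \autoref{lem: corner structure}, applying the morphism $\lambda$ to the Christoffel word of $\calD_{n-1}$ produces the Christoffel word of $\calD_n$; the key point I would verify is that this morphism acts \emph{locally}, sending the Christoffel factor recording a subpath between two corners of $\calD_{n-1}$ to the Christoffel factor recording the $\mu$-image subpath of $\calD_n$. Granting this, the Christoffel word of the atomic $(m,w)$-brown path is $\lambda$ applied to that of the atomic $(m-1,w)$-brown path, namely $\lambda\bigl(\lambda^{(m-3)}(E^{r-w-1}N)\bigr) = \lambda^{(m-2)}(E^{r-w-1}N)$. The length statement then follows because $\lambda$ multiplies word-length by reading off $|\lambda(E)| = r$, $|\lambda(N)| = r-1$, and tracking the number of $E$'s and $N$'s — concretely the length of $\lambda^{(m-2)}(E^{r-w-1}N)$ satisfies a second-order linear recurrence in $m$ matching that of $a_{m+1,w}$, with the base cases pinned down by $m=3$ (length $a_{4,w}$, since $|\lambda(E^{r-w-1}N)| = r(r-w-1) + (r-1) = r^2 - rw - 1 = r a_{3,w} - 1$... this should be reconciled directly with $a_{4,w} = c_4 - wc_3 = (r^2-1) - w(r) = r^2 - rw - 1$, which checks out).

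\textbf{Main obstacle.} The crux is the \emph{locality} of the morphism $\lambda$: I must show that the correspondence $\calD_{n-1} \to \calD_n$ induced by $\mu$ on corners is compatible with the $\lambda$-substitution on Christoffel words in the strong sense that $\lambda$ maps the factor between two corners of $\calD_{n-1}$ onto the factor between their $\mu$-images in $\calD_n$. This is morally contained in \autoref{lem: corner structure} and its proof (which tracks positions of $N$'s precisely via $\mu$), but turning ``$\lambda$ globally sends $w_n$ to $w_{n+1}$'' into ``$\lambda$ restricted to a Christoffel factor between corners behaves as expected'' requires noting that $\lambda$ is a non-erasing morphism whose images each end in $N$, so corner positions (the $N$'s that are images of vertices of $\calD_{n-1}$) are respected blockwise; I would spell this out using \autoref{lem: nw corners} and \autoref{cor: vertex form step} as the bookkeeping tools. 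Once that locality is established, everything else is the two short inductions above.
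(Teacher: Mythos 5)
Your overall strategy (induct on $m$, pull the brown path back through the corner map $\mu$, and apply the substitution $\lambda$, with the ``locality'' of $\lambda$ as the bookkeeping point) is the same as the paper's. But your base case is wrong, and the error is never repaired. An atomic $(3,w)$-brown path runs from the northwest corner $v_i$ to the northwest corner $v_{i+a_{3,w}}$ with $a_{3,w}=r-w$, so it contains $r-w$ vertical edges; its Christoffel word cannot be $E^{r-w-1}N$, which has a single $N$ and length $r-w=a_{3,w}$ rather than the asserted $a_{4,w}$. (Concretely, for $r=3$ the $(3,2)$-brown path $\gamma(2,3)$ in $\calD_5$ has word $EN=\lambda(N)$, not $N$.) The word $E^{r-w-1}N$ is the Christoffel word of the $\mu$-preimage of the brown path, i.e.\ of the short path one level down from a non-corner vertex to the next corner; the brown path itself has word $\lambda(E^{r-w-1}N)=(E^{r-1}N)^{r-w-1}E^{r-2}N$. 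Your own argument ``chaining $r-w$ blocks of the form $E^{?}N$ \dots forces the total word to be $E^{r-w-1}N$'' is internally incoherent, since chaining $r-w$ blocks each ending in $N$ produces $r-w$ occurrences of $N$; what the slope condition actually forces is that each corner-to-corner block is $E^{r-1}N$ except the last, which is $E^{r-2}N$ --- i.e.\ exactly one application of $\lambda$ to $E^{r-w-1}N$. You flag the tension yourself (``wait, this needs care'') and then in the length bookkeeping silently switch to $\lambda(E^{r-w-1}N)$ of length $a_{4,w}$, so the proposal contradicts its own base case and does not establish the lemma as written. The fix is the paper's: for $m=3$ the preimage vertex $w_{i'}$ with $\mu(w_{i'})=v_i$ is \emph{not} a northwest corner, the preimage path is $E^{r-w-1}N$, and the brown path's word is $\lambda$ of that.

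A secondary gap: in the inductive step you apply the inductive hypothesis to the $\mu$-preimage path, but you never show that this preimage is itself an atomic $(m-1,w)$-brown path. That requires (i) knowing the preimage start is a northwest corner one level down when $m\geq 4$ (this is exactly what fails in the $m=3$ case), and (ii) transferring the slope/brownness condition through $\mu$; \autoref{cor: vertex form step} gives a correspondence for $d$, not for the corner-to-corner function $t$ that defines brownness. The paper supplies this via the identity $s(w_{i'},w_{i'+j})-s(v_0,v_{c_n}) = s(v_i,v_{i+j})-s(v_0,v_{c_{n+1}})$ (from \autoref{lem: corner map} and the recurrence for $c_n$), together with the fact that the edge count $a_{m,w}$ determines $(m,w)$ uniquely. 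Your identification of the locality of $\lambda$ as the main obstacle is reasonable and your proposed resolution (images of $\lambda$ end in $N$, so corners are respected blockwise) is fine, but without the corrected base case and the brownness-transfer argument the induction does not close.
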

\begin{proof}
Let $\gamma(i,k)$ be an atomic $(m,w)$-brown path, and let $\rho$ denote the corresponding Christoffel word.  Fix $i', k' \in \Z$ such that $v_i = \mu(w_{i'})$ and $v_k = \mu(w_{k'})$.  

First note that if $w_{i'}$ is not a northwest corner, then by \autoref{obs: label sets} we have $k'-i' \leq  r$.  Thus we have $m = 3$ and  $\rho = \lambda(E^{r-w-1} N)$ where $r-w-1 = k' - i' - 1$, which is of the desired form.

Now suppose that $w_{i'} = v_{i''}$ is a northwest corner.  We automatically have that $w_{k'} = v_{k''}$ is a northwest corner from \autoref{obs: label sets}.  We aim to show that $\gamma(i'',k'')$ is an $(m-1,w)$-brown path.  Thus the statement would follow from induction, since the Christoffel word corresponding to $\gamma(i,k)$ is given by applying $\lambda$ to the word corresponding to $\gamma(i'',k'')$.   

In order to show that $\gamma(i'',k'')$ is an $(m-1,w)$-brown path, we study the slopes from $s(w_{i'},w_{i' + j})$ for $1 \leq j \leq k'-i' = k-i$.  It readily follows from the recurrence for the sequence $c_n$ and the formula for $\mu$ given in \autoref{lem: corner map} that 
$$s(w_{i'},w_{i'+ j}) - s(v_0,v_{c_{n}}) = s(\mu(w_{i'}),\mu(w_{i'+ j})) - s(v_0,v_{c_{n+1}}) = s(v_i,v_{i+j}) - s(v_0,v_{c_{n+1}})\,.$$
Since $s(v_i,v_{i+j}) - s(v_0,v_{c_{n+1}}) < 0$ for $1 \leq j \leq k-i$ and $s(v_i,v_k) \geq s(v_0,v_{c_{n+1}})$, the same holds for $s(w_{i'},w_{i'+ j}) - s(v_0,v_{c_{n}})$.  That is, the slope from $w_{i'}$ to any vertex on $\gamma(i'',k'')$ does not exceed that of the diagonal, except the slope from $w_{i'}$ to $w_{k'}$.  

We now just need to determine $k'' - i''$, or equivalently, the number of vertical edges in $\gamma(i'',k'')$.  Since $\gamma(i,k)$ as $a_{w,m}$ vertical edges, then we know $\gamma(i'',k'')$ has $a_{w,m}$ total edges.  Since the $a_{w,m}$ uniquely determine $w,m$, we can conclude from the inductive hypothesis that $\gamma(i'',k'')$ is $(m-1,w)$-brown.  Moreover, by \autoref{obs: awm identity}, we have
$$(r-1)a_{w-1,m}+r(a_{w,m}-a_{w-1,m}) = a_{w+1,m}\,,$$
so we can conclude that $\gamma(i,k)$ has length $a_{w+1,m}$.
\end{proof}

We are interested in the portion of the path spanned by the vertical shadow of the image of an $(m,w)$-brown path.  We determine the structure of this portion of the path with the following result.

\begin{cor}\label{cor: shadow shape}
The $a_{m-1,w}$ edges preceding an $(m,w)$-brown path form an $(m-2,w)$-brown path or, when $a_{m-1,w} < r$, a path corresponding to the Christoffel word $E^{a_{m-1,w} - 1}N$.
\end{cor}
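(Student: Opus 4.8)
The plan is to prove the statement by induction on $m$, in parallel with the proof of \autoref{lem: brown Christoffel structure}, using that the morphism $\lambda=\{E\mapsto E^{r-1}N,\,N\mapsto E^{r-2}N\}$ sends the Christoffel word of $\calD_{n-1}$ to that of $\calD_n$ (\autoref{lem: corner structure}). Two quick remarks first. An $(m,w)$-brown path never starts at $v_0$: every vertex of $\calD_n$ lies weakly below the main diagonal through $v_0$, so $s(v_0,v_t)\le s$ for all $t$ and condition $(2^*)$ of \autoref{def: brown path} cannot hold at $v_0$; hence a brown path always has a preceding edge, and since $\lambda$ only lengthens paths one checks (along with the main induction) that it always has at least $a_{m-1,w}$ preceding edges, so the claim is well-posed. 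Also, as $2\le w\le r-1$ we have $a_{2,w}=1$ and $a_{3,w}=r-w\in\{1,\dots,r-2\}$, which are $<r$; for $m\ge 5$ one has $a_{m-1,w}\ge r$ except when $(m,w)=(5,r-1)$, the unique case in which the two alternatives of the statement overlap --- there $a_{4,r-1}=r-1$ and a $(3,r-1)$-brown path has Christoffel word $\lambda(N)=E^{r-2}N=E^{a_{4,r-1}-1}N$, so the two descriptions coincide.

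For the base case $m=3$, the path $\gamma(i,k)$ starts at a northwest corner $v_i$ with $i\ge 1$; since $v_i$ is the leftmost vertex at its height, the edge of $\calD_n$ entering $v_i$ is vertical, so the single ($a_{2,w}=1$) preceding edge has Christoffel word $N=E^{a_{2,w}-1}N$. For the inductive step, fix $m\ge 4$ and an atomic $(m,w)$-brown path $\gamma(i,k)\subseteq\calD_n$. By the argument in the proof of \autoref{lem: brown Christoffel structure}, since $m\ge 4$ the vertex $v_i=\mu(w_{i'})$ is the $\mu$-image of a northwest corner of $\calD_{n-1}$ and $\gamma(i,k)$ is the $\lambda$-image of an atomic $(m-1,w)$-brown path $b\subseteq\calD_{n-1}$. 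Since $\lambda$ commutes with concatenation, the prefix of the Christoffel word of $\calD_n$ ending where $\gamma(i,k)=\lambda(b)$ begins is $\lambda$ applied to the prefix of the Christoffel word of $\calD_{n-1}$ ending where $b$ begins. Now apply the inductive hypothesis to $b$. If $m\ge 5$, the $a_{m-2,w}$ edges preceding $b$ form a path with Christoffel word $\lambda^{(m-5)}(E^{r-w-1}N)$ (an $(m-3,w)$-brown path when $m\ge 6$, or $E^{r-w-1}N$ when $m=5$), so applying $\lambda$ gives $\lambda^{(m-4)}(E^{r-w-1}N)$, which by \autoref{lem: brown Christoffel structure} is the Christoffel word of an atomic $(m-2,w)$-brown path of length exactly $a_{m-1,w}$; hence this block is precisely the $a_{m-1,w}$ edges preceding $\gamma(i,k)$. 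If $m=4$, the edge preceding $b$ is a single vertical edge $N$, so reading backward from $\gamma(i,k)$ the Christoffel word of $\calD_n$ begins with the block $\lambda(N)=E^{r-2}N$ of $r-1$ edges, and since $a_{3,w}=r-w\le r-2<r-1$ the $a_{3,w}$ edges immediately preceding $\gamma(i,k)$ are the last $r-w$ of these, namely $E^{r-w-1}N=E^{a_{3,w}-1}N$. This closes the induction.

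The main subtlety I anticipate is the case $m=4$, where $\lambda$ applied to the single preceding edge overshoots the target count $a_{m-1,w}$ and must be truncated to its last $r-w$ edges; and throughout one must verify carefully --- using that $\gamma(i,k)$, $b$, and their relevant preceding blocks all start at genuine vertices (northwest corners, hence $\lambda$-block boundaries) --- that these $\lambda$-images align with no off-by-one shift, so that ``the $a_{m-1,w}$ edges preceding $\gamma(i,k)$'' really does equal $\lambda$ of ``the $a_{m-2,w}$ edges preceding $b$''.
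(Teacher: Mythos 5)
Your proposal is correct and follows essentially the same route as the paper: an induction on $m$ driven by the morphism $\lambda$ and \autoref{lem: brown Christoffel structure}, showing that the block preceding $\gamma(i,k)$ is the $\lambda$-image of the block preceding the corresponding $(m-1,w)$-brown path one level down. Your treatment of the small cases is in fact slightly more careful than the paper's (which asserts $a_{m-1,w}\le r-1$ for all $m\le 5$, true only for $w=r-1$ when $m=5$): you correctly fold $m=5$ into the inductive step and isolate the genuine truncation issue at $m=4$.
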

\begin{proof}
By definition, the edge preceding an $(m,w)$-brown path is vertical, so the latter statement follows immediately.

We prove the first claim via induction on $m$.  For $m \leq 5$, we note that $a_{m-1,w} \leq r-1$, and hence the preceding path is of the form $E^{a_{m-1,w}-1}N$.  Let $\rho$ denote the Christoffel word formed by the $a_{m-2,w}$ edges preceding an $(m-1,w)$-brown path.  Then by \autoref{lem: brown Christoffel structure}, we can obtain the word corresponding to the $a_{m-1,w}$ edges preceding an $(m,w)$-brown path by applying $\lambda$ to $\rho$.  By the inductive hypothesis and \autoref{lem: brown Christoffel structure}, we can conclude that $\lambda(\rho)$ is an $(m-2,w)$-brown path.
\end{proof}

\begin{cor}\label{cor: shadow size}
Let $\gamma(i,k)$ be an $(m,w)$-brown path in $\calD_n$, and let $(S_1,S_2) = \Phi\left(\{\gamma(i,k)\}\right)$. 
Then the shadow of $S_2$ has length $a_{m+1,w} + a_{m-1,w}$.
\end{cor}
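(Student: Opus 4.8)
The plan is to reduce the assertion to the cardinality count in \autoref{obs: shadow length} together with one elementary inequality on the sequence $\{c_j\}$. Here $\gamma(i,k)$ is the \emph{atomic} $(m,w)$-brown path, so $k-i=a_{m,w}$; consequently \autoref{defn: Phi} gives $S_2=\Phi_2(\{\gamma(i,k)\})=\{\nu_{i+1},\dots,\nu_k\}$, a set of exactly $a_{m,w}$ vertical edges of $\calC_n=\calP(c_{n-1},c_{n-2})$. Applying \autoref{obs: shadow length} then yields $|\sh(S_2)|=\min(c_{n-1},\,ra_{m,w})$, and \autoref{obs: awm identity} rewrites $ra_{m,w}=a_{m+1,w}+a_{m-1,w}$. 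So it remains only to check that $ra_{m,w}\le c_{n-1}$, which forces $|\sh(S_2)|=ra_{m,w}$.

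For that inequality I would record two monotonicity properties of $a_{j,w}=c_j-wc_{j-1}$. First, $a_{j,w}$ is non-increasing in $w$, since $c_{j-1}\ge 0$. Second, since a brown path requires $2\le w\le r-1$ and hence $r\ge 3$, substituting $c_{j+1}=rc_j-c_{j-1}$ gives $a_{j+1,w}-a_{j,w}=(r-1-w)c_j+(w-1)c_{j-1}\ge 0$, so $a_{j,w}$ is non-decreasing in $j$. Since the defining constraints on an $(m,w)$-brown path are $3\le m\le n-2$ and $2\le w\le r-1$, these two facts give $a_{m,w}\le a_{n-2,2}=c_{n-2}-2c_{n-3}$, whence
\[
ra_{m,w}\le r(c_{n-2}-2c_{n-3})=(rc_{n-2}-c_{n-3})-(2r-1)c_{n-3}=c_{n-1}-(2r-1)c_{n-3}\le c_{n-1},
\]
using $c_{n-1}=rc_{n-2}-c_{n-3}$ and $c_{n-3}\ge 0$. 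Therefore $|\sh(S_2)|=ra_{m,w}=a_{m+1,w}+a_{m-1,w}$, as desired.

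I do not expect a genuine obstacle: the only ingredients beyond bookkeeping are the monotonicity of $a_{j,w}$ in $j$ and the final one-line estimate, both immediate from the recursion $c_{j+1}=rc_j-c_{j-1}$. The one point worth flagging at the outset is that $\gamma(i,k)$ must be taken atomic, so that $|S_2|=a_{m,w}$; for a non-atomic $(m,w)$-brown path one has instead $|\sh(S_2)|=\min(c_{n-1},r(k-i))$, and it is precisely the atomic decomposition set up in the surrounding observations that lets us treat one atomic piece at a time.
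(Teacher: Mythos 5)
Your proposal is correct and follows essentially the same route as the paper: reduce to \autoref{obs: shadow length}, rewrite $ra_{m,w}=a_{m+1,w}+a_{m-1,w}$ via \autoref{obs: awm identity}, and verify $ra_{m,w}\leq c_{n-1}$ from the bounds $3\leq m\leq n-2$, $2\leq w\leq r-1$ (the paper bounds $a_{m+1,w}+a_{m-1,w}\leq(c_{m+1}-2c_m)+c_{m-1}\leq c_{m+1}\leq c_{n-1}$ directly, while you use monotonicity of $a_{j,w}$ in $j$ and $w$; both are immediate from the recursion). Your flag that $\gamma(i,k)$ should be read as atomic, so that $|S_2|=a_{m,w}$, matches the paper's implicit usage (its proof invokes \autoref{lem: brown Christoffel structure} for atomic brown paths), so there is no gap.
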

\begin{proof}
By \autoref{lem: brown Christoffel structure}, it follows that $S_2$ consists of $a_{m,w}$ consecutive vertical edges.  Thus, by \autoref{obs: shadow length}, the shadow will contain $\min(ra_{m,w},c_{n-1})$ horizontal edges.  Applying \autoref{obs: awm identity}, we see that $ra_{m,w}= a_{m+1,w} + a_{m-1,w}$.  We then have by the bounds on $w$ and $m$ that
$$ra_{m,w} = a_{m+1,w} + a_{m-1,w} \leq (c_{m+1} - 2c_{m}) + c_{m-1} \leq c_{m+1} \leq c_{n-1}\,.$$
So we can conclude the length of the shadow is $a_{m+1,w} + a_{m-1,w}$.
\end{proof}

We can now establish that the image under $\Phi$ of an atomic $(m,w)$-brown path is a compatible pair in $\calC_n$.  As we will later see, this encodes most of the complexity of the compatibility conditions on $\calC_n$.

\begin{lem}\label{lem: brown compatibility}
Let $\gamma(i,k)$ be an atomic $(m,w)$-brown path in $\calD_n$ and $\gamma_j$ be one of the $a_{m-1,w}$ edges preceding $v_i$.  Then $\Phi(\{\gamma(i,k),\gamma_j\})$ is a compatible pair.
\end{lem}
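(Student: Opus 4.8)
\textbf{Proof proposal for Lemma~\ref{lem: brown compatibility}.}
The plan is to unwind the definition of $\Phi$ on the two-element collection $\{\gamma(i,k),\gamma_j\}$, identify the resulting pair $(S_1,S_2)$ explicitly, and then verify the compatibility condition from \autoref{def: compatible pair} directly, edge by edge. By \autoref{defn: Phi}, the set $S_2$ consists of exactly the vertical edges $\nu_s$ of $\calC_n$ corresponding to indices $i < s \le k$, and by \autoref{lem: brown Christoffel structure} together with \autoref{rem: C to D}, these form a block of $a_{m,w}$ consecutive vertical edges of $\calC_n$; the set $S_1$ consists of all horizontal edges of $\calC_n$ \emph{except} the one corresponding to $\gamma_j$ (and except those absorbed into the blue/brown structure), so that the only constraint we must check comes from the single horizontal edge that is \emph{removed}, namely the one preceding the image of $\gamma_j$. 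Concretely, compatibility requires that for every horizontal edge in $S_1$ with left vertex $u$ and every vertical edge in $S_2$ with top vertex $w$, the subpath $\overrightarrow{uw}$ contains an interior lattice point $t$ witnessing one of the two arithmetic conditions. I would split into the case where $u$ lies before the brown block (the interesting case) and the case where $u$ lies after it (where the witness is easy, e.g.\ $t = w$'s neighbor, or the condition holds vacuously by a length count).

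The key computation is the following. Fix a vertical edge of $S_2$ with top vertex $w$, say the $p$-th one in the block ($1 \le p \le a_{m,w}$), and a horizontal edge of $S_1$ with left vertex $u$ lying to the west of the block. I claim the natural witness is $t = v_i$, the northwest corner at the start of the brown path (pulled over to $\calC_n$ via the morphism $\theta$): by \autoref{cor: shadow size} the vertical shadow of the full block $S_2$ has length exactly $ra_{m,w} = a_{m+1,w} + a_{m-1,w}$, so the horizontal segment $\overrightarrow{v_i w}$ inside $\calC_n$ has exactly $r$ times as many horizontal edges as $\overrightarrow{v_i w}\cap S_2$ has vertical edges, i.e.\ $|\,t w\,|_1 = r\,|\overrightarrow{tw}\cap S_2|_2$ holds with $t = v_i$ — \emph{provided} $v_i \ne u$ and $v_i$ is genuinely interior to $\overrightarrow{uw}$. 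The point $v_i$ fails to be a valid interior witness precisely when $u$ is $v_i$ itself or lies strictly between $v_i$ and $w$; but $v_i$ is the left endpoint of the brown block, so the only horizontal edges between $v_i$ and $w$ are those absorbed into the brown Christoffel word $\lambda^{(m-2)}(E^{r-w-1}N)$, and by the second condition one checks $|v_i\, t'|_2 = r\,|\overrightarrow{v_i t'}\cap S_1|_1$ holds at the relevant breakpoints inside the block itself — this is exactly the statement that an atomic brown path ``balances'' its own shadow, which is the content encoded in \autoref{cor: shadow shape} and \autoref{cor: shadow size}. Finally, the case $u = $ left vertex of $\gamma_j$ is where membership of $\gamma_j$ among the $a_{m-1,w}$ preceding edges matters: here one uses \autoref{obs: awm identity} and the length bound from \autoref{cor: shadow size} to produce the witness $t$ just past the end of the block, where $|ut|_2 = r|\overrightarrow{ut}\cap S_1|_1$ by a direct count of the $a_{m-1,w} + a_{m,w}$ edges involved.

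The main obstacle I anticipate is the bookkeeping at the \emph{left boundary} of the brown block: one must show that for \emph{every} choice of $w$ in the block and \emph{every} horizontal $u \in S_1$ to its west, some valid interior $t$ exists, and the candidate $t = v_i$ can be disqualified for being non-interior when $u$ is close to the block. Handling this cleanly will require either an induction on $m$ mirroring the inductive structure of \autoref{lem: brown Christoffel structure} and \autoref{cor: shadow shape}, or a careful use of the fact (from \autoref{rem: pi n} and \autoref{obs: corner extension}) that the partial sums $\pi_n$ along $\calC_n$ certify exactly which subpaths have slope at least $s$, hence exactly where the balancing breakpoints occur. I would try the inductive route first: push the pair $(\gamma(i,k),\gamma_j)$ down to $\calD_{n-1}$ via $\mu$, where by \autoref{lem: brown Christoffel structure} the brown path becomes $(m-1,w)$-brown and the preceding edges become an $(m-2,w)$-brown path (by \autoref{cor: shadow shape}), apply the inductive hypothesis there, and then lift the witnessing lattice points back up through the morphism $\lambda$ (equivalently $\theta\circ\lambda\circ\theta^{-1}$ on $\calC$), checking that the defining ratios $r|\cdot|_2$ versus $|\cdot|_1$ are preserved under this morphism — which they are, since $\lambda$ multiplies horizontal counts by $r-1$ or $r-2$ and shifts vertical counts by $1$ in exactly the way recorded in the proof of \autoref{lem: corner structure}.
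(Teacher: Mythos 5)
Your central witness claim is false, and it fails precisely in the situation the lemma is about. Under $\theta$ each of the $a_{m+1,w}$ edges of the atomic brown path contributes exactly one horizontal edge of $\calC_n$, so if $w$ is the top vertex of the last edge of the block $S_2$, then $|v_iw|_1 = a_{m+1,w}$, whereas $r|\overrightarrow{v_iw}\cap S_2|_2 = ra_{m,w} = a_{m+1,w}+a_{m-1,w}$ by \autoref{obs: awm identity}. Hence $t=v_i$ does not satisfy $|tw|_1 = r|\overrightarrow{tw}\cap S_2|_2$; on the contrary, \autoref{cor: shadow size} says the vertical shadow of the block overruns $v_i$ by exactly $a_{m-1,w}$ horizontal edges, namely the $a_{m-1,w}$ preceding edges. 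That overrun is the entire point of the lemma: if your balance claim were correct, $\Phi(\{\gamma(i,k)\})$ alone would already be compatible and the hypothesis on $\gamma_j$ would be superfluous. Relatedly, your reading of where $\gamma_j$ enters is inverted: the horizontal edge corresponding to $\gamma_j$ is \emph{removed} from $S_1$ by $\Phi$, so the pair with $u$ the left vertex of $\gamma_j$ imposes no condition at all; what must be checked are the preceding edges that \emph{remain} in $S_1$, paired with vertical edges of the block whose shadows reach past $v_i$. For such pairs the first condition has no interior witness (the unique zero of $|tw|_1 - r|\overrightarrow{tw}\cap S_2|_2$ lies at or west of $u$), so one must exhibit a witness for the second condition $|ut|_2 = r|\overrightarrow{ut}\cap S_1|_1$ strictly inside the block, and its existence uses exactly the absence from $S_1$ of the block's horizontals and of one preceding edge. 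Your sketch never carries out this count.

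The paper's proof does precisely this missing work, and by a different device than your proposed induction: it encodes the eastern boundary of the local vertical shadows as a staircase path $\upsilon$, identifies (via \autoref{lem: brown Christoffel structure} and \autoref{cor: shadow shape}) the relevant words as $\rho$, $\lambda(\rho)$, $\lambda^2(\rho)$, where $\rho$ is the word of the $a_{m-1,w}$ preceding edges, and computes the vertical gap between $\rho$ and $\lambda(\rho)$ at each preceding horizontal edge: it is $r(a_{m-1,w}-i+1)$ for $i>1$ but only $ra_{m-1,w}-1$ at $i=1$. This yields the sharp conclusion that $(S_1,S_2)$ is compatible if and only if at least one of the $a_{m-1,w}$ preceding edges is excluded from $S_1$, which is exactly what including $\gamma_j$ guarantees for any choice of $\gamma_j$. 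Your inductive fallback (push the configuration down via $\mu$, apply the hypothesis in $\calD_{n-1}$, lift witnesses through $\theta\circ\lambda\circ\theta^{-1}$) is plausible in spirit but is only a statement of intent: as written it neither repairs the false balance claim nor verifies that witnesses of either type are preserved under the lift, so the gap remains.
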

\begin{proof}
Let $S_2 = \Phi_2(\{\gamma(i,k)\})$.  Let $\rho'$ denote path formed by the $a_{m-1,w}$ edges preceding $v_i$, and let $\rho = \sigma(\rho')$.  It follows from \autoref{cor: shadow size} and \autoref{cor: shadow shape} the shadow of $\Phi_2$ spans a path of type $\rho \lambda^2(\rho)$.  

Let $\upsilon$ be the path composed of $a_{m,w}$ north steps and $a_{m-1,w}$ west steps starting from the vertex immediately below $\Phi(v_i)$, defined as follows: for $i \geq 2$, the $i$-th north step of $\upsilon$ is $(i-1)r$ units to the west of the $(i-1)$-st edge in $S_2$.  By definition, $\upsilon$ forms the eastern border of the shadow of each edge of $S_2$ except the last.  Moreover, the position of west steps in $\upsilon$ is determined by the occurrence of subwords $E^{r-1}N$ and $E^rN$ in $\lambda^2(\rho)$.  From the definition of $\lambda$, it follows that Christoffel word corresponding to the $90$ degree clockwise rotation of $\upsilon$ is precisely $\lambda^{-1}(\lambda^2(\rho)) = \lambda(\rho)$. 

\begin{figure}
    \centering
    \includegraphics[width = 6in]{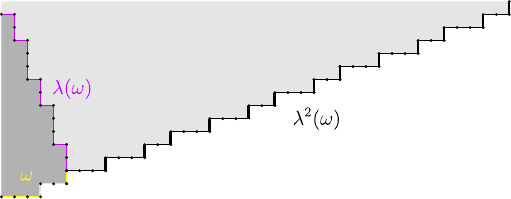}
    \caption{The yellow and black path constitute a portion of $\calD$.  The thick black edges depict those that are contained in $\Phi_2(\gamma)$, where $\gamma$ is a $(6,2)$-brown path.  The shadow of $\Phi_2(\gamma)$ spans the black and yellow paths.  The purple path shows the left endpoint of the shadow of the vertical edge to its right. Note that the yellow and purple paths overlap in one edge.}
    \label{fig: brown shadow}
\end{figure}
We are now interested in the vertical distance from each horizontal edge $\eta_i$ of $\rho$ to that of $\lambda(\rho)$.  This determines the maximum number $M$ of horizontal edges to the right of and including $\eta_i$ that can be included in $S_1$ while satisfying the compatibility conditions.  Namely, this distance is the maximum height of the shadow at $\eta_i$, i.e., $rM$.  Using the same inductive techniques as in \autoref{lem: brown Christoffel structure}, it is readily seen from the base case $\rho = E^wN$ that this distance is $r(a_{m-1,w} - i + 1)$ when $i > 1$.  Hence it is possible that any combination of these edges is contained in $S_1$.  When $i = 1$, the distance is $ra_{m-1,w} - 1$.  Since this is less than $ra_{m-1,w}$ but greater than $r(a_{m-1,w}-1)$, it is not possible that all edges of $\rho$ are contained in $S_1$, but it is possible that all but one are. That is, the pair $(S_1,S_2)$ is compatible if and only if at least one edge of $\rho$ is not contained in $S_1$.  
\end{proof}

\begin{exmp}
\autoref{fig: brown shadow} illustrates the construction in the proof of \autoref{lem: brown compatibility}.  Observe that the purple path is a $90$ degree counterclockwise rotation of a Dyck path.  Letting $\rho = E^2NEN$ denote the Christoffel word for the yellow path, observe that the Christoffel word corresponding to the (90 degree clockwise rotation of the) purple path is given by $\lambda(\rho)$.  Moreover, the Christoffel word corresponding to the black path is given by $\lambda^2(\rho)$.   The vertical distance between the purple and yellow paths is precisely the maximum height of the shadow at each horizontal yellow edge such that the compatibility conditions are satisfied.
\end{exmp}

\begin{lem}\label{lem: atomic single}
Let $\beta$ consist of a single atomic path and, if $\beta$ is $(m,w)$-brown, one of the $a_{m-1,w}$ edges preceding this path.  Then $\Phi(\beta)$ is a compatible pair.
\end{lem}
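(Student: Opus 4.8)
The plan is to do a case analysis on the type of the single atomic path, using the structural results already established for the hardest case. The atomic path is either a single edge, a blue path $\gamma(i,k)$, or an $(m,w)$-brown path $\gamma(i,i+a_{m,w})$ possibly together with one of the $a_{m-1,w}$ edges preceding $v_i$. The brown case is exactly the content of \autoref{lem: brown compatibility}, so nothing remains to be done there. The single-edge case is essentially vacuous: if $\beta = \{\alpha_s\}$ then $\Phi_1(\beta)$ consists of every horizontal edge of $\calC_n$ except those lying ``above'' $\alpha_s$ under the correspondence, and $\Phi_2(\beta) = \emptyset$. Since $S_2 = \emptyset$, the compatibility condition only quantifies over pairs consisting of a horizontal edge of $S_1$ and a vertical edge of $S_2$, and there are no such pairs, so the condition holds trivially. (Alternatively: with $S_2=\emptyset$, every local vertical shadow is empty, so there is no obstruction.)

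The blue case is the one that needs a short argument. If $\gamma(i,k)$ is blue, then by \autoref{def: brown path} we have $s(v_i,v_t) \leq s$ for all $i < t \leq k$. Under $\Phi$, the path $\gamma(i,k)$ contributes its $k-i$ vertical edges to $\Phi_2(\beta) = S_2$ and removes its horizontal edges from $\Phi_1(\beta) = S_1$; in particular $S_1$ and $S_2$ occupy disjoint, ``parallel'' portions of $\calC_n$ determined by the image of $\gamma(i,k)$ under the morphism $\theta$ of \autoref{rem: C to D}. I would first translate the blueness condition $s(v_i,v_t)\le s$ into a statement about the function $\pi_n$ via \autoref{rem: pi n}, and then transfer it to $\calC_n$ using \autoref{rem: C to D} (the word for $\calC_n$ is $\theta$ applied to the word for $\calD_n$). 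The key point is that for a blue subpath, each northwest corner $v_t$ with $i < t \le k$ lies weakly below the diagonal line through $v_i$, which on $\calC_n$ means that the horizontal-edge count and $r$ times the vertical-edge count of the corresponding subpath are suitably ordered; this gives, for every $\eta \in S_1$ with left vertex $u$ and every $\nu \in S_2$ with top vertex $w$, a lattice point $t \ne u,w$ on $\overrightarrow{uw}$ with $|ut|_2 = r|\overrightarrow{ut}\cap S_1|_1$ (taking $t$ to be an appropriate northwest corner inside the blue region, or the endpoint $v_k$). Thus compatibility holds.

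Finally I would assemble the three cases. The main obstacle is the blue case: one must check carefully that the witness lattice point $t$ required by \autoref{def: compatible pair} can always be produced, and in particular that it is distinct from both $u$ and $w$ — this is where the relative primality of $c_{n-1}$ and $c_{n-2}$ (so that only the endpoints of $\calC_n$ lie on the diagonal, noted in \autoref{sec: prelims}) and the definition of $\theta$ enter. Once the blue case is handled, combining it with the trivial single-edge case and the already-proved \autoref{lem: brown compatibility} finishes the lemma.
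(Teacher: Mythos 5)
Your case split (single edge / blue / brown, with the extra preceding edge in the brown case) is the same as the paper's, and two of the three cases are fine: with $S_2=\emptyset$ the compatibility condition in \autoref{def: compatible pair} is vacuous, and the brown case is exactly \autoref{lem: brown compatibility}. The genuine gap is the blue case, which is the only content of this lemma beyond \autoref{lem: brown compatibility}, and your treatment of it is a plan rather than an argument. You assert that blueness, translated through $\pi_n$ and $\theta$, yields for every pair $(\eta,\nu)$ a witness $t\neq u,w$ satisfying the exact equality $|ut|_2 = r|\overrightarrow{ut}\cap S_1|_1$, ``taking $t$ to be an appropriate northwest corner inside the blue region, or the endpoint $v_k$,'' but you never verify this equality, and it is not even clear this is the right disjunct of \autoref{def: compatible pair}: the first edge of $\overrightarrow{uw}$ is $\eta\in S_1$ itself, so the right-hand side is already $r$ one step past $u$, and equality can only be restored after vertical edges accumulate against few $S_1$-edges --- precisely the count, depending on the fine structure of the blue segment (and on the cyclic reading of $\overrightarrow{uw}$ when $u$ lies east of $w$), that a proof must carry out. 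The analogous check shows there is no trivial witness of the other form either. You flag this yourself as ``the main obstacle,'' so as written the lemma is not proved.

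For comparison, the paper closes the blue case without any slope-counting on $\calC_n$: a blue path $\gamma(i,k)$ is a prefix of an atomic $(m,w)$-brown path $\gamma(i,k')$, obtained by extending until the slope from $v_i$ first exceeds $s$. Setting $\beta'=\{\gamma(i,k'),\eta_j\}$ with $\eta_j$ an allowed preceding edge, \autoref{lem: brown compatibility} gives compatibility of $\Phi(\beta')$, and since $\Phi_2(\beta)\subseteq\Phi_2(\beta')$ while the relevant shadow of $\Phi_2(\beta)$ is controlled by that of $\Phi_2(\beta')$, compatibility of $\Phi(\beta)$ is inherited. If you want to keep your direct route, you must actually produce the witness point and prove the required equality (or recast the check in terms of shadows and show the vertical shadow of $S_2$ never traps an $S_1$-edge); otherwise the cheapest repair is this prefix-of-brown reduction.
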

\begin{proof}
We break into three cases based on the form of $\beta$.
If $\beta$ consists of a single edge, then $\Phi(\beta)$ has no vertical edges and hence is compatible.  If $\beta$ is $(m,w)$-brown, then this is precisely the result of \autoref{lem: brown compatibility}.  

Thus, the only remaining case is when $\gamma(i,k)$ is blue.  Then it is the prefix of an atomic $(m,w)$-brown path, obtained by extending $\gamma(i,k)$ until its slope exceeds that of the diagonal.  Let $\gamma(i,k')$ denote this atomic $(m,w)$-brown path, and let $\beta' = \{\gamma(i,k'),\eta_j\}$ where $\eta_j$ is the $(a_{m-1})$-th edge preceding $v_i$.  In the previous case, we have shown that $\Phi(\beta')$ is compatible.  Note now that $\Phi_2(\beta) \subseteq \Phi_2(\beta')$ and $\sh(\Phi_2(\beta)) \subseteq \Phi_1(\beta')$.  Therefore the compatibility of $\Phi(\beta)$ follows directly from the compatibility of $\Phi(\beta')$.
\end{proof}

Now that we have handled the case of atomic paths, we show that we can determine the compatibility of the image of many colored subpaths paths by restricting to the compatibility conditions on each of its atomic components.

\begin{defn}
Given a pair $(S_1,S_2)$ on $\calP(a_1,a_2)$, and let $(S_1',S_2')$ be a pair on $\calP(a_1',a_2')$.  We define the \emph{insertion of $(S_1',S_2')$ into $(S_1,S_2)$ at position $(j_1,j_2)$} to be the pair $(S_1'', S_2'')$ on $\calP(a_1+a_1',a_2+a_2')$ determined as follows:
\[
e_i \in S_k'' \iff \begin{cases}
e_i \in S_k \text{ and } 1 \leq i \leq j_k\,, \text{or}\\
e_{i - j_k} \in S_k' \text{ and } j_k < i \leq j_k + a_k'\,,\text{or}\\
e_{i - a_k'} \in S_k \text{ and } j_k + a_k' < i \leq a_k + a_k'\,,
\end{cases}
\]
for $k \in \{1,2\}$ and $(j_1,j_2) \in V(\calP(a_1,b_1))$.  Here, each $e_i$ refers to the $i^\Th$ horizontal or $i^\Th$ vertical edge of the corresponding path, where the orientation of the edge is determined by the context.
\end{defn}

\begin{lem}\label{lem: pair insertion}
Let $(S_1,S_2)$ and $(S_1',S_2')$ be compatible pairs on $\calP(a_1,a_2)$ and $\calP(a_1',a_2')$, respectively, and suppose that $(S_1',S_2')$ has non-spanning shadows.  Then for any $(j_1,j_2) \in V(\calP(a_1,a_2))$, the insertion $(S_1'',S_2'')$ of $(S_1',S_2')$ into $(S_1,S_2)$ at position $(j_1,j_2)$ is a compatible pair on $\calP(a_1+a_1',a_2+a_2')$.
\end{lem}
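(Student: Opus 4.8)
\textbf{Proof proposal for Lemma~\ref{lem: pair insertion}.}

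The plan is to verify the compatibility condition directly for the inserted pair $(S_1'',S_2'')$ by checking, for every horizontal edge $\eta \in S_1''$ with left vertex $u$ and every vertical edge $\nu \in S_2''$ with top vertex $w$, that the required witness point $t \neq u,w$ exists on the subpath $\overrightarrow{uw}$. The key structural observation is that the insertion operation respects the ``block'' decomposition of $\calP(a_1+a_1',a_2+a_2')$ into three consecutive arcs: the initial segment of $\calP(a_1,a_2)$ up to $(j_1,j_2)$, the inserted copy of $\calP(a_1',a_2')$, and the terminal segment of $\calP(a_1,a_2)$ after $(j_1,j_2)$. Because $\overrightarrow{uw}$ may wrap around cyclically, I would organize the argument by which blocks $u$ and $w$ belong to; by cyclic symmetry this reduces to a manageable number of cases. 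First I would dispense with the cases where both $u$ and $w$ lie in (copies of) the same original path: then the subpath $\overrightarrow{uw}$, and the edges of $S_1'',S_2''$ it meets, are essentially a subconfiguration of the original $(S_1,S_2)$ or $(S_1',S_2')$ (possibly after deleting the inserted block, which only shortens the path in a way that preserves the relevant counting identities because the inserted copy contributes edges to neither set beyond what is inside it), so the witness $t$ from the original compatibility transfers.

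The substantive cases are the ``crossing'' ones, where $u$ lies in one block and $w$ in another, so that $\overrightarrow{uw}$ genuinely traverses a boundary of the inserted copy. Here the hypothesis that $(S_1',S_2')$ has \emph{non-spanning} shadows is exactly what I expect to carry the weight: it guarantees that the contribution of the inserted block to the shadow accounting never saturates, so that a witness point can be found at one of the insertion boundary vertices $(j_1,j_2)$ or $(j_1+a_1',j_2+a_2')$ — or slightly inside one of the original blocks — without the inserted edges forcing a violation. Concretely, for $\eta \in S_1''$ outside the inserted copy and $\nu \in S_2''$ outside it with the inserted block lying between them on $\overrightarrow{uw}$, I would argue that the boundary vertex where $\overrightarrow{uw}$ enters the inserted block serves as a valid $t$: on the $\overrightarrow{ut}$ side the count $|ut|_2 = r|\overrightarrow{ut}\cap S_1''|_1$ or $|tw|_1 = r|\overrightarrow{tw}\cap S_2''|_2$ follows from the original compatibility of $(S_1,S_2)$ restricted to that block (using that the first two blocks, reassembled, reproduce $\calP(a_1,a_2)$). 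When instead $\eta$ or $\nu$ lies \emph{inside} the inserted copy, the witness provided by the compatibility of $(S_1',S_2')$ works, after translating it by the insertion offset $(j_1,j_2)$; the non-spanning hypothesis ensures the relevant shortest subpath stays within the inserted block so no wrap-around complication arises.

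The main obstacle, as I see it, is the bookkeeping in the crossing cases: one must track the three-way split of $\overrightarrow{uw}$ into a tail in one original block, the whole inserted block, and a head in an original block, and show that the defining equation of compatibility at the chosen boundary witness $t$ reduces cleanly to the corresponding equation for $(S_1,S_2)$ — this hinges on the fact that inserting $(S_1',S_2')$ adds $a_1'$ horizontal and $a_2'$ vertical edges but contributes to $S_1''\cap(\text{inserted block})$ and $S_2''\cap(\text{inserted block})$ only the edges of $S_1',S_2'$ themselves, so the counts $|\cdot|_1,|\cdot|_2$ and the $r$-multiples track exactly. I would also need a short sublemma that ``non-spanning shadows'' is preserved in the sense needed, i.e.\ that the inserted block's shadows, viewed inside the larger path, still do not reach around — this is immediate from \autoref{obs: shadow length} since $r|S_i'| < a_i'$ for the non-spanning case. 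Once the boundary vertex is confirmed as a witness in each crossing configuration, assembling the cases gives compatibility of $(S_1'',S_2'')$, completing the proof.
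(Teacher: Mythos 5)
Your overall strategy (direct case analysis on which of the three blocks $u$ and $w$ lie in, producing an explicit witness $t$ in each configuration) is different from the paper's, but as written it has genuine gaps at the two places that carry all the difficulty. First, in the case where $\eta$ and $\nu$ both come from the original pair $(S_1,S_2)$, your claim that the old witness transfers because ``the relevant counting identities are preserved'' is false whenever the inserted block lies strictly inside the relevant subpath: there $|t_0w|_1$ grows by $a_1'$ while $r|\overrightarrow{t_0w}\cap S_2''|_2$ grows by $r|S_2'|$ (and similarly on the $\overrightarrow{ut_0}$ side with $a_2'$ versus $r|S_1'|$), and these increments differ \emph{precisely because} the inserted shadows are non-spanning, so the exact equality defining the old witness is destroyed. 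A new witness does exist, but proving that requires tracking the discrepancy $|tw|_1-r|\overrightarrow{tw}\cap S_2''|_2$, which increases only in unit steps and hence cannot skip zero --- i.e., the shadow machinery, not a transplanted witness. Second, in the mixed case where exactly one of $\eta,\nu$ lies inside the inserted block, neither source you invoke produces the required $t$: compatibility of $(S_1',S_2')$ only concerns pairs with \emph{both} edges in the block, and compatibility of $(S_1,S_2)$ guarantees some witness somewhere on the old path but gives no reason that the defining equality holds at the particular boundary vertex $(j_1,j_2)$ you nominate. This mixed case is exactly where the non-spanning hypothesis must be used quantitatively, and your sketch asserts rather than proves it.

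For comparison, the paper sidesteps witness-chasing entirely: it works with the local shadows of \autoref{def: compatible pair}, which are the canonical witnesses, and shows that under insertion the shadow of any edge grows by strictly less than the inserted block (using \autoref{obs: shadow length} together with the non-spanning hypothesis $r|S_1'|<a_2'$ or $r|S_2'|<a_1'$), while shadows of the remaining edges are unchanged up to the index shift; hence horizontal and vertical shadows still cannot meet and the inserted pair is compatible. If you recast your case analysis in terms of shadow endpoints rather than transplanted or boundary-vertex witnesses, your argument can be repaired along these lines; as it stands, the two central cases are not established.
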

\begin{proof}
Since $(S_1',S_2')$ is a compatible pair on $\calP(a_1',a_2')$, then either $rS_1' < a_2'$ or $rS_2' < a_1'$.  Without loss of generality, suppose $rS_1' \leq a_2'$.  

We can then see that for $e_i \in S_1''$ with $1 \leq i \leq j_1$, we have 
$$|\sh(e_i;S_1'')|\leq |\sh(e_i;S_1)| + \sh(e_1,S_1'')| < |\sh(e_i;S_1)| + a_2'\,.$$
Similarly, for $j_2 + a_2' < i \leq a_2 + a_2'$, we have
$$|\sh(e_i;S_2'')| \leq |\sh(e_{i-a_2'};S_2)| + |\sh(e_{a_2'},S_2'')| \leq |\sh(e_i;S_2)| + a_1'\,.$$
The lengths of the shadows at the other edges, with the corresponding shift in indices, is the same as in the original paths.  Thus the horizontal and vertical shadows will never intersect, so the pair is indeed compatible.
\end{proof}

This allows us to combine our results on atomic paths in order to handle any collection of subpaths in $\calF'(\calD_n)$

\begin{thm}\label{thm: bij forward}
If $\beta \in \calF'(\calD_n)$, then $\Phi(\beta)$ is a compatible pair.
\end{thm}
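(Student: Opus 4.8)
The plan is to reduce the statement to the atomic case (\autoref{lem: atomic single}) and then glue using the insertion operation (\autoref{lem: pair insertion}). Write $\beta = \{\beta_1,\dots,\beta_t\}\in\calF'(\calD_n)$, ordered from left to right, and pass to its atomic decomposition: replace each $\beta_j$ that is not a single edge by its atomic components, producing an ordered list $A_1,\dots,A_p$ of atomic subpaths (minimal $(m,w)$-brown paths, blue paths, and single edges) covering exactly the edges covered by $\beta$. For each brown $A_i$, I single out one edge $e_i$ of $\calD_n$ immediately preceding $A_i$ that is covered by $\beta$: if $A_i$ is the first atomic component of some $\beta_j$, such an edge is provided by the defining condition of $\calF'(\calD_n)$, while if $A_i$ is a later atomic component of $\beta_j$, then $A_i$ is preceded by the last edge of $A_{i-1}$, which is covered.

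By \autoref{lem: atomic single}, each pair $\Phi(A_i)$ --- or $\Phi(\{A_i,e_i\})$ when $A_i$ is brown --- is compatible. Moreover it has non-spanning shadows: trivially when $A_i$ is a single edge, because a blue piece is a prefix of an atomic brown path and so inherits its non-spanning shadow, and for an $(m,w)$-brown piece by \autoref{cor: shadow size}, which gives vertical-shadow length $a_{m+1,w}+a_{m-1,w}\le c_{m+1}\le c_{n-1}$. Using the morphism $\theta$ of \autoref{rem: C to D}, which expands each north step of $\calD_n$ into an $EN$ block of $\calC_n$, I cut $\calC_n$ into consecutive windows: a window for each maximal run of uncovered edges (carrying only horizontal edges, all of which belong to $\Phi_1(\beta)$ and form a trivially compatible pair) and a window for each atomic piece $A_i$, carrying the horizontal edges lying over the edges of $A_i$ and, when $A_i$ is a $\gamma$-path, its vertical edges (which belong to $\Phi_2(\beta)$). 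Inserting the compatible pairs associated with $A_1,\dots,A_p$ into the pair made of the gap windows, in order and at the appropriate lattice positions, \autoref{lem: pair insertion} keeps the result compatible at every step; a comparison with \autoref{defn: Phi} shows the pair so assembled is exactly $(\Phi_1(\beta),\Phi_2(\beta))$, so $\Phi(\beta)$ is a compatible pair.

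The delicate point, and the main obstacle, is making the windows in the last step fit together. One must set up the correspondence between $\calC_n$ and the concatenation of the per-piece windows precisely, check that the insertion positions are lattice points of the partially assembled path, and --- most importantly --- verify that the non-spanning hypothesis of \autoref{lem: pair insertion} genuinely holds for each inserted piece. Since the vertical shadow of an $(m,w)$-brown piece has length $ra_{m,w}=a_{m+1,w}+a_{m-1,w}$, which extends $a_{m-1,w}$ horizontal edges to the left of the piece itself, the window assigned to a brown piece must be chosen generously enough --- absorbing part of the preceding uncovered run --- for its shadow to stay strictly inside. This is exactly where the $\calF'(\calD_n)$ condition is used: the covered edge $e_i$ guarantees, by the analysis carried out in \autoref{lem: brown compatibility}, that not all horizontal edges in the pertinent window belong to $\Phi_1(\beta)$, which is precisely the condition that makes the corresponding local pair compatible. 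Reconciling the window coming from $\calF'(\calD_n)$ with the one demanded by \autoref{lem: pair insertion}, consistently across all the atomic pieces, is the crux of the argument.
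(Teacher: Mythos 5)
Your proposal is correct and takes essentially the same route as the paper: decompose $\beta$ into atomic components, apply \autoref{lem: atomic single} to each piece (with a covered preceding edge for the brown ones), and glue via \autoref{lem: pair insertion}; the paper merely organizes the gluing as an induction that adds the leftmost atomic component at each step rather than assembling all windows at once. The ``crux'' you flag about fitting windows and verifying non-spanning shadows is treated no more explicitly in the paper's own proof, so your write-up is, if anything, slightly more careful on that point.
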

\begin{proof}
Let $(S_1,S_2) = \Phi(\beta)$
We proceed by induction on the number of atomic components in $\beta$, which we denote by $t$.  Note that if $t=0$, then $S_2$ is empty and so $(S_1,S_2)$ is compatible.

If $\beta$ has one atomic component, the compatibility follows directly from \autoref{lem: atomic single}.

If we add an additional singular edge to $\beta$, then the resulting pair is a subset of the original, and hence compatible.  Otherwise, suppose we add an atomic component to $\beta$ that appears to the left of all other atomic components.  Then we can view the resulting path as the insertion of the atomic path into the previous compatible pair.  Since both paths involved in the insertion are compatible, we can conclude using \autoref{lem: pair insertion} that $\Phi(\beta)$ is compatible.
\end{proof}

\begin{lem}\label{lem: bij inverse}
Every compatible pair in $\calC_n$ is the image of some $\beta \in \calF'(\calD_n)$.
\end{lem}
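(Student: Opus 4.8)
The plan is to construct an explicit inverse to $\Phi$, reading off a collection of colored subpaths of $\calD_n$ from the combinatorial data of a compatible pair $(S_1,S_2)$ on $\calC_n$. Recall that $\calC_n$ is obtained from $\calD_n$ by applying the morphism $\theta = \{N \mapsto EN\}$ (see \autoref{rem: C to D}), so each vertical edge $\nu_s$ of $\calC_n$ corresponds to a northwest corner ``slot'' of $\calD_n$, and the horizontal edges of $\calC_n$ not produced by $\theta$ correspond bijectively to the horizontal edges of $\calD_n$. Under this identification, a vertical edge $\nu_s \in S_2$ should force the northwest corner $v_s$ to lie strictly inside some colored subpath $\gamma(i,k)$ with $i < s \le k$, while a horizontal edge $\eta_s \notin S_1$ should force the edge $\alpha_s$ of $\calD_n$ to be covered by some subpath of $\beta$. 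Concretely, I would first partition the set $S_2$ of vertical edges into maximal ``runs'' relative to the brown-path structure: using \autoref{cor: shadow size}, \autoref{cor: shadow shape}, and \autoref{lem: brown Christoffel structure}, the compatibility condition dictates exactly which consecutive blocks of vertical edges of $\calC_n$ can appear, and each such block of size $a_{m,w}$ must come from an $(m,w)$-brown atomic path $\gamma(i,i+a_{m,w})$ in $\calD_n$. I would then extend each brown atomic piece by a maximal blue tail (the vertices after $k$ whose slope from $v_i$ does not exceed $s$) and glue consecutive atomic pieces that share an endpoint back into single blue/brown subpaths, exactly inverting the atomic decomposition described before \autoref{fig: atomic decomp}. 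Finally, every horizontal edge $\eta_s \notin S_1$ that has not already been absorbed into one of these subpaths becomes a single-edge subpath $\alpha_s$.

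The key steps, in order: (1) set up the bijection between edges/corners of $\calC_n$ and edges/corners of $\calD_n$ via $\theta$, so that $S_2$ determines a set of ``covered'' northwest corners and $\{\eta_s \notin S_1\}$ determines a set of covered horizontal edges of $\calD_n$; (2) show that the compatibility of $(S_1,S_2)$ forces $S_2$ to decompose into blocks of sizes $a_{m,w}$, each occurring in a position consistent with an $(m,w)$-brown atomic path --- this is where the shadow computations from \autoref{cor: shadow size} and \autoref{cor: shadow shape} do the work, since the shadow of such a block spans exactly $a_{m+1,w}+a_{m-1,w}$ horizontal edges of $\calC_n$ and its shape pins down the slopes along $\calD_n$; (3) define $\beta$ by replacing each block with the corresponding brown atomic subpath extended by its maximal blue tail, regluing overlapping atomic components, and adjoining the leftover single edges; (4) verify $\beta \in \calF'(\calD_n)$ --- in particular that the brown-path condition ``at least one of the $c_{m-1}-2c_{m-2}$ preceding edges is covered'' holds, which follows from the compatibility condition on $S_1$ established in the second half of the proof of \autoref{lem: brown compatibility} (the left endpoint of the shadow has height $ra_{m-1,w}-1$, forcing at least one preceding horizontal edge to be absent from $S_1$, i.e.\ covered in $\beta$); and (5) check $\Phi(\beta) = (S_1,S_2)$, which is immediate once the construction is seen to undo \autoref{defn: Phi} block by block. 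Injectivity of $\Phi$ (hence that this $\beta$ is the unique preimage) either follows from the explicitness of the construction or can be cited as already part of Lin's analysis together with \autoref{thm: bij forward} and the equinumerosity implied by \autoref{thm: LS expansion} and \autoref{thm: LLZ expansion}.

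I expect the main obstacle to be step (2): showing that an \emph{arbitrary} compatible pair, with no a priori reference to colored paths, must have its vertical-edge set organized into the precise block structure coming from brown atomic paths. The forward direction (\autoref{thm: bij forward}) only needs that images of elements of $\calF'(\calD_n)$ are compatible; here I need the converse rigidity statement, namely that compatibility is restrictive enough to \emph{preclude} any other configuration of vertical edges. The cleanest route is probably a ``greedy/leftmost'' argument: process $\calC_n$ from left to right, and whenever a vertical edge of $S_2$ is encountered that is not yet accounted for, use \autoref{obs: shadow length} together with the Christoffel-word recursion (\autoref{lem: corner structure}, \autoref{lem: brown Christoffel structure}) to show that compatibility with the horizontal edges of $S_1$ forces the next $a_{m,w}-1$ vertical edges (for the appropriate $m,w$) to also lie in $S_2$ and to sit in brown-path position. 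An alternative, possibly shorter, route is to avoid the direct construction entirely: combine \autoref{thm: bij forward} (injectivity plus well-definedness into compatible pairs, with injectivity checked directly from \autoref{defn: Phi}) with the fact that $\calF'(\calD_n)$ and the set of compatible pairs on $\calC_n$ have the same cardinality --- which follows by comparing \autoref{cor: LS modified expansion} and \autoref{thm: LLZ expansion}, since both expand the same Laurent polynomial $X_n$ and, after matching $|\beta|_1 \leftrightarrow |S_1|$ and $c_{n-1}-|\beta|_2 \leftrightarrow |S_2|$, the weight-graded pieces must have equal size provided one also knows $\Phi$ is weight-preserving (which Lin established). If that weight-graded equinumerosity can be invoked, surjectivity is automatic from injectivity and finiteness, and the explicit construction above becomes optional supporting detail rather than the backbone of the argument. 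I would present the explicit construction as the primary proof, since it is more informative, and remark on the counting shortcut as an alternative.
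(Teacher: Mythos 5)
Your ``alternative, possibly shorter, route'' is in fact exactly the paper's proof: surjectivity is deduced from (i) \autoref{thm: bij forward}, (ii) injectivity of $\Phi$, which the paper cites from Lin's Proposition 4.7.3, and (iii) equinumerosity of $\calF'(\calD_n)$ with the set of compatible pairs on $\calC_n$, obtained by setting $X_1 = X_2 = 1$ in \autoref{thm: LS expansion} and \autoref{thm: LLZ expansion}. Note that for this step you do not need the weight-graded refinement at all: injectivity together with equality of total cardinalities of finite sets already forces surjectivity, so your hedge ``provided one also knows $\Phi$ is weight-preserving'' can be dropped (weight-preservation only matters later, for \autoref{thm: Phi bijectivity} itself).

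However, you chose to present the explicit inverse construction as the primary proof, and there the argument has a genuine gap at your step (2). The results you lean on --- \autoref{lem: brown Christoffel structure}, \autoref{cor: shadow shape}, \autoref{cor: shadow size}, and \autoref{lem: brown compatibility} --- all describe the \emph{images} of brown paths under $\Phi$ and their shadows; none of them says anything about an arbitrary compatible pair, so they cannot by themselves ``dictate exactly which consecutive blocks of vertical edges of $\calC_n$ can appear.'' The converse rigidity statement --- that compatibility forces $S_2$ to decompose into runs of sizes $a_{m,w}$ sitting precisely in brown-path positions, with at least one of the $c_{m-1}-wc_{m-2}$ preceding edges absent from $S_1$ --- is the entire content of surjectivity, and your greedy left-to-right sketch does not establish it: one must control interactions between a vertical run and horizontal edges of $S_1$ far to its left, where shadows of distinct runs can interleave, and nothing in the cited lemmas rules out vertical-edge configurations that match no colored subpath. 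So as written the primary route is not a proof; the counting argument you relegate to a remark should be the backbone, which is exactly what the paper does.
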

\begin{proof}
We know from \autoref{thm: bij forward} that $\Phi(\beta)$ for $\beta \in \calF'(\calD_n)$ is a compatible pair in $\calC_n$.  Moreover, we know from \autoref{thm: LS expansion} and \autoref{thm: LLZ expansion} that $\calF'(\calD_n)$ and the set of compatible pairs in $\calC_n$ are equinumerous, since both are the result of the substitution $X_1 = X_2 = 1$.  Lastly, we know from the work of Lin \cite[Proposition 4.7.3]{Lin} that $\Phi$ is injective.  Thus $\Phi$ is also surjective.
\end{proof}

Combining the results proven above along with work of Lee-Schiffler and Lee-Li-Zelevinsky, we can prove the bijectivity of the map $\Phi$.

\begin{proof}[Proof of \autoref{thm: Phi bijectivity}]
By \autoref{thm: bij forward} and \autoref{lem: bij inverse}, we can see that $\Phi$ is a bijection from $\calF'(\calD_n)$  onto the set of compatible pairs in $\calC_n$.  Using the work of Lin \cite[Proposition 4.7.3]{Lin}, we additionally see that $\Phi$ is weight-preserving.  
\end{proof}

\section{Quantization of Colored Dyck Subpaths}\label{sec: quantum}
In Lee and Schiffler's work on expanding rank-$2$ cluster variables, they showed that the coefficients of the Laurent expansion could be obtained by taking sums over certain collections of colored Dyck subpaths.  Each such collection was taken to have weight $1$.  In order to quantize this construction, we instead weight each collection by a power of a formal variable $q$.  We then show that an analogous formula holds for rank-$2$ quantum cluster variables with an appropriate choice of $q$-weights, where setting $q=1$ recovers Lee and Schiffler's formula.

As discussed by Lee-Li-Rupel-Zelevinky \cite[Section 3]{LLRZ}, the combinatorial formula for greedy basis elements of a rank-$2$ cluster algebra cannot be extended to the quantum setting by merely weighting compatible pairs by a power of $q$, since positivity of these elements can fail in general. However, Dylan Rupel \cite[Corollary 5.4]{Rup2} established that the classical rank-$2$ formula for the quantum cluster variables, which are a proper subset of the greedy basis, given by Lee-Li-Zelevinsky \cite{LLZ} could be extended in this way.  We proceed by applying the bijection established in the previous section to Rupel's expansion formula over weighted compatible pairs associated to quantum cluster variables.  

An advantage of \autoref{thm: quantum expansion} is that it requires fewer computations compared to Rupel's formula in \cite[Corollary 5.4]{Rup2}.  Our formula requires $O(|\beta|^2)$ computations, where $|\beta|$ is the number of subpaths in $\beta \in \calF'(\calD_n)$.  Rupel's formula requires $\binom{c_{n-1} + c_{n-2}}{2}$ computations, which is generally much larger.  Moreover, without knowledge of the bijection between collections of colored subpaths and compatible pairs, it is unclear how to generate all compatible pairs.  Na{\"i}vely, one must consider all collections of edges of $\calC_n$ and check that the compatibility condition holds for each pair of edges.  It thus seems more efficient to generate all collections in $\calF'(\calD_n)$ and compute their quantum weights using \autoref{def: quantum weight} than to generate all compatible pairs on $\calC_n$ and compute quantum weights using \cite[Corollary 5.4]{Rup2}.

\begin{figure}
\centering
\includegraphics[width = 3in]{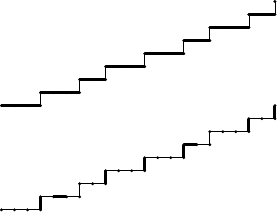}
\caption{The top path depicts the compatible pair on $\calC_6$ obtained by applying $\Phi$ to $\beta_\emptyset$ on $\calD_6$, which has $S_1 = \{\eta_1,\eta_2,\dots,\eta_{21}\}$ and $S_2 = \emptyset$.  The bottom path depicts the compatible pair obtained by applying $\Phi$ to the collection of colored Dyck subpaths shown in \autoref{fig: Phi map}.}
\label{fig: quantum weight}
\end{figure}

In our proof of \autoref{thm: quantum expansion}, we translate each compatible pair into a finite word so that we can refer to the language of combinatorics on words.  We will work over the alphabet $A = \{h,v,H,V\}$, with $A^*$ denoting the set of finite words on $A$.  For the purposes of this section, we represent a compatible pair by a word in $A^*$.  The letters $h$ and $H$ (resp., $v$ and $V$) represent horizontal (resp., vertical) edges, with the capital letter denoting those edges in $S_1$ (resp., $S_2$).  

\begin{exmp}\label{exmp: compatible word}
The word in $A^*$ corresponding to the top compatible pair in \autoref{fig: quantum weight} is 
$$H^3vH^3vH^2vH^3vH^3vH^2vH^3vH^2v\,.$$

The word in $A^*$ corresponding to the bottom compatible pair in \autoref{fig: quantum weight} is  
$$h^3VhHhvh^2Vh^3Vh^3VHhvh^3Vh^2V\,.$$
\end{exmp}

We define a morphism $f: \Z A^* \to \Z$, where $\Z A^*$ is the group of formal $\Z$-sums of words in $A^*$.  The function $w_q$ is defined on words of length $2$ in $A^*$ as follows:
$$w_q(hv) = w_q(Hv) = w_q(hV) = 1\,, \;\;\; w_q(Hh) = w_q(vV) = r\,, \;\;\; w_q(VH) = r^2 - 1\,,$$
and for $x,y \in A$, $w_q(xy) = - w_q(yx)$.  Note that in particular, this implies that\\ ${w_q(hh) = w_q(HH) = w_q(vv) = w_q(VV) = 0}$.
The function $w_q$ naturally extends to formal $\Z$-sums of words of length $2$ on $A$.  It is then extended to words of larger length by taking the formal sum over all length $2$ (not necessarily contiguous) subwords with multiplicity, and again extended naturally to formal $\Z$-sums of any words on $A$.  We sometimes apply $w_q$ to a compatible pair; in this case, we interpret $w_q$ as being applied to the corresponding word on $A$.  We refer to $w_q$ as the \emph{quantum weight} of a word or compatible pair.  Computing the value of $w_q$ on the word associated to a compatible pair in this way is essentially calculating Rupel's weighting on compatible pairs associated to quantum cluster variables \cite{Rup2}.

\begin{exmp}\label{exmp: quantum pair calculation}
Let $t_1$ (resp., $t_2$) denote the word in $A^*$ corresponding to the top (resp., bottom) compatible pair in \autoref{fig: quantum weight}, computed in \autoref{exmp: compatible word}.  Note that here we have $r = 3$.  Looking at all the length-$2$ subwords of $t_1$, we can see that $t_1$ has $98$ instances of the subword $Hv$ and $70$ instances of the subword $vH$.  The only other length-$2$ subwords of $t_1$ are $HH$ and $vv$, and we have $w_q(HH) = w_q(vv) = 0$.  Thus we can compute
$$w_q(t_1) = 98 w_q(Hv) + 70 w_q(vH) = 98\cdot 1 + 70 \cdot (-1) = 28\,.$$
Via a similar computation, we have
\begin{align*}
    w_q(t_2) &= 69 w_q(hV) + 45 w_q(Vh)+ 21w_q(Hh) + 17w_q(hH)  + 7w_q(vV) + 5w_q(Vv) \\&\;\;\; + 5w_q(VH) + 7w_q(HV)  + 19w_q(hv) + 19(vh) + 3w_q(Hv) + w_q(vH)\\
    &= 69 - 45 + 21\cdot 3 - 17 \cdot 3 + 7 \cdot 3 - 5 \cdot 3  + 5\cdot 8 - 7 \cdot 8 + 19 - 19 + 3 - 1\\
    &= 28
\end{align*}
As we will show, there are more efficient methods for computing these weights than looking at all length-$2$ subwords.
\end{exmp}

For a word $u \in A^*$ and a letter $x \in A$, let $(u)_x$ denote the number of instances of $x$ in $u$.  In order to find a compact formula for the weights corresponding to collections of subpaths

\begin{lem}\label{lem: quantum empty path}
Let $\beta_\emptyset \in \calF'(\calD_n)$ be the empty collection of colored Dyck subpaths on $\calD_n$.  For all $n \geq 3$, we have $$w_q\left(\Phi_n(\beta_\emptyset)\right) = c_{n-1}+c_{n-2}-1\,.$$
\end{lem}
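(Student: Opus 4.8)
The plan is to compute $w_q$ directly on the word associated to the compatible pair $\Phi_n(\beta_\emptyset)$, using the recursive structure of $\calC_n$. First I would identify this word explicitly: since $\beta_\emptyset$ contains no subpaths, \autoref{defn: Phi} gives $\Phi_1(\beta_\emptyset) = \{\eta_1,\dots,\eta_{c_{n-1}}\}$, i.e.\ all horizontal edges, and $\Phi_2(\beta_\emptyset) = \emptyset$. Hence the associated word $u_n \in A^*$ is obtained from the Christoffel word for $\calC_n$ by replacing every $E$ with the capital letter $H$ and every $N$ with the lowercase letter $v$ (all horizontal edges lie in $S_1$, no vertical edge lies in $S_2$). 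Concretely, $(u_n)_H = c_{n-1}$, $(u_n)_v = c_{n-2}$, and $u_n$ contains no occurrence of $h$ or $V$.

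Next I would evaluate $w_q(u_n)$ by summing over all length-$2$ subwords. Since only the letters $H$ and $v$ appear, the only subword types contributing are $Hv$, $vH$, $HH$, and $vv$; of these $w_q(HH) = w_q(vv) = 0$ and $w_q(Hv) = 1$, $w_q(vH) = -1$. So $w_q(u_n)$ equals the number of pairs $(i,j)$ with $i<j$, the $i$-th letter $H$ and the $j$-th letter $v$, minus the number of pairs with the $i$-th letter $v$ and the $j$-th letter $H$. Writing the $H$'s and $v$'s in the order they occur, this difference telescopes: for each $v$ in position $p$ of $u_n$, its net contribution is $(\text{number of }H\text{'s before it}) - (\text{number of }H\text{'s after it})$. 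Summing over all $c_{n-2}$ occurrences of $v$ and using $\sum_{\text{each }v}(\#H\text{ before} - \#H\text{ after})$, a clean way to package this is: $w_q(u_n) = \sum_{\ell=1}^{c_{n-2}} \big(2k_\ell - c_{n-1}\big)$ where $k_\ell$ is the number of $H$'s preceding the $\ell$-th $v$. Equivalently $w_q(u_n) = 2\sum_\ell k_\ell - c_{n-1}c_{n-2}$.

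To pin down $\sum_\ell k_\ell$ I would use the Christoffel-word structure recalled in \autoref{sec: prelims} — in particular \autoref{rem: C to D} expressing the word for $\calC_n$ via the morphism $\theta = \{N \mapsto EN\}$ applied to the word for $\calD_n$, or \autoref{lem: corner structure}. A cleaner route is induction on $n$ using \autoref{lem: vertex Dn recursion}: $\calC_n$ consists of $r-1$ copies of $\calC_{n-1}$ followed by a copy of $\calC_{n-1}$ with a $\calC_{n-2}$-prefix removed. Translating to words, $u_n$ is a concatenation of shifted copies of $u_{n-1}$ (and a truncation of one), and $w_q$ of a concatenation $u \cdot u'$ equals $w_q(u) + w_q(u') + (u)_H (u')_v - (u)_v (u')_H$ (the cross terms from pairs straddling the junction). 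Feeding in the inductive hypotheses $w_q(u_{n-1}) = c_{n-2}+c_{n-3}-1$ and $w_q(u_{n-2}) = c_{n-3}+c_{n-4}-1$, together with the recursion $c_n = rc_{n-1}-c_{n-2}$ and the relation $c_{n-1}c_{n-2} - c_nc_{n-3} = \text{const}$ (a standard consequence of \autoref{eqn: cn recurrence}, provable by a one-line induction), the desired value $c_{n-1}+c_{n-2}-1$ should drop out after simplification. The base cases $n=3,4$ are handled directly from \autoref{obs: small paths}: for $n=3$ the word is $H$ with $w_q = 0 = c_2 + c_1 - 1$, and for $n=4$ the word is $H^r v$ with $w_q = r = c_3 + c_2 - 1$.

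\textbf{Main obstacle.} The delicate point is bookkeeping the cross terms $(u)_H(u')_v - (u)_v(u')_H$ across all the junctions in the $(r-1)$-fold concatenation plus the truncated block, and reconciling the resulting expression with $c_{n-1}+c_{n-2}-1$. This reduces to verifying an identity among the $c_n$'s — most likely $c_{n-1}c_{n-2} - c_n c_{n-3} = c_2 c_1 - c_3 c_0$ or a close variant — which is routine but needs to be stated and checked carefully so the telescoping in the induction goes through cleanly. If the straddling-pairs algebra proves cumbersome, the fallback is the direct count $w_q(u_n) = 2\sum_\ell k_\ell - c_{n-1}c_{n-2}$ combined with an explicit evaluation of $\sum_\ell k_\ell$ from the Christoffel/Stern–Brocot description of where the north steps of $\calC_n$ sit relative to the east steps, which is where I expect to spend the most effort.
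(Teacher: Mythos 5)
Your proposal is correct, but it takes a genuinely different route from the paper. The paper also inducts on $n$, but its mechanism is substitution rather than concatenation: it passes from $u_n$ to $u_{n+1}$ via the morphism $\{H \mapsto H^r v,\ v\mapsto H^{r-1}v\}$ applied to $\psi^{-1}(u_n)$ with $\psi = \{H\mapsto Hv\}$, and tracks how $w_q$ changes under a morphism by evaluating it on images of length-$2$ words and counting letters. You instead use the block decomposition of \autoref{lem: vertex Dn recursion} (transported from $\calD_n$ to $\calC_n$ via \autoref{rem: C to D}, which is legitimate since $\theta$ is a morphism and the removed prefix sits at a word boundary) together with the bilinear cross-term formula $w_q(u\cdot u') = w_q(u)+w_q(u')+(u)_H(u')_v-(u)_v(u')_H$. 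Your flagged obstacle does resolve, and more easily than you fear: cross terms between the $r-1$ identical copies of $u_{n-1}$ vanish identically, each copy contributes $c_{n-3}^2-c_{n-2}c_{n-4}$ against the truncated block, and the truncated block's own weight is recovered from $u_{n-1}=u_{n-2}\cdot s_n$; after substituting $c_{n-2}=rc_{n-3}-c_{n-4}$ the induction closes exactly when $c_{n-3}^2-c_{n-2}c_{n-4}=1$, which is the standard invariant of the recurrence \autoref{eqn: cn recurrence} (your guessed identity, in its correct form; note $c_0$ is not defined in the paper, so state it this way) and is proved by a one-line induction from $c_2^2-c_3c_1=1$. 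Treating $n=3,4$ as base cases, as you do, is also the right move, since the block decomposition needs $n\geq 5$ for $c_{n-4}$ to be defined; incidentally, your base cases match the statement as written, whereas the paper's displayed base case is really the $n=4$ computation. What each approach buys: yours is more elementary (inversion counting plus a determinant-type identity) and makes the cross-term structure transparent, while the paper's morphism argument avoids the extra identity and fits the Christoffel-word machinery ($\lambda$, $\theta$) already developed in \autoref{sec: prelims}.
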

\begin{proof}
We proceed by induction on $n$.  For the base case $n = 3$, we have 
$$w_q\left(\Phi_3(\beta_\emptyset)\right) = w_q(H^rv) = rw_q(Hv) + \binom{r}{2}w_q(HH) = r = c_3 + c_2 - 1\,.$$

We now proceed to the inductive step.  Let $\psi$ be the morphism $\{H \mapsto Hv\}$.  Observe that for a word $u \in \{H,v\}^*$ that starts with $H$, ends with $v$, and has no consecutive instances of $v$, we have
$$w_q(\psi(u)) = w_q(u) + (u)_H\,.$$ 
Let $u_j$ be the word associated to $\Phi_j(\beta_\emptyset)$. Then by \autoref{rem: C to D} and \autoref{lem: corner map}, $u_{n+1}$ can be obtained by applying the morphism $\chi = \{H \mapsto H^rv, v \mapsto H^{r-1}v\}$ to $\psi^{-1}(u_n)$.  We can readily calculate
\begin{align*}
    &w_q(\chi(HH)) = w_q(H^rvH^rv) = 2r = w_q(HH)+ 2w_q(\chi(H))\,,\\
    &w_q(\chi(vv)) = w_q(H^{r-1}vH^rv) = 2r-2 = w_q(vv) + 2w_q(\chi(v))\,,\\
    &w_q(\chi(Hv)) = w_q(H^rvH^{r-1}v) = 2r = w_q(Hv) + w_q(H^rv) + w_q(H^{r-1}v)\,, \\
    &w_q(\chi(vH)) = w_q(H^{r-1}vH^{r}v) = 2r-2 = w_q(vH) + w_q(H^{r-1}v) + w_q(H^rv)\,. 
\end{align*}
Moreover, $\psi^{-1}(u_n)$ has $c_{n-1}-c_{n-2}$ instances of $H$ and $c_{n-2}$ instances of $v$.
Therefore, we have
\begin{align*}
    w_q(\Phi_{n+1}(\beta_\emptyset) &= w_q(\chi(\psi^{-1}(u_n)\\
    &= w_q(\psi^{-1}(u_n)) + (c_{n-1}-c_{n-2})w_q(\chi(H)) + c_{n-2}w_q(\chi(v))\\
    &= \left(\psi(u_n) - c_{n-2}\right) + r(c_{n-1}-c_{n-2}) + (r-1)c_{n-2}\\
    &= \left(c_{n-1}+c_{n-2} - 1 - c_{n-2}\right) + c_n\\
    &= c_n + c_{n-1} + 1\,.\qedhere
\end{align*}
\end{proof}

Having calculated the weight of the empty collection of paths in $\calF'(\calD_n)$, we wish to calculate the quantum weight when we add in colored subpaths.  Note that for the word corresponding to the compatible pair, this involves swapping out certain instances of $H$ for $h$ and $v$ for $V$.  We now calculate how such a substitution affects the quantum weight of the word.  

\begin{lem}\label{lem: quantum coord formula}
Let $t_1,u_1,t_2,u_2,\dots,t_s,u_s$ be words corresponding to compatible pairs.  Furthermore, suppose that $t_i,u_i \in \{H,v\}$.  Let $\sigma$ be the morphism $\{H \mapsto h, v \mapsto V\}$.  Then we have
$$w_q\left(\prod_{i=1}^s t_i \sigma(u_i)\right)  = w_q\left(\prod_{i=1}^s t_i u_i\right) + \sum_{i=1}^s\sum_{j=1}^s (-1)^{\mathbbm{1}_{i < j}} \bigg(r(t_j)_H\cdot(u_i)_h + \big(r(t_j)_v - r^2(t_j)_H\big)\cdot(u_i)_V\bigg)$$
\end{lem}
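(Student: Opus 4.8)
The plan is to reduce the identity to a purely bilinear bookkeeping computation on the length-$2$ subwords of the concatenation $W := \prod_{i=1}^s t_i\sigma(u_i)$, using that $w_q$ is, by construction, the formal $\Z$-linear extension of a function on length-$2$ subwords counted with multiplicity. Writing $T := \prod_{i=1}^s t_i u_i$ and observing that both $W$ and $T$ have the same underlying sequence of \emph{positions}, I would compare the contributions subword by subword according to whether each of the two positions lies inside some $t_i$-block or inside some $\sigma(u_i)$-block. Concretely, let $B^t_i$ denote the block of positions occupied by $t_i$ and $B^u_i$ the block occupied by $\sigma(u_i)$ (equivalently $u_i$, in $T$). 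A length-$2$ subword of $W$ then falls into one of three types: both positions in $\bigcup_i B^t_i$; both positions in $\bigcup_i B^u_i$; or one position in a $t$-block and one in a $u$-block.

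First I would dispose of the first type: if both positions of a subword lie in $t$-blocks, the corresponding letters are identical in $W$ and $T$ (the $\sigma$ substitution touches only the $u$-blocks), so these contributions cancel between $w_q(W)$ and $w_q(T)$. The second type requires a small lemma: within a single $u$-block, the substitution $\sigma = \{H\mapsto h, v\mapsto V\}$ replaces $u_i$ by $\sigma(u_i)$; since by hypothesis each $u_i$ (a word corresponding to a compatible pair, with letters in $\{H,v\}$) has the property that $w_q$ is invariant under $\sigma$ on it — this is where I expect to invoke that $w_q(hv)=w_q(Hv)=1=w_q(hV)$ together with $w_q(HH)=w_q(hh)=w_q(vv)=w_q(VV)=0$, so every length-$2$ subword internal to a single block is unchanged in value. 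For a subword spanning two distinct $u$-blocks $B^u_i$ and $B^u_j$ with $i<j$, both letters get capitalized-to-lowercase or $v$-to-$V$ converted, and here I would check directly using the values $w_q(hH), w_q(hV), w_q(vH), w_q(vV)$ versus $w_q(HH), w_q(Hv), w_q(vH), w_q(vv)$ that the net change vanishes as well (again because the differences $w_q(hx)-w_q(Hx)$ and $w_q(xV)-w_q(xv)$ are zero for the relevant $x$). So all of the genuine work lives in the third, mixed type.

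For the mixed type, I would fix a $t$-block $B^t_j$ and a $u$-block $B^u_i$. The number of length-$2$ subwords with one position in each is $|B^t_j|\cdot|B^u_i|$, and the sign convention $w_q(xy) = -w_q(yx)$ means a $t$-block to the left of the $u$-block contributes with one sign and to the right with the opposite, which is exactly the $(-1)^{\mathbbm 1_{i<j}}$ bookkeeping in the claimed formula (note $i<j$ in the block ordering corresponds to $B^u_i$ lying to the left of $B^t_j$ in $W$, so one must be careful to match the stated exponent — I would verify the orientation on a two-block example before committing to signs). For a fixed such ordered pair, the contribution to $w_q(W) - w_q(T)$ is
$$\sum_{x\in t_j}\sum_{y\in u_i}\big(w_q(x\,\sigma(y)) - w_q(x\,y)\big)$$
up to the global sign, where $x$ ranges over the letters of $t_j$ (in $\{H,v\}$) and $y$ over the letters of $u_i$ (in $\{H,v\}$), and $\sigma(y)\in\{h,V\}$. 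Now I evaluate the four atomic differences from the defining table: $w_q(Hh)-w_q(HH) = r - 0 = r$; $w_q(HV)-w_q(Hv) = (-(r^2-1)) - 1 = -r^2$; wait — I would recompute carefully, since $w_q(VH) = r^2-1$ forces $w_q(HV) = 1-r^2$, and $w_q(Hv)=1$, giving difference $-r^2$; $w_q(vh)-w_q(vH) = (-1)-(-1) = 0$; $w_q(vV)-w_q(vv) = r - 0 = r$. Collecting: each letter $x=H$ in $t_j$ paired with a letter $y=H$ in $u_i$ gives $r$, paired with $y=v$ gives $-r^2 + (\text{the }0\text{ from }v)$... — at this point the grouping by $(t_j)_H, (t_j)_v, (u_i)_h, (u_i)_V$ produces exactly the coefficient $r(t_j)_H\cdot(u_i)_h + \big(r(t_j)_v - r^2(t_j)_H\big)\cdot(u_i)_V$ appearing in the statement, once one recalls that $(u_i)_h$ counts the $H$'s of $u_i$ (which become $h$) and $(u_i)_V$ counts the $v$'s of $u_i$ (which become $V$).

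\textbf{Main obstacle.} The genuinely delicate part is \emph{not} the algebra of the four atomic differences but the sign and orientation bookkeeping: matching ``$B^u_i$ left of $B^t_j$'' with the exponent $\mathbbm 1_{i<j}$ as written, and confirming that the internal-to-a-block and between-two-$u$-blocks contributions truly cancel (this cancellation is the formal content of ``$u_i$ corresponds to a compatible pair'' being used only through $\sigma$-invariance of $w_q$ on $\{H,v\}^*$, which should be isolated as a one-line sublemma). I would organize the proof as: (1) a sublemma that $w_q\circ\sigma = w_q$ on all of $\{H,v\}^*$ regarded via the length-$2$ extension — checked by the vanishing of $w_q(hx)-w_q(Hx)$ and $w_q(xV)-w_q(xv)$ for $x\in\{h,H,v,V\}$; (2) decomposition of subwords of $W$ into the three types and cancellation of types one and two using (1) plus the skew-symmetry; (3) the mixed-type tally reducing to the displayed double sum; (4) evaluation of the four atomic differences and regrouping into the stated closed form. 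Everything after step (1) is routine once the sign convention is pinned down.
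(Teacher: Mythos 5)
Your proposal is correct and follows essentially the same route as the paper: reduce to length-$2$ subwords, observe that $w_q$ is unchanged by $\sigma$ on the $\{H,v\}$-letters not paired with a $t$-block letter, and evaluate the four atomic differences $w_q(Hh)-w_q(HH)=r$, $w_q(HV)-w_q(Hv)=-r^2$, $w_q(vh)-w_q(vH)=0$, $w_q(vV)-w_q(vv)=r$ with the $(-1)^{\mathbbm{1}_{i<j}}$ sign coming from skew-symmetry, which regroups into the stated bilinear form exactly as in the paper. Your treatment is in fact slightly more careful than the paper's about subwords spanning two distinct $u$-blocks (the paper only invokes $w_q(hV)=w_q(Hv)$ for a single block); just note that the cancellation there should be justified by the direct four-case check $w_q(\sigma(x)\sigma(y))=w_q(xy)$ for $x,y\in\{H,v\}$ rather than by the stepwise differences $w_q(hx)-w_q(Hx)$, which do not individually vanish.
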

\begin{proof}
Since $w_q(hV) = w_q(Hv)$, we have $w_q(u_i) = w_q\left(\sigma(u_i)\right)$ for all $i$.  Hence we need only to calculate the change its value under $w_q$ after applying $\sigma$ to the length-$2$ subwords with one letter from a $u_i$ and the other from a $t_j$.

Let $U_i$ denote the value under $w_q$ of the sum over all length-$2$ subwords of $\prod_{i=1}^s t_i \sigma(u_i)$ with one letter in $\sigma(u_i)$ and the other from some $t_j$.   We then have
\begin{align*}
    U_i &= \sum_{j=1}^s (-1)^{\mathbbm{1}_{i < j}} \bigg((t_j)_H\cdot(u_i)_h\cdot\left(w_q(Hh) - w_q(HH)\right) + (t_j)_H\cdot(u_i)_V\cdot\left(w_q(HV) - w_q(Hv)\right)\\
    &\quad\quad\quad\quad + (t_j)_v\cdot(u_i)_h\cdot\left(w_q(vh) - w_q(vH)\right) +   (t_j)_v\cdot(u_i)_V\cdot\left(w_q(vV) - w_q(vv)\right)\bigg)\\
    &= \sum_{j=1}^s (-1)^{\mathbbm{1}_{i < j}} \bigg(r\big((t_j)_H\cdot(u_i)_h + (t_j)_v\cdot(u_i)_V\big) - r^2(t_j)_H\cdot(u_i)_V\bigg)\,.\qedhere
\end{align*}
\end{proof}

Applying the previous general result about compatible pairs to the specific case of $\calC_n$ and using the connection between $\calF'(\calD_n)$ and compatible pairs on $\calC_n$, we can now prove the quantum cluster variable expansion formula.

\begin{proof}[Proof of \autoref{thm: quantum expansion}]
Adding a path to $\beta \in \calF'(\calD_n)$ corresponds to applying the morphism $\sigma$ from \autoref{lem: quantum coord formula} to the appropriate portion of associated compatible word.
Note that $|\beta_i|_2 = (\beta_i)_h$ and $|\beta_i|_1 = (\beta_i)_V$.  Similarly, $|\overline{\beta_j}|_2 = (\overline{\beta_j})_H$ and $|\overline{\beta_j}|_1 = (\overline{\beta_j})_v$.  It follows from \autoref{lem: quantum coord formula}, \autoref{lem: quantum empty path}, and the definition of $w_q$ for words in $A^*$ that $w_q(\beta) = \gamma_\omega + \beta_\omega$, where the terms on the right hand side are those in \cite[Corollary 5.4]{Rup2}.  Thus expansion formula can be deduced directly from \cite[Corollary 5.4]{Rup2}.  
\end{proof}

\begin{exmp}
Let $\beta = \{\beta_1,\dots,\beta_6\} \in \calF'(\calD_6)$ be the collection of colored Dyck subpaths shown in \autoref{fig: Phi map}.  Then we have 
$\overline{\beta_0} = \overline{\beta_1} = \overline{\beta_6} = \emptyset$, $\overline{\beta_2} = \{\alpha_4\}$, $\overline{\beta_3} = \{v_2\}$, $\overline{\beta_4} = \{\alpha_{14}\}$, and $\overline{\beta_5} = \{v_6\}$. Applying \autoref{thm: quantum expansion}, we have
\begin{align*}
    w_q(\beta) = (c_5+c_4 - 1) + 3(15-4)-9(5-1)+3(5-1)+3(6-13)-9(2-4) + 3(2-4) = 28\,.
\end{align*}
which confirms the second calculation in \autoref{exmp: quantum pair calculation}.
\end{exmp}

\section{Further Directions}\label{sec: further directions}
Many of our methods rely on the highly structured nature of the maximal Dyck paths $\calC_n$ and $\calD_n$.  We use this to better understand the conditions for a set of edges to form a compatible pair on $\calC_n$, in particular deriving a criterion for compatibility in terms of the sequences of consecutive vertical edges.  While $\calC_n$ is the relevant choice of maximal Dyck path for the cluster variables, compatible pairs over arbitrary maximal Dyck paths were studied by Lee, Li, and Zelevinsky \cite{LLZ} in their work on the greedy basis. One way to study compatibility is in terms of forbidden edge sets, i.e., the minimal subsets of edges which violate the compatibility condition for compatible pairs.  It is easy to verify that for the staircase Dyck path $\calP(a,a)$, a set of edges is compatible if and only if it does not contain a horizontal edge and the vertical edge immediately following it.  From the proof of the bijectivity of $\Phi$, it follows that on $\calC_n$ the forbidden edge sets are the images under $\Phi$ of an $(m,w)$-brown path and the $c_{m-1}-wc_{m-2}$ edges preceding it. It could be interesting to study whether the criterion for compatibility can also be reduced for other families of maximal Dyck paths.  

While positivity fails in general for the quantum greedy basis \cite[Section 3]{LLRZ}, it would be interesting to know under what conditions positivity holds.  Rupel's formula \cite[Corollary 5.4]{Rup2} establishes the positivity property for the quantum cluster variables, but perhaps this is a special case of a more general phenomenon.  If so, these elements may also admit a quantum weighting of the associated compatible pairs, similar to that given by Rupel.

\section{Acknowledgements}
The author thanks her advisor, Lauren Williams, for introducing her to the various interesting expansion formulas for low-rank cluster algebras and for discussions at many stages of this research.  The author is also grateful to Kyungyong Lee, Feiyang Lin, Gregg Musiker, and Ralf Schiffler for their helpful correspondence and comments about this work.  She additionally thanks Kyungyong Lee for suggesting the generalization of \autoref{cor: LS modified expansion} to the case of cluster algebras with coefficients.  A portion of this work was completed while the author was visiting l'Institut de Recherche en Informatique Fondamentale, and the author extends her gratitude to Sylvie Corteel for hosting her.  The author was supported by NSF Graduate Research Fellowship and a Jack Kent Cooke Foundation Graduate Fellowship.

\end{document}